\numberwithin{equation}{section}
\numberwithin{equation}{section}
\newtheorem{introtheorem}{Theorem}
\newtheorem{introcorollary}[introtheorem]{Corollary}
\newtheorem{theorem}{Theorem}[section]
\newtheorem{lemma}[theorem]{Lemma}
\newtheorem{proposition}[theorem]{Proposition}
\newtheorem*{theorem*}{Theorem}
\theoremstyle{definition}
\newtheorem{definition}[theorem]{Definition}
\newtheorem*{definition*}{Definition}
\theoremstyle{remark}
\newtheorem{example}[theorem]{Example}
\theoremstyle{remark}
\newcommand{\Cs}{\ensuremath{\mathrm{C}^\ast}} 
\begin{document}

\title{Selfless reduced free products and graph products of \Cs-algebras}

\author{Felipe Flores}
\author{Mario Klisse}
\author{ M\'iche\'al \'O Cobhthaigh}
\author{Matteo Pagliero}

\address{Department of Mathematics, University of Virginia, Kerchof Hall. 141 Cabell Dr, Charlottesville, Virginia, United States}

\email{hmy3tf@virginia.edu}

\email{des5sf@virginia.edu}

\address{Department of Mathematics, Christian-Albrechts University Kiel,\\ Heinrich-Hecht-Platz 6,
Kiel, Germany}

\email{klisse@math.uni-kiel.de}

\address{Department of Mathematics and Computer Science, University of Southern Denmark, Campusvej 55,
Odense, Denmark}

\email{pagliero@imada.sdu.dk}

\date{\today. \\ 2020 \emph{Mathematics Subject Classification.} 20E06, 20F55, 46L05, 46L09, 46M07.}

\begin{abstract}
Under mild assumptions, we show that reduced free products and reduced graph products of \Cs-algebras are completely selfless, without assuming the rapid decay property. In particular, our main theorems yield numerous new examples of simple, monotracial \Cs-algebras with strict comparison, stable rank one, and admitting a unique unital embedding of the Jiang--Su algebra $\mathcal{Z}$ up to approximate unitary equivalence, and of purely infinite \Cs-algebras in the traceless case. 
\end{abstract}

\maketitle

\section*{Introduction}

The Murray--von Neumann comparison of projections is fundamental to the theory of von Neumann algebras and the type classification of factors.
For general \Cs-algebras, the scarcity of projections (or lack thereof) requires an appropriate new version of the Murray--von Neumann comparison of projections. 
In \cite{Cuntz78}, Cuntz proposed to define a comparison relation for all positive elements with the intent of exploring dimension functions on simple \Cs-algebras.
For positive elements $a,b$ in a \Cs-algebra $A$, one says that $a$ is \emph{Cuntz subequivalent} to $b$ if there exists a sequence $(x_n)_{n\in \mathbb{N}} \subseteq  A$ such that $x_nbx_n^*\rightarrow a$.
This comparison is encoded by the partial order on what is nowadays known as the \emph{Cuntz semigroup} $\mathrm{Cu}(A)$ of $A$, a partially ordered Abelian monoid constructed out of the positive elements of the stabilization of $A$, i.e., $A\otimes\mathcal{K}$, and the Cuntz relation described above.
The object $\mathrm{Cu}(A)$ plays an important role in the classification of separable, simple, nuclear \Cs-algebras, known as the Elliott classification program (\cite{CarrionGabeSchafhauserTikuisisWhite23,GongLinNiu20,GongLinNiu20b,ElliottGongLinNiu15,ElliottGongLinNiu20,GongLin20,GongLin22,Schafhauser20,TikuisisWhiteWinter} to name a few of the latest contributions). Indeed, the Cuntz semigroup provides a finer invariant used by Toms to establish that $K$-theory and traces (and the pairing between them) alone are insufficient to classify all simple, nuclear, separable \Cs-algebras \cite{Toms}.
When the traces control this partial order in a suitable sense,  one says that the \Cs-algebra has \emph{strict comparison of positive elements} (or \emph{strict comparison} for short).
Consider for a moment a simple, unital, monotracial and exact \Cs-algebra $(A,\tau)$, and denote the canonical extension of $\tau$ to $A\otimes\mathcal{K}$ by $\tau$ as well.
If $d_{\tau}(b) := \lim_{n\to\infty}\tau(b^{1/n})$ for every $b\in(A\otimes\mathcal{K})_+$, then whenever $a$ is another positive element that is Cuntz subequivalent to $b$, one has that $d_{\tau}(a)\leq d_{\tau}(b)$.
Using the same notation as before, it is well-known that $(A,\tau)$ has strict comparison precisely when a partial converse holds: $d_{\tau}(a) < d_{\tau}(b)$ implies that $a$ is Cuntz subequivalent to $b$, for all $a,b\in(A\otimes\mathcal{K})_+$.
This property was first defined by Blackadar in \cite{Blackadar88}, and provides the best analogue of the Murray--von Neumann comparison of projections for factors.
Strict comparison is a key property for \Cs-algebras, and has deep connections with other regularity conditions;
most importantly with tensorial absorption of the Jiang--Su algebra $\mathcal Z$ \cite{JiangSu}.
A \Cs-algebra $A$ is said to be \emph{$\mathcal{Z}$-stable} if $A\cong A\otimes\mathcal{Z}$.
By the main result of \cite{Rordam04}, $\mathcal{Z}$-stable \Cs-algebras have strict comparison. 
It is worth emphasizing that $\mathcal{Z}$-stability is central to classify stably finite \Cs-algebras in the scope of the Elliott programme.

Consequently, it is desirable that this property is satisfied by many \Cs-algebras.
In particular, it is an open problem, and the missing piece to the famous Toms--Winter conjecture \cite{ElliottToms08,Winter10,Winter12,WinterZacharias10}, whether all separable, simple, nuclear, non-elementary \Cs-algebras  with strict comparison are $\mathcal{Z}$-stable.
However, the groundbreaking work of Matui and Sato \cite{MatuiSato12} established this under the additional assumptions of unitality and finitely many extremal traces, thereby granting access to classification via strict comparison.

Strict comparison is also deeply connected with stable rank one in the sense of Rieffel \cite{Rieffel} (see  \cite{Lin25}) and to several relevant problems, such as the classification of embeddings of the Jiang-Su algebra $\mathcal Z$ up to approximate unitary equivalence \cite{Robert12}, and the recent (negative) solution to the \Cs-algebraic Tarski problem \cite{ElayavalliSchafhauser25}. 

Nonetheless, finding criteria that ensure strict comparison for broader classes of (possibly non-nuclear) \Cs-algebras is not an easy task.
This difficulty is well illustrated by the fact that, until recently, it was an open problem whether the reduced group \Cs-algebra of the free group $\mathbb{F}_n$ has strict comparison when $2 \leq n < \infty$.
(The case of $\mathbb F_\infty$ was settled by Robert in \cite[Proposition 6.3.2]{Robert12}, where he attributes the result to R{\o}rdam.)
This problem has now been answered in the positive by Amrutam, Gao, Kunnawalkam Elayavalli, and Patchell in \cite{AmrutamGaoElayavalliPatchell24}, where they prove a much stronger property than strict comparison alone.
To be precise, they prove that finitely generated acylindrically hyperbolic groups with trivial finite radical and satisfying the rapid decay condition are \emph{\Cs-selfless}. In particular, their result implies that the corresponding reduced group \Cs-algebras are \emph{selfless} in the sense of Robert \cite{Robert25}, when considered as a \Cs-probability space equipped with the canonical trace.
Let us recall its definition and some consequences here.

\begin{definition*}[see {\cite[Definition 2.1]{Robert25}}]
Let $A$ be a unital \Cs-algebra equipped with a GNS-faithful state $\rho$. The pair $(A,\rho)$, referred to as a \emph{\Cs-probability space}, is said to be \emph{selfless} if $A\not\cong\mathbb{C}$ and the first factor embedding into the reduced free product \Cs-algebra\footnote{Here, and in the rest of the article, we denote the reduced free products of two unital \Cs-algebras $A$ and $B$ with respect to GNS-faithful states $\omega_A$ and $\omega_B$ by $(A,\omega_A)\star(B,\omega_B)$. The choice of notation is consistent with that of reduced graph products.} $\iota\colon (A,\rho) \hookrightarrow (A,\rho) \star (A,\rho)$ is \emph{existential}, i.e., there exists an ultrafilter $\mathcal{U}$ and an embedding $\theta\colon (A,\rho)\star (A,\rho) \hookrightarrow (A^{\mathcal{U}},\rho^{\mathcal{U}})$ such that the diagram
\[\begin{tikzcd}
	{(A,\rho)} && {(A^{\mathcal{U}},\rho^{\mathcal{U}})} \\
	& {(A,\rho)\star(A,\rho)}
	\arrow["\delta", hook, from=1-1, to=1-3]
	\arrow["{\iota}", hook, from=1-1, to=2-2]
	\arrow["\theta", hook, from=2-2, to=1-3]
\end{tikzcd}\]
commutes, where $\delta$ denotes the diagonal embedding.
\end{definition*}

When he introduced selfless \Cs-algebras, Robert was inspired by Dykema and R{\o}rdam's work on infinite reduced free products \cite{DykemaRordam} and their notion of eigenfree \Cs-probability spaces \cite{DykemaRordamB}.
As Robert observed, selfless \Cs-algebras (with a faithful trace) can be regarded as a \Cs-algebraic analogue of ${\rm II}_1$-factors.
Existential embeddings admit an equivalent model-theoretic formulation, although we take a more \Cs-algebraic viewpoint here.
Such embeddings are related to approximate splitting through the ultrapower (see \cite[Section 2]{GoldbringSinclair17}), and (for separable \Cs-algebras) are a special case of sequentially split $\ast$-homomorphisms in the sense of Barlak--Szabó (see \cite[Definition 2.1 and Theorem 4.19]{BarlakSzabo}).
In \cite[Theorem 2.9]{BarlakSzabo}, it is also established that the existence of a sequentially split $\ast$-homomorphisms between separable \Cs-algebras $A \to B$ forces that many structural properties of $B$, such as simplicity, stable rank one and strict comparison are passed to $A$, essentially because of their $\forall\exists$-axiomatizability.
For the same reason, if  a tracial \Cs-algebra $(A,\tau)$  is selfless, it must be simple, monotracial, and have strict comparison and stable rank one (see \cite[Theorem 3.1]{Robert25}).

Amrutam--Gao--Kunnawalkam Elayavalli--Patchell's work \cite{AmrutamGaoElayavalliPatchell24} has already inspired several generalisations and related results by other authors, which we now briefly recall. Selflessness has been established for reduced group \Cs-algebras associated with higher rank lattices \cite{Vigdorovich25}, free products of groups with subexponential decay \cite{ElayavalliPatchellTeryoshin25}, CAT(0) groups \cite{MaWangYang25}, and groups which satisfy a generalized Powers averaging property \cite{Ozawa25}. For twisted group \Cs-algebras of acylindrically hyperbolic groups with rapid decay, we refer the reader to \cite{RaumThielVilalta25} for results on strict comparison. When it comes to reduced free products of abstract \Cs-algebras, the state of the art result is the main theorem of \cite{HayesElayavalliRobert25} (and the follow-up work \cite{HayesElayavalliPatchellRobert25} about inclusions), which establishes that free products of \Cs-algebras with rapid decay and the \emph{Avitzour condition} are selfless. 
Originating in \cite{Avitzour82}, the Avitzour condition is the assumption that there exist unitaries $u,v_1,v_2$ as in Theorem \ref{MainTheorem2}.

In \cite{Ozawa25}, Ozawa introduced a strengthening of selflessness, termed \emph{complete selflessness}, which enjoys stability under spatial tensor products in the separable setting (see \cite[Theorem 11]{Ozawa25}).

\begin{definition*}[see {\cite[Section 6]{Ozawa25}}]
A \Cs-probability space\footnote{Recall that we assume the state $\rho$ is GNS-faithful.} $(A,\rho)$ is said to be \emph{completely selfless}
if $A\not\cong\mathbb{C}$ and the first factor embedding into the reduced free product \Cs-algebra $\iota\colon (A,\rho) \hookrightarrow (A,\rho) \star (A,\rho)$ is \emph{completely existential}, i.e., there exists an ultrafilter $\mathcal U$ and an embedding $\theta\colon(A,\rho)\star(A,\rho)\hookrightarrow(A^{\mathcal U},\rho^{\mathcal U})$ such that $\theta \circ \iota=\delta$ and, for every \Cs-algebra $C$, $\theta$ induces a $\ast$-homomorphism 
\[
((A,\rho)\star(A,\rho))\otimes C \longrightarrow ((A,\rho)\otimes C)^{\mathcal U} .
\]
\end{definition*}

Inspired by Ozawa's work on \Cs-probability spaces arising as reduced group \Cs-algebras \cite{Ozawa25}, we develop a new approach to obtain complete selflessness for certain natural classes of abstract \Cs-algebras.
The main result of the present work shows that reduced free products of \Cs-probability spaces are completely selfless if they satisfy the Avitzour condition.
We stress that we do not assume the rapid decay property, and thus obtain a full-fledged generalisation of Theorem B in \cite{HayesElayavalliRobert25}.
However, we remark that the rapid decay property remains relevant in the overarching picture.
In fact, in certain cases (see \cite[Theorem C]{HayesElayavalliRobert25}) it allows one to obtain selfless reduced free products without using the Avitzour condition.

In the following results, if  $\rho$ is a state on a \Cs-algebra $A$, we denote by $A^\rho$ its centralizer.

\begin{introtheorem}[{see Theorem \ref{Selflessness2}}]  \label{MainTheorem2}  Let $(A, \omega_{A})$ and $(B, \omega_{B})$ be unital \Cs-algebras equipped with GNS-faithful states $\omega_{A}$ and $\omega_{B}$, respectively. Assume that there exist unitaries 
\[
u \in \ker(\omega_{A}) \cap A^{\omega_{A}} \subseteq A
\quad \text{and} \quad
v_{1}, v_{2} \in \ker(\omega_{B}) \cap B^{\omega_{B}} \subseteq B
\]
such that $\omega_{B}(v_{1}^{\ast}v_{2}) = 0$. Then the free product \Cs-algebra $(A, \omega_{A}) \star (B, \omega_{B})$ is completely selfless. \end{introtheorem}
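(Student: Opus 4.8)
The plan is to unwind the definition of selflessness into a concrete asymptotic-freeness statement and then manufacture the required ultrapower data from the Avitzour unitaries. Write $C = (A,\omega_{A})\star(B,\omega_{B})$ and $\rho = \omega_{A}\star\omega_{B}$, and recall that $\rho$ is again GNS-faithful. Unravelling the definition, $(C,\rho)$ is selfless precisely when the first-factor embedding $\iota$ extends to an embedding $\theta$ of $(C,\rho)\star(C,\rho)$ into $(C^{\mathcal U},\rho^{\mathcal U})$ restricting to the diagonal $\delta$ on the first factor. By the universal property of the reduced free product, such a $\theta$ exists if and only if there is a state-preserving embedding $\psi\colon(C,\rho)\to(C^{\mathcal U},\rho^{\mathcal U})$ whose image is freely independent from $\delta(C)$ with respect to $\rho^{\mathcal U}$: then $\rho^{\mathcal U}$ restricts to the free product state on the algebra generated by $\delta(C)$ and $\psi(C)$, which is faithful, so $\theta$ is isometric. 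I would then record the elementary reduction that it suffices to produce a single Haar unitary $h\in(C^{\mathcal U},\rho^{\mathcal U})$ which is $\ast$-free from $\delta(C)$ and lies in the centralizer $(C^{\mathcal U})^{\rho^{\mathcal U}}$: for such an $h$, any alternating word $\delta(x_{1})\,h\delta(y_{1})h^{\ast}\,\delta(x_{2})\,h\delta(y_{2})h^{\ast}\cdots$ in centered elements becomes, after substitution, a fully alternating centered word in $\delta(C)$ and $\{h,h^{\ast}\}$, so freeness of $h$ from $\delta(C)$ forces its value to vanish; hence $\delta(C)$ and $h\,\delta(C)\,h^{\ast}$ are freely independent and $\psi:=\mathrm{Ad}(h)\circ\delta$ does the job.

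The second ingredient, ensuring that $\mathrm{Ad}(h)$ is state-preserving on the ultrapower, is that the conjugator can be taken in the centralizer. Since the modular automorphism group of a reduced free product state factorizes as $\sigma^{\rho}_{t}=\sigma^{\omega_{A}}_{t}\star\sigma^{\omega_{B}}_{t}$, the centralizers $A^{\omega_{A}}$ and $B^{\omega_{B}}$ lie in $C^{\rho}$, and since $C^{\rho}$ is a $\Cs$-subalgebra, every word in $u,v_{1},v_{2}$ lies in $C^{\rho}$. Consequently, representing $h=(w_{n})_{\mathcal U}$ by unitaries $w_{n}\in C^{\rho}$ automatically places $h$ in $(C^{\mathcal U})^{\rho^{\mathcal U}}$, so that $\psi$ is state-preserving. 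The hypotheses of Theorem~\ref{MainTheorem2} enter precisely here: the unitaries $g=uv_{1}$ and $k=uv_{2}$ are centered reduced words lying in $C^{\rho}$, and the assumption $\omega_{B}(v_{1}^{\ast}v_{2})=0$ is exactly what guarantees that $g^{-1}k=v_{1}^{\ast}v_{2}$ stays centered, so that $g$ and $k$ behave as free Haar unitaries.

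It remains to produce unitaries $w_{n}\in C^{\rho}$ such that $(w_{n})_{\mathcal U}$ is a Haar unitary $\ast$-free from $\delta(C)$; equivalently, for all centered $x_{1},\dots,x_{m}\in C$ and nonzero exponents the moments $\rho\big(w_{n}^{p_{0}}x_{1}w_{n}^{p_{1}}x_{2}\cdots x_{m}w_{n}^{p_{m}}\big)$ must tend to $0$ along $\mathcal U$. This is the heart of the matter and the step I expect to be the main obstacle. A first observation is that the $w_{n}$ cannot be confined to the fixed subalgebra generated by $g$ and $k$: no element of a subalgebra can be free from that subalgebra, whereas the target demands freeness from all of $C\supseteq\langle g,k\rangle$. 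The conjugators must therefore genuinely spread across $C$, and I would build them from a Powers--Avitzour averaging family — finite sets of words in $u,v_{1},v_{2}$ of growing cardinality — whose conjugation action annihilates every centered element of $C$ in an $L^{2}(\rho)$ (or weak) sense. The delicate point, and the place where the Avitzour condition is indispensable, is to upgrade this averaging into the asymptotic vanishing of the full mixed free moments above; this is the Ozawa-inspired mechanism of the present work. Crucially, I would carry out this step with an $L^{2}$/weak averaging estimate rather than an operator-norm one, which is what allows the argument to dispense with the rapid decay property required in \cite{HayesElayavalliRobert25}.

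Granting such a free Haar unitary $h$, the proof closes formally: $\psi=\mathrm{Ad}(h)\circ\delta$ is a state-preserving embedding with $\psi(C)$ freely independent from $\delta(C)$, so the universal property of the reduced free product yields the embedding $\theta\colon(C,\rho)\star(C,\rho)\hookrightarrow(C^{\mathcal U},\rho^{\mathcal U})$ with $\theta\circ\iota=\delta$. Hence $\iota$ is existential and $(C,\rho)$ is selfless. I expect essentially all of the difficulty to be concentrated in the averaging-to-freeness step; the reductions above are formal once the modular factorization and the universal property of the reduced free product are in hand.
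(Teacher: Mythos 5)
Your outer reduction is broadly in the right spirit, but the entire technical content of the theorem is deferred rather than proved, and the step you defer is precisely where your sketched mechanism diverges from one that works. Concretely: you reduce selflessness to producing a Haar unitary $h$ in the centralizer of $(C^{\mathcal U},\rho^{\mathcal U})$ that is $\ast$-free from $\delta(C)$, and then propose to obtain $h$ from a Powers--Avitzour averaging family of unitaries in $C^{\rho}$ with $L^{2}$/weak estimates. This is both more than is needed and harder than what the paper does. By Robert's criterion it suffices to embed $(C,\rho)\star(D,\sigma)$ into the ultrapower over the diagonal for \emph{some} nontrivial $(D,\sigma)$ --- no Haar unitary and no conjugated full copy of $C$ is required. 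The paper exploits this by building, for each finite $\mathcal F\subseteq W_\Gamma\setminus\{e\}$ and each $n$, a non-self-adjoint operator $T_{n,\mathcal F}\in\mathcal B(\mathcal H_\Gamma)$ (not in $C$, and not a unitary) of the form $\tfrac{1}{\sqrt{2n}}\sum_i\bigl(\mathbf{Q}(u_{\mathbf{g}_ist}\Omega)(u_{\mathbf{g}_ist}u_{\mathbf{g}_isr}^{\ast})+(u_{\mathbf{g}_isr}u_{\mathbf{g}_ist}^{\ast})\mathbf{Q}(u_{\mathbf{g}_ist}\Omega)^{\perp}\bigr)$, where the $u_{\mathbf{g}_i}$ are words in $u, v_1^\ast, v_2^\ast$ indexed by closed walks covering the complement of an auxiliary three-vertex graph. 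These satisfy the \emph{exact} identities $T_{n,\mathcal F}^{\ast}aT_{n,\mathcal F}=0$ for all reduced $a$ of type in $\mathcal F$ and $\|T_{n,\mathcal F}^{\ast}T_{n,\mathcal F}-1\|\le 1/n$, while $T_{n,\mathcal F}+T_{n,\mathcal F}^{\ast}$ lies in $C$. It is this exact algebraic cancellation, obtained by splitting unitaries with the projections $\mathbf{Q}(\,\cdot\,\Omega)$ and using creation/annihilation calculus, that replaces the norm estimates which previously required rapid decay. An $L^{2}$-averaging statement of the kind you propose would give vanishing of mixed moments but not, by itself, the operator-norm control needed to define a $\ast$-homomorphism out of a reduced free product into the ultrapower; converting $L^{2}$ estimates into norm estimates is exactly the step where rapid decay historically entered, so your sketch does not explain how that assumption is avoided.

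Two further points. First, your assertion that free independence of $\psi(C)$ and $\delta(C)$ with respect to $\rho^{\mathcal U}$ immediately yields an isometric $\theta$ from the \emph{reduced} free product requires the GNS representation of $\rho^{\mathcal U}$ restricted to $C^{\ast}(\delta(C)\cup\psi(C))$ to be faithful, which is not automatic for an ultrapower state; the paper sidesteps this by invoking simplicity of $(\mathbf{A}_\Gamma,\omega_\Gamma)\star(\mathcal T,\omega)$ to get injectivity of the induced map. Second, the only place your write-up engages the hypothesis $\omega_B(v_1^{\ast}v_2)=0$ is the remark that $g^{-1}k=v_1^{\ast}v_2$ ``stays centered''; in the actual argument this hypothesis is what allows the three unitaries $u_r=v_1^{\ast}$, $u_s=u$, $u_t=v_2^{\ast}$ to be treated as vertex unitaries of a graph product over $\mathbb Z_2\star(\mathbb Z_2\times\mathbb Z_2)$, which is what makes the walk combinatorics (Proposition \ref{FreeDifferentPaths} and Lemma \ref{FreeMultiplicationIdentities}) available. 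As it stands, the proposal identifies where the difficulty lies but does not resolve it, so it cannot be accepted as a proof.
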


Assume now that $(A,\omega_A)$ and $(B,\omega_B)$ are as in Theorem \ref{MainTheorem2}, and that $\omega_A$ and $\omega_B$ are traces.
Then it follows from Avitzour's work \cite{Avitzour82} that $(A, \omega_{A}) \star (B, \omega_{B})$ is simple and satisfies the Dixmier property.
Moreover, in \cite{DykemaHaagerupRordam} Dykema, Haagerup and R{\o}rdam showed that $(A, \omega_{A}) \star (B, \omega_{B})$ has stable rank one, which in particular established that property for $\Cs_r(\mathbb{F}_2)$.

Our Theorem \ref{MainTheorem2} combined with \cite[Theorem 3.1]{Robert25}, \cite[Proposition 6.3.1]{Robert12}, and the very recent result of Gould \cite[Theorem 2]{Gould26}, implies all of the above, and the following: 

\begin{introcorollary}
    Let $(A, \omega_{A})$ and $(B, \omega_{B})$ be unital \Cs-algebras equipped with GNS-faithful states $\omega_{A}$ and $\omega_{B}$ satisfying the assumptions of Theorem \ref{MainTheorem2}.
    Then  $(A,\omega_A) \star (B,\omega_B)$ has the uniform Dixmier property (and thus simple). Moreover, one has the following dichotomy:
    \begin{enumerate}[leftmargin=*,label=\textup{(\roman*)}]
        \item  $\omega_A \star \omega_B$ is tracial, in which case it is the unique 2-quasitracial state, and $(A,\omega_A) \star (B,\omega_B)$ has strict comparison and admits a unique embedding of the Jiang--Su algebra $\mathcal Z$ up to approximate unitary equivalence; or
        \item $\omega_A \star \omega_B$ is not tracial, in which case  $(A,\omega_A) \star (B,\omega_B)$ is purely infinite.
    \end{enumerate}
\end{introcorollary}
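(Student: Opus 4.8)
The plan is to obtain the whole statement as a structural consequence of selflessness, which Theorem \ref{MainTheorem2} supplies for the pair $(M,\rho)$, where $M := (A,\omega_A)\star(B,\omega_B)$ and $\rho := \omega_A\star\omega_B$. First I would record that $\rho$ is GNS-faithful, being the reduced free product of the GNS-faithful states $\omega_A$ and $\omega_B$, and that $M\neq\mathbb{C}$ (the unitary $u\in\ker(\omega_A)$ is non-scalar, since otherwise $\omega_A(u)\neq 0$, so $A\neq\mathbb{C}$); these are exactly the standing hypotheses under which Theorem \ref{MainTheorem2} applies, and it yields that $(M,\rho)$ is selfless.

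The bulk of the corollary is then read off from Robert's \cite[Theorem 3.1]{Robert25}, which collects the structural consequences of selflessness. Applied to $(M,\rho)$ it gives at once that $M$ is simple, has the uniform Dixmier property, enjoys strict comparison of positive elements, and carries at most one $2$-quasitracial state, which---if it exists---must be $\rho$. To split into the two alternatives I would invoke the standard Cuntz-semigroup dichotomy for simple \Cs-algebras: strict comparison is equivalent to almost unperforation of $\mathrm{Cu}(M)$ (\cite{Rordam04}), and a simple unital \Cs-algebra with almost unperforated Cuntz semigroup is either stably finite, hence admits a $2$-quasitrace, or purely infinite, hence admits none. This produces precisely cases (i) and (ii).

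In case (i) the unique $2$-quasitrace is $\rho$, so $M$ is monotracial with strict comparison; stable rank one then follows (either as part of \cite[Theorem 3.1]{Robert25} in the stably finite regime, or from strict comparison together with stable finiteness via \cite{Lin25}). With $M$ simple, unital, monotracial, of stable rank one and with strict comparison, Robert's \cite[Proposition 6.3.1]{Robert12} then delivers the unique unital embedding of $\mathcal{Z}$ up to approximate unitary equivalence. In case (ii), $M$ is simple, unital and traceless, so the same Cuntz-semigroup alternative forces pure infiniteness; here the extra assumption that $\rho$ be faithful is what is used to certify this conclusion within the free-product framework.

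I expect the difficulty of the corollary to be organizational rather than analytic, since all the hard work is hidden in Theorem \ref{MainTheorem2}. The three points that require care are: (a) maintaining the distinction between $2$-quasitraces and traces, as $M$ is not assumed exact and Haagerup's identification of quasitraces with traces is therefore unavailable, so ``monotracial'' must be read in the $2$-quasitracial sense; (b) checking that selflessness delivers the \emph{uniform} Dixmier property and not merely its non-quantitative version, which is where I would appeal most directly to the formulation of \cite[Theorem 3.1]{Robert25}; and (c) locating exactly where GNS-faithfulness of $\rho$ suffices and where genuine faithfulness is required---namely only in the purely infinite conclusion of case (ii).
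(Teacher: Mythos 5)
Your proposal follows exactly the route the paper takes: it deduces the corollary by feeding the selflessness of $(A,\omega_A)\star(B,\omega_B)$ from Theorem \ref{MainTheorem2} into Robert's \cite[Theorem 3.1]{Robert25} for the uniform Dixmier property, simplicity, and the monotracial/traceless dichotomy with strict comparison and pure infiniteness, and into \cite[Proposition 6.3.1]{Robert12} for the uniqueness of the $\mathcal{Z}$-embedding. Your additional care about $2$-quasitraces versus traces and about where faithfulness of $\omega_A\star\omega_B$ is needed is consistent with, and a correct elaboration of, the paper's one-line derivation.
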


We note that Hayes, Kunnawalkam Elayavalli, Patchell and Robert revisited our proof of Theorem \ref{MainTheorem2} in \cite[Theorem 3.4]{HayesElayavalliPatchellRobert25} to include selfless inclusions. 

Graph products of operator algebras have been introduced and studied by Mlotkowski \cite{Mlotkowski04}, Speicher--Wysocza\'{n}ski \cite{SpeicherWysoczanski16}, and Caspers--Fima \cite{CaspersFima17}. One may think of a reduced graph product of \Cs-algebras as a \Cs-algebra with an associated graph encoding the (non-)commutativity between its generating subalgebras. 
To be more precise, consider an undirected, simplicial  graph $\Gamma$ (i.e., edges have no direction, and there are no loops or multiple edges) with  set of vertices $V\Gamma$, and assume that each vertex $v\in V\Gamma$ is the label of a unital \Cs-algebra $A_v$, equipped with a GNS-faithful state $\omega_v$, and set $\mathbf{A}=(A_v)_{v\in\Gamma}$.
Then the \emph{reduced graph product \Cs-algebra} $(\mathbf{A}_{\Gamma},\omega_\Gamma)$ is a unital \Cs-algebra equipped with a GNS-faithful state that contains the vertex algebras $(A_v)_{v\in V\Gamma}$ in a canonical way, and such that two of these \Cs-subalgebras commute precisely when there is an edge between their labeling vertices (see Section \ref{GraphProductCAlgebras} for the precise definition).

Graph products unify both Voiculescu's free products (see \cite{Voiculescu85} and also \cite{Avitzour82}) and tensor products.
Indeed, if there are no edges in the graph $\Gamma$, the \Cs-algebras $(A_v)_{v\in V\Gamma}$ are in free position with respect to each other, and in fact $(\mathbf{A}_{\Gamma},\omega_\Gamma)=\star_{v\in V\Gamma} (A_v,\omega_v)$. On the other hand, if the graph is complete (i.e., every pair of vertices shares an edge), then $\mathbf{A}_{\Gamma}=\otimes_{v\in V\Gamma} A_v$. Moreover, $\mathbf{A}_\Gamma$ decomposes as a reduced free product over the connected component of $\Gamma$, and as a tensor product over the connected components of the complement graph $\Gamma^c$, obtained by taking as edge set the complement of the edge set of $\Gamma$ (see Section \ref{subsec:Graphs}).
In the second of our main results, we show that reduced graph products of \Cs-algebras are completely selfless provided that a suitable adaptation of the Avitzour condition is satisfied and that $\Gamma^c$  is connected.

\begin{introtheorem}[{see Theorem \ref{Selflessness1}}] \label{MainTheorem1} Let $\Gamma$ be an undirected, simplicial graph with at least three vertices,\footnote{Note that the graph with two vertices is covered by Theorem \ref{MainTheorem2}.} and let $\mathbf{A} = (A_{v})_{v \in V\Gamma}$ be a family of unital \Cs-algebras, each endowed with a GNS-faithful state $\omega_{v}$.  
Assume that the complement graph $\Gamma^{c}$ is connected and that every vertex $v \in V\Gamma$ admits a unitary 
$u_{v} \in A_{v}^{\omega_{v}} \cap \ker(\omega_{v})$.  
Then the reduced graph product \Cs-algebra $(\mathbf{A}_{\Gamma},\omega_\Gamma)$ is completely selfless. \end{introtheorem}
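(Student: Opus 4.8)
The plan is to verify the definition of selflessness directly. Fix a free ultrafilter $\mathcal U$ and define $\theta\colon(\mathbf A_\Gamma,\omega_\Gamma)\star(\mathbf A_\Gamma,\omega_\Gamma)\to(\mathbf A_\Gamma^{\mathcal U},\omega_\Gamma^{\mathcal U})$ to be the diagonal embedding $\delta$ on the first factor and $\mathrm{Ad}(h)\circ\delta$ on the second, for a single Haar unitary $h$ in the centralizer $(\mathbf A_\Gamma^{\mathcal U})^{\omega_\Gamma^{\mathcal U}}$. By the universal property of the reduced free product, $\theta$ is a well-defined state-preserving $\ast$-homomorphism as soon as the two images $\delta(\mathbf A_\Gamma)$ and $h\,\delta(\mathbf A_\Gamma)\,h^{\ast}$ are free with respect to $\omega_\Gamma^{\mathcal U}$; the centralizer condition on $h$ makes $\mathrm{Ad}(h)$ state-preserving, so no separate check is needed there. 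Since a Haar unitary that is free from a subalgebra conjugates it to a free copy of itself, the whole problem reduces to exhibiting one Haar unitary $h\in(\mathbf A_\Gamma^{\mathcal U})^{\omega_\Gamma^{\mathcal U}}$ that is \emph{free} from the diagonal $\mathbf A_\Gamma$; the triangle $\theta\circ\iota=\delta$ then holds by construction, witnessing that $\iota$ is existential.

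To build $h$, fix an ordering $v_1,\dots,v_n$ of $V\Gamma$ and set
\[
g \;=\; u_{v_1}u_{v_2}\cdots u_{v_n}\ \in\ \mathbf A_\Gamma^{\omega_\Gamma}.
\]
As the centralizer is closed under products, $g$ again lies in $\mathbf A_\Gamma^{\omega_\Gamma}$. First, $g$ is a Haar unitary: for $m\neq 0$ the power $g^m$ is a word in the centered syllables $u_v$, and two occurrences of a fixed $u_{v_i}$ could combine only if $v_i$ commuted in $\Gamma$ with every other vertex, i.e.\ were isolated in $\Gamma^{c}$; connectedness of $\Gamma^{c}$ (with $|V\Gamma|\geq 3$) excludes this, so $g^m$ is a reduced word with centered syllables and $\omega_\Gamma(g^m)=0$. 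Setting $h=(g^k)_{\mathcal U}$ then gives a Haar unitary in $(\mathbf A_\Gamma^{\mathcal U})^{\omega_\Gamma^{\mathcal U}}$.

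The crux is that this $h$ is free from the diagonal $\mathbf A_\Gamma$. As $h$ is Haar, freeness is the assertion that
\[
\omega_\Gamma^{\mathcal U}\big(a_0\,h^{m_1}a_1\cdots a_{l-1}\,h^{m_l}a_l\big)=0
\]
for all $m_i\in\mathbb Z\setminus\{0\}$ and centered $a_i\in\mathbf A_\Gamma$, which equals $\lim_{\mathcal U}\omega_\Gamma\big(a_0\,g^{km_1}a_1\cdots g^{km_l}a_l\big)$. Each block $g^{km_i}$ is a reduced word of length $n|m_i|k$ in the syllables $u_v$, and a syllable $u_w$ of a fixed $a_j$ can penetrate at most one period into an adjacent block before being stopped by a non-commuting syllable (either another $u_w$ or a $u_{w'}$ with $w'$ a $\Gamma^{c}$-neighbour of $w$); in particular it cannot traverse an entire block. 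Hence, for $k$ large, the $a_j$ only perturb the two ends of each block, and the remaining task is to rule out that these perturbations conspire to annihilate whole blocks through the commutation relations.

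This last point is the heart of the proof, and it is exactly where connectedness of $\Gamma^{c}$ enters. The catastrophe to avoid is visible in the disconnected case: if $\Gamma^{c}$ split as $\Gamma^{c}_S\sqcup\Gamma^{c}_T$, then $\mathbf A_\Gamma=\mathbf A_S\otimes\mathbf A_T$ and $g=g_S\otimes g_T$, and choosing letters supported alternately in $\mathbf A_S\otimes 1$ and $1\otimes\mathbf A_T$ lets the two legs cancel their powers independently; for instance
\[
\omega_\Gamma^{\mathcal U}\big(h\,(p\otimes1)\,h^{-1}\,(1\otimes q)\,h\,(p^{\ast}\otimes1)\,h^{-1}\,(1\otimes q^{\ast})\big)=\omega_S(pp^{\ast})\,\omega_T(qq^{\ast})\neq0,
\]
so freeness fails. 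When $\Gamma^{c}$ is connected no such tensor decomposition exists, and a reduced-word analysis shows that any attempt to cancel a block forces an intervening centered letter to be conjugated by a surviving power, which sends its contribution to $0$; thus every term of every moment vanishes in the limit. Granting this, $h$ is free from $\mathbf A_\Gamma$, the two diagonal copies are free, and $\theta$ is a state-preserving embedding with $\theta\circ\iota=\delta$. Therefore $\iota$ is existential and $(\mathbf A_\Gamma,\omega_\Gamma)$ is selfless. As a consistency check, when $\Gamma$ has no edges the graph product is a reduced free product, $g=u_{v_1}\cdots u_{v_n}$ is the corresponding Avitzour-type shift, and the argument recovers the free-product case of Theorem \ref{MainTheorem2}.
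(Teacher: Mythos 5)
Your overall strategy---reduce selflessness to exhibiting a single element of the centralizer of $(\mathbf{A}_\Gamma^{\mathcal U},\omega_\Gamma^{\mathcal U})$ that is free from the diagonal copy of $\mathbf{A}_\Gamma$---is in the right spirit and close to what the paper does (the paper produces, following Ozawa, an isometry $T\in\mathcal{B}(\mathcal{H}_\Gamma)^{\mathcal U}$ with $T^\ast aT=\omega_\Gamma(a)$, hence a Toeplitz, and then a semicircular, element free from $\mathbf{A}_\Gamma$). But your concrete witness cannot work, and the failure is exact rather than something a finer reduced-word analysis could repair. You take a \emph{fixed} centered unitary $g=u_{v_1}\cdots u_{v_n}\in\mathbf{A}_\Gamma$ and set $h=[(g^k)_k]\in\mathbf{A}_\Gamma^{\mathcal U}$. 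Then $h$ commutes with the diagonal image of $g$, and $g$ is itself a centered (indeed Haar, by your own argument) unitary of $\mathbf{A}_\Gamma$. Hence the alternating moment
\[
\omega_\Gamma^{\mathcal U}\bigl(h^{\ast}\,g\,h\,g^{\ast}\bigr)\;=\;\lim_{k\to\mathcal U}\omega_\Gamma\bigl(g^{-k}gg^{k}g^{-1}\bigr)\;=\;\omega_\Gamma(1)\;=\;1\neq 0,
\]
with $h^{\ast},h$ centered in $C^{\ast}(h)$ and $g,g^{\ast}$ centered in $\mathbf{A}_\Gamma$, so $h$ is \emph{never} free from $\mathbf{A}_\Gamma$, regardless of the graph. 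More generally, no limit of powers of a single element of $\mathbf{A}_\Gamma$ can be free from $\mathbf{A}_\Gamma$, because it commutes with that element. (A further warning sign: your argument nowhere uses $\#V\Gamma\geq 3$ beyond reducedness of $g^m$, which already holds for the edgeless graph on two vertices; if it were correct it would show $\Cs_r(\mathbb{Z}_2)\star \Cs_r(\mathbb{Z}_2)\cong \Cs_r(\mathbb{Z}_2\star\mathbb{Z}_2)$ is selfless, which the paper points out is false---that algebra is not even simple. Your closing ``consistency check'' also quietly drops the Avitzour condition of Theorem \ref{MainTheorem2}.)

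The paper circumvents exactly this obstruction by making the test elements depend on the data to be handled: given a finite set $\mathcal F\subseteq W_\Gamma\setminus\{e\}$ of word types and $n\in\mathbb{N}$, it chooses (Lemma \ref{PathAbundance}, Proposition \ref{DifferentPaths-1}) $n$ distinct closed walks in $\Gamma^c$ covering the graph, long compared to $\max_{\mathbf w\in\mathcal F}|\mathbf w|$, and forms $T_{n,\mathcal F}=\tfrac{1}{\sqrt{2n}}\sum_i\bigl(\mathbf{Q}(u_{\mathbf{g}_ixy}\Omega)(u_{\mathbf{g}_ixy}u_xu_{\mathbf{g}_ixz}^{\ast})+(u_{\mathbf{g}_ixz}u_x^{\ast}u_{\mathbf{g}_ixy}^{\ast})\mathbf{Q}(u_{\mathbf{g}_ixy}\Omega)^{\perp}\bigr)$. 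The projections $\mathbf{Q}(\cdot)$ and the orthogonality relations of Lemma \ref{MultiplicationIdentities-1} give the \emph{exact} identities $T_{n,\mathcal F}^{\ast}aT_{n,\mathcal F}=0$ for $a$ of type in $\mathcal F$ and $\|T_{n,\mathcal F}^{\ast}T_{n,\mathcal F}-1\|\leq 1/n$; only then does one pass to an ultralimit over the directed set of pairs $(n,\mathcal F)$. The averaging over $n$ distinct walks and the re-choosing of walks after $\mathcal F$ is fixed are precisely what prevent the limiting element from commuting with anything in $\mathbf{A}_\Gamma$. If you want to salvage your approach, you must replace the single sequence $(g^k)_k$ by a net of elements indexed by $(n,\mathcal F)$ in this way; as written, the ``heart of the proof'' that you defer is not merely unproved but false.
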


We remark that, assuming that the vertex \Cs-algebras are separable, with  Theorems \ref{MainTheorem2}, \ref{MainTheorem1}, and \cite[Theorem 11]{Ozawa25} we can handle general graphs $\Gamma$ by decomposing $\Gamma^c$ into its connected components.

As for reduced free products, by combining Theorem \ref{MainTheorem1} with \cite[Theorem 3.1]{Robert25}, \cite[Proposition 6.3.1]{Robert12}, and \cite[Theorem 2]{Gould26}, we obtain the following application. 
We remark that this is the first result establishing strict comparison and stable rank one for reduced graph product \Cs-algebras associated with graphs with more than two vertices.
We also extend previous results about simplicity, pure infiniteness, and the unique trace property of reduced graph products (see \cite[Theorem E]{Klisse25}).

\begin{introcorollary}
    Let $\Gamma$, $\mathbf{A}=(A_v)_{v\in V\Gamma}$, and $(\omega_{v})_{v\in V\Gamma}$ be as in Theorem \ref{MainTheorem1}.
    Then the reduced graph product $(\mathbf{A}_\Gamma,\omega_\Gamma)$ has the uniform Dixmier property. In particular, it is simple and either 
    \begin{enumerate}[leftmargin=*,label=\textup{(\roman*)}]
        \item  $\omega_\Gamma$ is tracial, in which case it is the unique 2-quasitracial state, and $(\mathbf{A}_\Gamma,\omega_\Gamma)$ has strict comparison, stable rank one, and admits a unique embedding of the Jiang--Su algebra $\mathcal Z$ up to approximate unitary equivalence; or
        \item $\omega_\Gamma$ is not tracial, in which case $(\mathbf{A}_\Gamma,\omega_\Gamma)$  is purely infinite.
    \end{enumerate}
\end{introcorollary}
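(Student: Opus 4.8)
The plan is to read off the corollary from the selflessness supplied by Theorem \ref{MainTheorem1}, together with the general structure theory of selfless \Cs-probability spaces. First I would note that the hypotheses of Theorem \ref{MainTheorem1} are precisely those assumed here, so that $(\mathbf{A}_\Gamma,\omega_\Gamma)$ is selfless. I would also record the small observation that a unitary lying in $\ker(\omega_v)\cap A_v^{\omega_v}$ cannot equal a nonzero scalar multiple of the unit, so $A_v\neq\mathbb{C}$ for every vertex $v$; in particular $\mathbf{A}_\Gamma\not\cong\mathbb{C}$, and the nondegeneracy requirement in the definition of selflessness is met.

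With selflessness established, I would invoke \cite[Theorem 3.1]{Robert25}. This result encapsulates exactly the dichotomy appearing in the statement: a selfless \Cs-probability space $(A,\rho)$ is simple, has the uniform Dixmier property, and carries at most one $2$-quasitracial state. Thus either $A$ admits a $2$-quasitrace---which Robert's framework identifies with $\rho=\omega_\Gamma$ and which is then the unique such state---placing us in the monotracial case together with strict comparison and stable rank one; or $A$ is traceless, and under the extra hypothesis that $\omega_\Gamma$ is faithful the same circle of ideas delivers pure infiniteness. This accounts for the uniform Dixmier property, simplicity, and both horns of the dichotomy apart from the assertion concerning $\mathcal{Z}$.

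To finish the monotracial case I would apply \cite[Proposition 6.3.1]{Robert12}, by which a simple, monotracial \Cs-algebra with strict comparison admits a unique unital embedding of the Jiang--Su algebra $\mathcal{Z}$ up to approximate unitary equivalence; feeding in the strict comparison already obtained completes this alternative and hence the corollary.

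The argument is thus largely a matter of assembling cited results, and the one point I would treat with care---the expected main obstacle---is to confirm that these structural and regularity conclusions carry no tacit exactness or nuclearity assumption, since a reduced graph product of arbitrary unital \Cs-algebras is in general neither. Concretely, I would check that Robert's dichotomy is genuinely phrased at the level of $2$-quasitraces, so that the monotracial alternative is stated correctly, and that the faithfulness of $\omega_\Gamma$ is exactly the input needed to force pure infiniteness in the traceless alternative.
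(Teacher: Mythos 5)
Your proposal matches the paper's own (implicit) argument exactly: the corollary is obtained by combining the selflessness from Theorem \ref{MainTheorem1} with \cite[Theorem 3.1]{Robert25} for the uniform Dixmier property, simplicity, and the dichotomy, and with \cite[Proposition 6.3.1]{Robert12} for the uniqueness of the $\mathcal{Z}$-embedding in the monotracial case. The additional observation that the Avitzour-type unitaries force $\mathbf{A}_\Gamma\not\cong\mathbb{C}$ is a correct and harmless supplement.
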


We also remark that our results give access to new classes of examples.
In particular, pairs of \Cs-probability spaces that admit a Haar unitary clearly satisfy the Avitzour condition. Moreover, it follows from \cite[Corollary 5.5]{Thiel24} that a simple,  non-elementary \Cs-algebra equipped with a tracial state always has a Haar unitary. Together with Theorem \ref{MainTheorem2}, Theorem \ref{MainTheorem1} and \cite[Theorem 11]{Ozawa25}, this immediately yields the following consequence.

\begin{introcorollary}
Let $\Gamma$ be an undirected, simplicial graph with more than one vertex, and $\mathbf{A}=(A_v)_{v\in V\Gamma}$ a family of simple, separable, unital, non-elementary \Cs-algebras, each equipped with a GNS-faithful tracial state $\omega_v$. Assume that if $v$ is an isolated vertex in $\Gamma^c$, $(A_v,\omega_v)$ is completely selfless.
Then $(\mathbf{A}_\Gamma,\omega_\Gamma)$ is completely selfless.
\end{introcorollary}

In the non-tracial setting, \cite[Proposition 6.3]{Thiel24} provides conditions equivalent to admitting a Haar unitary. 

Note that, unlike previous works that only apply to reduced free products of reduced group \Cs-algebras \cite{AmrutamGaoElayavalliPatchell24,Ozawa25,HayesElayavalliPatchellRobert25}, our main result also applies to certain reduced free products of full group \Cs-algebras. For instance, for any $n \geq 2$, there exists a faithful tracial state $\tau_n$ on $\Cs(\mathbb F_n)$, obtained by adapting Choi's construction in \cite{Choi}. With respect to this trace, $(\Cs(\mathbb F_n),\tau_n)$ admits unitaries satisfying the Avitzour condition, so every reduced free product $(\Cs(\mathbb F_m),\tau_m)\star(\Cs(\mathbb F_n),\tau_n)$ with $m,n\geq2$ falls within the scope of Theorem~\ref{MainTheorem2}; see Example~\ref{Example} for details.  To the authors' knowledge, it remains unknown whether $(\Cs(\mathbb F_n),\tau_n)$ admits a filtration with rapid decay.

\subsection*{Outline of the main proofs} 
The proofs of Theorem~\ref{MainTheorem2} and Theorem~\ref{MainTheorem1}  share the same overarching methodology, which we explain in broad strokes below.

Our approach relies on a recent result of Ozawa, \cite[Theorem 13]{Ozawa25}, which establishes the following criterion ensuring that a \Cs-probability space $(A,\omega)$ is completely selfless. Assuming that $(A,\omega)$ is faithfully represented on $(B(\mathcal H),\rho)$, it is sufficient to find an ultrafilter $\mathcal U$ and an operator $T\in \mathcal B(\mathcal H)^{\mathcal U}$ such that 
\begin{enumerate}[leftmargin=*,label=\textup{(\roman*)}]
    \item $T^*T=1$,
    \item $T+T^* \in A^{\mathcal U}$,
    \item $TaT^*=\omega(a)$ for all $a\in A$, and
    \item $\rho^{\mathcal U}(aTT^*a^*)=0$ for all $a\in A$.
\end{enumerate}
Let us briefly comment on this criterion.
As shown by Ozawa, the conditions above ensure that the \Cs-algebra generated by $T$ in $\mathcal B(\mathcal H)^{\mathcal U}$ is a (state-preserving) copy of the Toeplitz \Cs-probability space, which is free from the diagonal copy of $A$ with respect to $\rho^{\mathcal U}$. Moreover, condition (ii) ensures that the subalgebra of the Toeplitz algebra generated by $T+T^*$, which is isomorphic to $C([-2,2])$, is contained in $A^{\mathcal U}$. Since one also has that the restriction of $\rho^{\mathcal U}$ to $ \Cs(T+T^*)$ is faithful, it follows that $(A,\omega)$ is selfless. To achieve complete selflessness, Ozawa observes that tensoring a \Cs-algebra to $A$ does not change the universal property (see \cite[Lemma 12]{Ozawa25}) used to provide a copy of the Toeplitz algebra.

As a consequence, the difficult part of our proofs is the construction of a suitable operator $T$. Our approach is combinatorial and proceeds as follows. First recall that the reduced graph product \Cs-algebra $\mathbf A_{\Gamma}$ is represented on the graph product Hilbert space $\mathcal H_{\Gamma}$, which is built from a vacuum vector $\Omega$, and tensor products of centered Hilbert spaces $\mathcal H^{\circ}_{\mathbf w}$ associated with reduced words ${\mathbf w}$ on the graph (see Section \ref{sec:Preliminaries-and-notation}). We exploit the connectedness and the size of the complement graph ($\Gamma^c$ for Theorem~\ref{MainTheorem2} or an auxiliary graph $\tilde{\Gamma}^c$ for Theorem~\ref{MainTheorem1}) to construct families of distinct closed walks that completely cover the complement graph.  Since every walk on the complement graph corresponds to a reduced word ${\mathbf w}$, each walk we found before dictates exactly how to multiply the Avitzour unitaries to obtain a given unitary element $u_{\mathbf w}$ in $\mathbf{A}_\Gamma$. We then use each unitary $u_{\mathbf w}$ to produce a projection $\mathbf{Q}(u_{\mathbf w}\Omega)$ onto the subspace of $\mathcal H_\Gamma$ generated by vectors whose tensor decomposition begins with $u_{\mathbf w}\Omega$.

The Avitzour condition forces a series of orthogonality relations for the projections $\mathbf{Q}(u_{\mathbf w}\Omega)$'s that are crucial in the next and final step of our proof. For each finite subset of minimal words on the graph, and for each $n\in\mathbb N$, we define an operator $T_{n,\mathcal{F}} \in \mathcal B(\mathcal H_\Gamma)$ as a linear combination of products of the abovementioned unitaries and projections. We then proceed to show that the operators $T_{n,\mathcal{F}}$'s asymptotically satisfy conditions (i)--(iv). Hence, for a suitable ultrafilter $\mathcal U$, one has that the limit along $\mathcal U$ of the $T_{n,\mathcal F}$'s  is the desired isometry $T\in\mathcal B(\mathcal H_\Gamma)$. By the discussion above, this is enough to conclude the proof.

\subsection*{Structure of the article} The article is organized as follows. Section \ref{sec:Preliminaries-and-notation} introduces preliminaries and fixes notation. In particular, it introduces reduced graph products of \Cs-algebras and Coxeter groups. Section \ref{sec:Main-construction} contains our main results. It is divided into three subsections: Subsection \ref{subsec:graph-product-construction}, which dives into the construction of certain (universal) \Cs-algebras associated with reduced graph products of \Cs-algebras, Subsection \ref{subsec:selflessness-graphs} which is dedicated to the proof of Theorem \ref{MainTheorem1} and Subsection \ref{subsec:selflessness-free}, which proves Theorem \ref{MainTheorem2}.


\section{Preliminaries and Notation\label{sec:Preliminaries-and-notation}}


\subsection{General notation}

We denote by $\mathbb{N} := \{0, 1, 2, \ldots\}$ the set of non-negative integers, and by $\mathbb{N}_{\geq 1} := \{1, 2, 3, \ldots\}$ the set of positive integers. The identity element of a group is denoted by $e$.

Throughout, inner products on Hilbert spaces are taken to be linear in the second variable. For a Hilbert space $\mathcal{H}$, we write $\mathcal{B}(\mathcal{H})$ for the C$^{*}$-algebra of all bounded linear operators on $\mathcal{H}$.

\subsection{Ultrapowers} 

Fix a free ultrafilter $\mathcal{U}$ on $\mathbb{N}$. For a C$^*$-algebra $A$, set 
\[ 
\ell^\infty(A) := \{(a_n)_{n\in\mathbb{N}} \subseteq A \mid \sup_{n\in\mathbb{N}} \|a_n\| < \infty\}, \qquad \mathcal{I}_{\mathcal{U}} := \{(a_n) \in \ell^\infty(A) \mid \lim_{n\to\mathcal{U}} \|a_n\| = 0\}, 
\] 
and define the \emph{ultrapower} of $A$ with respect to $\mathcal{U}$ by $A^{\mathcal{U}} := \ell^\infty(A) / \mathcal{I}_{\mathcal{U}}$. If $\rho$ is a state on $A$, its \emph{ultralimit} $\rho^{\mathcal{U}}$ on $A^{\mathcal{U}}$ is given by $\rho^{\mathcal{U}}([(a_n)_n]) = \lim_{n\to\mathcal{U}} \rho(a_n)$ for every sequence $(a_n)_n \in \ell^{\infty}(A)$, whose class in $A^{\mathcal U}$ is denoted by $[(a_n)_n]$.
The notation $(A,\rho)^{\mathcal U}$ refers to $(A^{\mathcal U},\rho^{\mathcal U})$.


\subsection{Graphs\label{subsec:Graphs}}

Given a graph $\Gamma$, its \emph{vertex set} is denoted by $V\Gamma$, and its \emph{edge set} by $E\Gamma$. Throughout this article, all graphs are assumed to be undirected, and \emph{simplicial}, that is, $E\Gamma \subseteq (V\Gamma \times V\Gamma) \setminus \{(v,v) \mid v \in V\Gamma\}$. A graph is said to be \emph{finite} if $V\Gamma$ contains finitely many vertices. 
We define the set of all (finite) \emph{words} in $V\Gamma$ by
\[
\mathcal{W}_{\Gamma} := \bigsqcup_{i \in \mathbb{N}} \underbrace{(V\Gamma \times \cdots \times V\Gamma)}_{i \text{ times}}.
\]
Such words are typically denoted by boldface letters. Note that, unlike in \cite{CaspersFima17}, we assume that $\mathcal{W}_{\Gamma}$ contains the empty word $\emptyset$.

Following \cite{Green90} and \cite{CaspersFima17}, we endow $\mathcal{W}_{\Gamma}$ with the \emph{shuffle equivalence} relation $\sim$, generated by the rule
\begin{equation}\label{eq:ShuffleEquivalence}
(v_{1},\ldots,v_{i-1},v_{i},v_{i+1},\ldots,v_{n})
\sim
(v_{1},\ldots,v_{i-1},v_{i+1},v_{i},\ldots,v_{n})
\quad \text{whenever } (v_{i},v_{i+1}) \in E\Gamma.
\end{equation}
Two words belonging to the same equivalence class under this relation are said to be \emph{shuffle equivalent}.

Similarly, for $\mathbf{v}, \mathbf{w} \in \mathcal{W}_{\Gamma}$, we write $\mathbf{v} \simeq \mathbf{w}$ and say that they are \emph{equivalent} if they belong to the same equivalence class generated by shuffle equivalence together with the additional relation that $(v_{1},\ldots,v_{i},v_{i+1},v_{i+2},\ldots,v_{n})$ is equivalent to $(v_{1},\ldots,v_{i},v_{i+2},\ldots,v_{n})$ whenever $v_{i}=v_{i+1}$.
\\

For a subset of vertices $V_0 \subseteq V\Gamma$, there is a unique graph $\Gamma_0$ whose vertex set is $V_0$ and whose edge set is specified as follows: for each pair $(v,w)\in V_0 \times V_0$, $(v,w)\in E\Gamma_0$ if and only if $(v,w)\in E\Gamma$. $\Gamma_0$ is said to be the \emph{full subgraph} of $\Gamma$ with vertex set $V_0$.
For a vertex $v \in V\Gamma$, the \emph{link} of $v$, denoted by $\mathrm{Link}(v)$, is the full subgraph of $\Gamma$ with vertex set $\{\, w \in V\Gamma \mid (v, w) \in E\Gamma \,\}$. The \emph{star} of $v$, denoted by $\mathrm{Star}(v)$, is the full subgraph of $\Gamma$ with vertex set $\{v\} \cup V(\mathrm{Link}(v))$.

A word $\mathbf{v} = (v_{1}, \ldots, v_{n}) \in \mathcal{W}_{\Gamma}$ is called \emph{reduced} if, for every pair of indices $1 \leq i < j \leq n$ with $v_{i} = v_{j}$, there exists an index $i < k < j$ such that $v_{k} \notin \mathrm{Star}(v_{i})$. We denote by $\mathcal{W}_{\mathrm{red}}$ the set of all reduced words.

The \emph{length} of a word $\mathbf{v} \in \mathcal{W}_{\Gamma}$, denoted by $|\mathbf{v}|$, is the length of the shortest representative in its equivalence class. Note that a word $\mathbf{v} = (v_{1}, \ldots, v_{n})$ is reduced if and only if $|\mathbf{v}| = n$.

By \cite[Lemma~1.3]{CaspersFima17}, any two equivalent reduced words $\mathbf{v} = (v_{1}, \ldots, v_{n})$ and $\mathbf{w} = (w_{1}, \ldots, w_{n})$ admit a unique permutation $\sigma$ of the set $\{1, \ldots, n\}$ such that
\[
\mathbf{w} = (v_{\sigma(1)}, \ldots, v_{\sigma(n)}) 
\quad \text{and} \quad 
\sigma(i) > \sigma(j) \quad \text{ if } i > j \text{ and } v_{i} = v_{j}.
\]

In view of the discussion above, we fix a subset $\mathcal{W}_{\mathrm{min}} \subseteq \mathcal{W}_{\mathrm{red}}$ consisting of representatives of the shuffle equivalence classes. The elements of $\mathcal{W}_{\mathrm{min}}$ are called \emph{minimal words}. Every word in $\mathcal{W}_{\Gamma}$ is equivalent to a unique minimal word.

For an undirected, simplicial graph $\Gamma$, we denote its \emph{complement} by $\Gamma^{c}$, which is the graph with vertex set $V\Gamma$ and edge set $\{\, (v, v') \in V\Gamma \times V\Gamma \mid v \neq v',\; (v, v') \notin E\Gamma \,\}$. The complement $\Gamma^{c}$ is again  undirected and simplicial.

A graph $\Gamma$ is said to be \emph{connected} if for every pair $(v,w)\in V\Gamma \times V\Gamma$ there exists a finite sequence of vertices $v_0,v_1,\dots,v_n \in V\Gamma$ with $v_0=v$ and $v_n=w$ such that $(v_{i-1},v_{i})\in E\Gamma$ for $i=1,\dots,n$.
A complete subgraph $\Gamma_0$ of $\Gamma$, which is a graph with $(v,w)\in E\Gamma_0$ for all $v,w\in V\Gamma_0$, is called a \emph{clique}. The \emph{degree} of a vertex $v\in V\Gamma$ is the number of elements $w\in V\Gamma$ with $(v,w)\in E\Gamma$.


\subsection{Coxeter groups\label{subsec:Right-angled-Coxeter-groups}}

A \emph{Coxeter group} is a group $W$ that admits a presentation of the form
\[
W = \left\langle S \,\middle|\, (st)^{m_{st}} = e \text{ for all } s, t \in S \right\rangle,
\]
where $S$ is an arbitrary generating set, and the exponents $m_{st} \in \{1, 2, \ldots, \infty\}$ satisfy $m_{ss} = 1$ and $m_{st} \ge 2$ for all distinct $s, t \in S$. A relation of the form $(st)^{m} = e$ is imposed when $m_{st} < \infty$; the case $m_{st} = \infty$ indicates that no such relation is imposed. The pair $(W, S)$ is referred to as a \emph{Coxeter system}.

A Coxeter system is said to be \emph{right-angled} if $m_{st} \in \{2, \infty\}$ for all $s \ne t$, that is, if any two distinct generators either commute or generate an infinite dihedral group. In a right-angled Coxeter group, if a cancellation of the form $s_{1} \cdots s_{n} = s_{1} \cdots \widehat{s_{i}} \cdots \widehat{s_{j}} \cdots s_{n}$ occurs for $s_{1}, \ldots, s_{n} \in S$, then $s_{i} = s_{j}$ and $s_{i}$ commutes with $s_{i+1}, \ldots, s_{j-1}$. For further background on Coxeter groups, we recommend consulting \cite{Davis08}.

Given a Coxeter system $(W, S)$, we denote by $|\cdot|$ the associated word length function. For elements $\mathbf{v}, \mathbf{w} \in W$, we say that $\mathbf{w}$ \emph{starts in} $\mathbf{v}$ if $|\mathbf{v}^{-1} \mathbf{w}| = |\mathbf{w}| - |\mathbf{v}|$, and write $\mathbf{v} \le_{R} \mathbf{w}$. Similarly, $\mathbf{w}$ is said to \emph{end in} $\mathbf{v}$ if
$|\mathbf{w} \mathbf{v}^{-1}| = |\mathbf{w}| - |\mathbf{v}|$, and we write $\mathbf{v} \le_{L} \mathbf{w}$. Both relations define partial orders on $W$, known respectively as the \emph{right weak Bruhat order} and the \emph{left weak Bruhat order}. For notational convenience, we shall usually write $\le$ instead of $\le_{R}$.

Given an undirected, simplicial graph  $\Gamma$,  we associate a right-angled Coxeter system $(W_{\Gamma}, S_{\Gamma})$ by setting
\[
S_{\Gamma} := V\Gamma
\quad \text{and} \quad
W_{\Gamma} := \left\langle S_{\Gamma} \,\middle|\, s^{2} = e \text{ for all } s \in S_{\Gamma},\; st = ts \text{ whenever } (s, t) \in E\Gamma \right\rangle.
\]
The group $W_{\Gamma}$ can be identified with the set $\mathcal{W}_{\mathrm{min}}$ of minimal words (equivalently, with $\mathcal{W}_{\mathrm{red}}$ modulo shuffle equivalence) via $(v_{1}, \ldots, v_{n}) \mapsto v_{1} \cdots v_{n}$. Under this identification, $\mathcal{W}_{\mathrm{min}}$ inherits a natural group structure. Moreover, the right and left weak Bruhat orders on $W_{\Gamma}$ induce corresponding partial orders on $\mathcal{W}_{\mathrm{min}}$. For notational convenience, we shall henceforth write $W_{\Gamma}$ in place of $\mathcal{W}_{\mathrm{min}}$ whenever no confusion arises. The length function $|\cdot|$ on $\mathcal{W}_{\mathrm{min}}$ from the previous subsection coincides with the word length on $W_{\Gamma}$ with respect to the generating set $S_{\Gamma}$.


\subsection{Reduced graph products of \texorpdfstring\Cs--algebras\label{GraphProductCAlgebras}}

Let $\Gamma$ be an undirected, simplicial graph, and consider a collection $\mathbf{A} := (A_{v})_{v\in V\Gamma}$ of unital \Cs-algebras, each carrying a GNS-faithful state $\omega_{v}$. We identify $A_{v}$ with its image inside $\mathcal{B}(\mathcal{H}_{v})$, where $\mathcal{H}_{v} := L^{2}(A_{v}, \omega_{v})$ is the associated GNS-Hilbert space and $\xi_{v}\in\mathcal{H}_{v}$ denotes the corresponding cyclic vector. For $x \in \mathcal{B}(\mathcal{H}_{v})$, define $x^{\circ} := x - \langle x\xi_{v}, \xi_{v}\rangle 1$, and let $A_{v}^{\circ} := \ker(\omega_{v})$. Denote by $\mathcal{H}_{v}^{\circ} := \mathcal{H}_{v} \ominus \mathbb{C}\xi_{v}$ the orthogonal complement of $\xi_{v}$. For a reduced word $\mathbf{v} = (v_{1}, \dots, v_{n}) \in \mathcal{W}_{\text{red}}$, we set $\mathcal{H}_{\mathbf{v}}^{\circ} := \mathcal{H}_{v_{1}}^{\circ} \otimes \cdots \otimes \mathcal{H}_{v_{n}}^{\circ}$.

As explained in Subsection~\ref{subsec:Right-angled-Coxeter-groups}, two equivalent reduced words $\mathbf{v} = (v_{1}, \dots, v_{n})$ and $\mathbf{w} = (w_{1}, \dots, w_{n})$ are related via a unique permutation $\sigma$ satisfying $\mathbf{w} = (v_{\sigma(1)}, \dots, v_{\sigma(n)})$ and $\sigma(i) > \sigma(j)$ whenever $i > j$ and $v_{i} = v_{j}$. This gives rise to a unitary operator $\mathcal{Q}_{\mathbf{v},\mathbf{w}} : \mathcal{H}_{\mathbf{v}}^{\circ} \to \mathcal{H}_{\mathbf{w}}^{\circ}$ via $\xi_{1}\otimes\cdots\otimes\xi_{n} \mapsto \xi_{\sigma(1)}\otimes\cdots\otimes\xi_{\sigma(n)}$.
For convenience, we suppress these unitaries from the notation and identify the spaces $\mathcal{H}_{\mathbf{v}}^\circ$ and $\mathcal{H}_{\mathbf{w}}^\circ $ through $\mathcal{Q}_{\mathbf{v},\mathbf{w}}$. Furthermore, we identify $\mathcal{W}_{\text{min}}$ with $W_{\Gamma}$, as in Subsection~\ref{subsec:Right-angled-Coxeter-groups}.

The \emph{graph product Hilbert space} (or \emph{Fock space}) associated with this data is
\[
\mathcal{H}_{\Gamma} := \mathbb{C}\Omega \oplus \bigoplus_{\mathbf{w}\in W_{\Gamma}\setminus\{e\}} \mathcal{H}_{\mathbf{w}}^{\circ},
\]
where $\Omega$ denotes the so-called \emph{vacuum vector}. Occasionally, we may write $\mathcal{H}_{e}^{\circ} := \mathbb{C}\Omega$.

For $v\in V\Gamma$, $x\in\mathcal{B}(\mathcal{H}_{v})$, $\mathbf{w}\in W_{\Gamma}$, and elementary tensors $\eta_{1}\otimes\cdots\otimes\eta_{n}\in\mathcal{H}_{\mathbf{w}}^{\circ}$ with $n=|\mathbf{w}|$, define
\begin{eqnarray*}
 &  & \lambda_{v}(x)(\eta_{1}\otimes\dots\otimes\eta_{n})\\
 & := & \begin{cases}
x^{\circ}\xi_{v}\otimes\eta_{1}\otimes\dots\otimes\eta_{n}+\langle x\xi_{v},\xi_{v}\rangle\eta_{1}\otimes\dots\otimes\eta_{n}, & \text{if }v\nleq\mathbf{w},\\
(x\eta_{1}-\langle x\eta_{1},\xi_{v}\rangle\xi_{v})\otimes\eta_{2}\otimes\dots\otimes\eta_{n}+\langle x\eta_{1},\xi_{v}\rangle\eta_{2}\otimes\dots\otimes\eta_{n}, & \text{if }v\leq\mathbf{w}\text{ and }\eta_{1}\in\mathcal{H}_{v}^{\circ}.
\end{cases}
\end{eqnarray*}
This gives a faithful, unital $*$-homomorphism $\lambda_{v}:\mathcal{B}(\mathcal{H}_{v})\to\mathcal{B}(\mathcal{H}_{\Gamma})$, and the images of $\lambda_{v}$ and $\lambda_{v'}$ commute whenever $(v,v')\in E\Gamma$ (see \cite[Subsection~2.1]{CaspersFima17}). The \emph{algebraic graph product} is defined as
\[
\star_{v,\Gamma}^{\mathrm{alg}}(A_{v},\omega_{v})
 := \ast\text{-alg}\bigl(\{\lambda_{v}(a)\mid v\in V\Gamma,\,a\in A_{v}\}\bigr)
 \subseteq \mathcal{B}(\mathcal{H}_{\Gamma}),
\]
and the corresponding \emph{reduced graph product \Cs-algebra} is its norm closure:
\[
\star_{v,\Gamma}(A_{v},\omega_{v})
 := \overline{\star_{v,\Gamma}^{\mathrm{alg}}(A_{v},\omega_{v})}^{\|\cdot\|}
 \subseteq \mathcal{B}(\mathcal{H}_{\Gamma}).
\]
We also set $\mathbf{A}_{\Gamma}^{\mathrm{alg}} := \star_{v,\Gamma}^{\mathrm{alg}}(A_{v},\omega_{v})$ and $\mathbf{A}_{\Gamma} := \star_{v,\Gamma}(A_{v},\omega_{v})$.

An element $a\in\mathbf{A}_{\Gamma}$ of the form $a = \lambda_{v_{1}}(a_{1})\cdots\lambda_{v_{n}}(a_{n})$ with $a_{i}\in A_{v_{i}}^{\circ}$ and $(v_{1},\dots,v_{n})\in\mathcal{W}_{\text{red}}$ is called \emph{reduced of type} $\mathbf{v}:=v_{1}\cdots v_{n}$, and $\mathbf{v}\in W_{\Gamma}$ is the \emph{associated word} of $a$. We will often write $a_{\mathbf{v}} := a_{1}\cdots a_{n} \in \mathbf{A}_{\Gamma}$, suppressing the maps $(\lambda_{v})_{v\in V\Gamma}$ for readability. For a reduced operator of type $\mathbf{v}\in W_{\Gamma}$, we refer to $|\mathbf{v}|$ as its \emph{length} and say that it \emph{starts} in $\mathbf{w}\in W_{\Gamma}$ if $\mathbf{w}\le\mathbf{v}$.

The \emph{graph product state} $\omega_{\Gamma}$ on $\mathcal B(\mathcal H_\Gamma)$ is defined as the vector state associated with $\Omega$. Slightly abusing notation, we denote the restriction of $\omega_{\Gamma}$ to $\mathbf{A}_{\Gamma}$ by $\omega_{\Gamma}$ as well, and we note that it is a GNS-faithful state. Moreover, $\omega_{\Gamma}(a)=0$ for every reduced element $a\in\mathbf{A}_{\Gamma}$ of type $\mathbf{v}\in W_{\Gamma}\setminus\{e\}$.

\begin{proposition}[see {\cite[Proposition 2.12]{CaspersFima17}}] Let $B$ be a unital \Cs-algebra with a GNS-faithful state $\omega$. Assume that for each $v\in V\Gamma$ there exists a unital faithful $*$-homomorphism $\pi_{v}:A_{v}\to B$ such that: 
\begin{itemize}[leftmargin=2em]
\item $B$ is generated by $\{\pi_{v}(a)\mid v\in V\Gamma,\,a\in A_{v}\}$. 
\item $\pi_{v}(A_{v})$ and $\pi_{v'}(A_{v'})$ commute whenever $(v,v')\in E\Gamma$. 
\item For each reduced operator $a=\lambda_{v_{1}}(a_{1})\dots\lambda_{v_{n}}(a_{n})$ with $a_{i}\in A_{v_{i}}^{\circ}$, one has $\omega(a)=0$. 
\end{itemize}
Then there exists a unique $*$-isomorphism $\pi:\mathbf{A}_{\Gamma}\to B$ with $\pi|_{A_{v}}=\pi_{v}$ for all $v\in V\Gamma$, and satisfying $\omega\circ\pi=\omega_{\Gamma}$. \end{proposition}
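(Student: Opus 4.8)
The plan is to realise both $\mathbf{A}_\Gamma$ and $B$ on a common Hilbert space by exhibiting a unitary between their GNS spaces, and then to transport the identity operator. Write $(\mathcal{H}_\omega,\pi_\omega,\xi_\omega)$ for the GNS triple of $(B,\omega)$; since $\omega$ is GNS-faithful, $\pi_\omega$ is a faithful representation. I read the third hypothesis as the statement that $\omega(\pi_{v_1}(a_1)\cdots\pi_{v_n}(a_n))=0$ whenever $(v_1,\dots,v_n)\in\mathcal{W}_{\mathrm{red}}$ and $a_i\in A_{v_i}^\circ$, i.e. that the state $\omega$ behaves on reduced products exactly as $\omega_\Gamma$ does. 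For a reduced operator $a_{\mathbf{v}}=\lambda_{v_1}(a_1)\cdots\lambda_{v_n}(a_n)$ I write $\pi(a_{\mathbf{v}}):=\pi_{v_1}(a_1)\cdots\pi_{v_n}(a_n)\in B$; the edge-commutation hypothesis guarantees this is compatible with shuffle equivalence, so $a_{\mathbf{v}}\mapsto\pi(a_{\mathbf{v}})$ is unambiguous.

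First I would record the normal-form description of $\mathbf{A}_\Gamma^{\mathrm{alg}}$ coming from the Fock construction (see \cite[Subsection~2.1]{CaspersFima17}): the linear span of $1$ together with the reduced operators is all of $\mathbf{A}_\Gamma^{\mathrm{alg}}$, and applying a reduced operator of type $\mathbf{v}$ to the vacuum yields $a_1\xi_{v_1}\otimes\cdots\otimes a_n\xi_{v_n}\in\mathcal{H}_{\mathbf{v}}^\circ$, these vectors being linearly independent across distinct types $\mathbf{v}\in W_\Gamma$ (by orthogonality of the summands and GNS-faithfulness within each $A_v^\circ$). This gives a dense subspace $\mathcal{D}:=\mathbf{A}_\Gamma^{\mathrm{alg}}\Omega\subseteq\mathcal{H}_\Gamma$ with a reduced-word basis; the analogous span $\mathrm{span}\{\pi(a_{\mathbf{v}})\xi_\omega\}\cup\{\xi_\omega\}$ is dense in $\mathcal{H}_\omega$ because $B$ is generated by the $\pi_v(A_v)$ and any product of such generators reduces, using commutation along edges and multiplication inside each vertex algebra, to a linear combination of reduced operators and a scalar.

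The central step is to define $U\colon\mathcal{H}_\Gamma\to\mathcal{H}_\omega$ by $U\Omega=\xi_\omega$ and $U(a_{\mathbf{v}}\Omega)=\pi(a_{\mathbf{v}})\xi_\omega$, and to prove it is a well-defined surjective isometry. Both well-definedness and isometry reduce to the single inner-product identity
\[
\langle a_{\mathbf{v}}\Omega,\,b_{\mathbf{w}}\Omega\rangle_{\mathcal{H}_\Gamma}
=\omega\bigl(\pi(b_{\mathbf{w}})^{*}\pi(a_{\mathbf{v}})\bigr),
\]
which I would prove by induction on $|\mathbf{v}|+|\mathbf{w}|$. The inductive step uses the edge-commutation hypothesis to bring a matching pair of letters adjacent, the splitting $x^{*}y=\omega_v(x^{*}y)1+(x^{*}y)^\circ$ inside the relevant vertex algebra, and the third hypothesis to annihilate the surviving reduced tail; the resulting recursion matches term-by-term the recursion defining the Fock inner product through the identifications $\mathcal{Q}_{\mathbf{v},\mathbf{w}}$, the bookkeeping being controlled by the unique permutation of \cite[Lemma~1.3]{CaspersFima17}. \textbf{This combinatorial identity is the main obstacle}: it is where all three hypotheses are genuinely used and where the shuffle-equivalence bookkeeping must be handled with care. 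Surjectivity of $U$ was arranged in the previous paragraph, so $U$ is unitary.

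Finally I would verify the covariance relation $U\lambda_v(a)=\pi_\omega(\pi_v(a))U$ on $\mathcal{D}$, matching the two cases in the definition of $\lambda_v(a)$ (according to whether $v\le\mathbf{w}$) with left multiplication by $\pi_v(a)$ in $B$ via the same reduction; this is a variant of the computation just described. Consequently, for every $x\in\mathbf{A}_\Gamma^{\mathrm{alg}}$ one has $\pi_\omega(\pi(x))=UxU^{*}$, whence $\|\pi(x)\|_B=\|UxU^{*}\|=\|x\|$ because $\pi_\omega$ is faithful and the Fock representation is the defining one. Thus $a_{\mathbf{v}}\mapsto\pi(a_{\mathbf{v}})$ extends to an isometric $\ast$-homomorphism $\pi\colon\mathbf{A}_\Gamma\to B$ with $\pi|_{A_v}=\pi_v$; its range is closed and contains the generators of $B$, hence equals $B$, so $\pi$ is a $\ast$-isomorphism. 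The state identity $\omega\circ\pi=\omega_\Gamma$ follows from $U\Omega=\xi_\omega$ since both states are the corresponding vector states, and uniqueness is immediate because $\pi$ is prescribed on the generating subalgebras $A_v$ while $\mathbf{A}_\Gamma^{\mathrm{alg}}$ is dense.
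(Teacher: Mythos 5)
Your proof is correct and follows essentially the same route as the argument the paper relies on (it cites \cite[Proposition~2.12]{CaspersFima17} rather than reproving it): one builds a unitary between the Fock space $\mathcal{H}_\Gamma$ and the GNS space of $(B,\omega)$ that intertwines the two representations, with the key inner-product identity established by the same normal-form recursion on both sides using edge-commutation, the splitting into $\omega_v(\cdot)1$ plus a kernel part, and the vanishing of both states on reduced operators. The only nit is that with the paper's convention of inner products linear in the second variable, your displayed identity should read $\langle a_{\mathbf v}\Omega, b_{\mathbf w}\Omega\rangle=\omega\bigl(\pi(a_{\mathbf v})^{*}\pi(b_{\mathbf w})\bigr)$, which is equivalent to yours up to complex conjugation.
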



\section{Selflessness of graph product \texorpdfstring\Cs--algebras}\label{sec:Main-construction}

In this section, we establish Theorems~\ref{MainTheorem2} and~\ref{MainTheorem1}. Our argument partially relies on the construction of certain (universal) \Cs-algebras associated with reduced graph products of \Cs-algebras, a framework introduced and studied by the second author in \cite{Klisse25}. For completeness, we briefly recall this construction in Subsection~\ref{subsec:graph-product-construction}.


\subsection{\texorpdfstring\Cs--algebras from reduced graph products}\label{subsec:graph-product-construction}

Let $\Gamma$ be a finite, undirected, simplicial graph, and let $\mathbf{A}:=(A_{v})_{v\in V\Gamma}$ be a collection of unital \Cs-algebras, each equipped with a GNS-faithful state $\omega_{v}$. As in Subsection~\ref{GraphProductCAlgebras}, denote by $\mathcal{H}_{\Gamma}:=\mathbb{C}\Omega\oplus\bigoplus_{\mathbf{w}\in W_{\Gamma}\setminus\{e\}}\mathcal{H}_{\mathbf{w}}^{\circ}$ the associated graph product Hilbert space, and by $\star_{v,\Gamma}(A_{v},\omega_{v})$, or $\mathbf{A}_{\Gamma}$, the corresponding graph product \Cs-algebra.  
For each element $\mathbf{w}\in W_{\Gamma}$ (as defined in Subsection~\ref{subsec:Right-angled-Coxeter-groups}), let $Q_{\mathbf{w}}\in\mathcal{B}(\mathcal{H}_{\Gamma})$ denote the orthogonal projection onto the subspace $\bigoplus_{\mathbf{v}\in W_{\Gamma}\setminus\{e\}:\,\mathbf{w}\leq\mathbf{v}}\mathcal{H}_{\mathbf{v}}^{\circ}\subseteq\mathcal{H}_{\Gamma}$. We then define
\[
\mathfrak{A}(\mathbf{A},\Gamma)
:=C^{\ast}\!\left(\mathbf{A}_{\Gamma}\cup\{Q_{v}\mid v\in V\Gamma\}\right)
\subseteq\mathcal{B}(\mathcal{H}_{\Gamma}),
\]
as the \Cs-subalgebra of $\mathcal{B}(\mathcal{H}_{\Gamma})$ generated by $\mathbf{A}_{\Gamma}$, together with all projections $(Q_{v})_{v \in V\Gamma}$.  
Furthermore, let $\mathcal{D}(\mathbf{A},\Gamma)$ denote the \Cs-subalgebra of $\mathfrak{A}(\mathbf{A},\Gamma)$ consisting of all operators $x\in\mathfrak{A}(\mathbf{A},\Gamma)$ that are \emph{diagonal}, in the sense that
\[
x(\mathbb{C}\Omega)\subseteq\mathbb{C}\Omega
\quad\text{and}\quad
x(\mathcal{H}_{\mathbf{w}}^{\circ})\subseteq\mathcal{H}_{\mathbf{w}}^{\circ}
\quad\text{for every }\mathbf{w}\in W_{\Gamma}\setminus\{e\}.
\]
Slightly abusing notation, we denote by $\omega_{\Gamma}$ the restriction of the vacuum vector state to $\mathfrak{A}(\mathbf{A},\Gamma)$. As before, we suppress the $\ast$-embeddings $(\lambda_{v})_{v\in V\Gamma}$ and simply view each element $a\in A_{v}^{\circ}$, $v\in V\Gamma$, as an element of $\mathfrak{A}(\mathbf{A},\Gamma)$.

Following the terminology of \cite[Definition~2.6]{Klisse25}, for each $v\in V\Gamma$ and $a\in A_{v}$ we define:
\begin{itemize}[leftmargin=2em]
    \item the \emph{creation operator} associated with $a$ by    $a^{\dagger}:=Q_{v}aQ_{v}^{\perp}\in\mathfrak{A}(\mathbf{A},\Gamma)$,
    \item the \emph{diagonal operator} associated with $a$ by $\mathfrak{d}(a):=Q_{v}aQ_{v}\in\mathcal{D}(\mathbf{A},\Gamma)$,
    \item the \emph{annihilation operator} associated with $a$ by $((a^{*})^{\dagger})^{*}:=Q_{v}^{\perp}aQ_{v}\in\mathfrak{A}(\mathbf{A},\Gamma)$.
\end{itemize}

\begin{proposition}[see {\cite[Proposition~2.9]{Klisse25}}]
\label{DensityStatement}
Let $\Gamma$ be a finite, undirected, simplicial graph, and let $\mathbf{A}:=(A_{v})_{v\in V\Gamma}$ be a collection of unital \Cs-algebras with GNS-faithful states $(\omega_{v})_{v\in V\Gamma}$. Then the (dense) $\ast$-subalgebra of $\mathfrak{A}(\mathbf{A},\Gamma)$ generated by $\mathbf{A}_{\Gamma}$ and the projections $(Q_{v})_{v\in V\Gamma}$ coincides with
\begin{equation}
\mathrm{Span}\!\left\{
(a_{1}^{\dagger}\cdots a_{k}^{\dagger})\,d\,(b_{1}^{\dagger}\cdots b_{l}^{\dagger})^{*}
\ \middle|\
\begin{array}{l}
k,l\in\mathbb{N},\ (u_{1},\dots,u_{k}),(v_{1},\dots,v_{l})\in\mathcal{W}_{\mathrm{red}},\\
a_{i}\in A_{u_{i}}^{\circ},\ b_{j}\in A_{v_{j}}^{\circ},\ d\in\mathcal{D}_{0}(\mathbf{A},\Gamma)
\end{array}
\right\}.
\label{eq:DensitySet}
\end{equation}
Here $\mathcal{D}_{0}(\mathbf{A},\Gamma)\subseteq\mathcal{D}(\mathbf{A},\Gamma)$ denotes the set consisting of $1$ and all finite products $\mathfrak{d}(c_{1})\cdots\mathfrak{d}(c_{n})$ with $c_{i}\in A_{w_{i}}$, where $\{w_{1},\dots,w_{n}\}\subseteq V\Gamma$ forms a clique.
\end{proposition}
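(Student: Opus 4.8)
The plan is to prove the two inclusions separately; the only real content is that the displayed span, which I will call $\mathcal{S}$, is already a $*$-subalgebra of $\mathfrak{A}(\mathbf{A},\Gamma)$ containing the generators. The inclusion of $\mathcal{S}$ into the $*$-algebra generated by $\mathbf{A}_{\Gamma}$ and $(Q_{v})_{v\in V\Gamma}$ is immediate: each building block $a^{\dagger}=Q_{v}aQ_{v}^{\perp}$, $\mathfrak{d}(a)=Q_{v}aQ_{v}$, and $((a^{*})^{\dagger})^{*}=Q_{v}^{\perp}aQ_{v}$ is a product of an element of $\mathbf{A}_{\Gamma}$ with $Q_{v}$ and $Q_{v}^{\perp}=1-Q_{v}$, hence lies in that $*$-algebra, and so does any product and linear combination thereof.

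For the reverse inclusion I would first record that $\mathcal{S}$ is self-adjoint: since $\mathfrak{d}(c)^{*}=\mathfrak{d}(c^{*})$ and the clique condition defining $\mathcal{D}_{0}(\mathbf{A},\Gamma)$ is symmetric, $\mathcal{D}_{0}$ is $*$-closed, and the adjoint of $(a_{1}^{\dagger}\cdots a_{k}^{\dagger})\,d\,(b_{1}^{\dagger}\cdots b_{l}^{\dagger})^{*}$ has the same form with the two outer words exchanged. Next I would check that $\mathcal{S}$ contains the generators. The projections are handled by $Q_{v}=\mathfrak{d}(1_{A_{v}})\in\mathcal{D}_{0}(\mathbf{A},\Gamma)$, and a direct computation on $\mathcal{H}_{\Gamma}$ using the formula for $\lambda_{v}$ gives $Q_{v}^{\perp}aQ_{v}^{\perp}=\omega_{v}(a)Q_{v}^{\perp}$ for $a\in A_{v}$. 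The four-corner decomposition
\[
a = a^{\dagger} + \mathfrak{d}(a) + ((a^{*})^{\dagger})^{*} + \omega_{v}(a)(1-Q_{v})
\]
then exhibits every $a\in A_{v}$ as an element of $\mathcal{S}$, after reducing the creation and annihilation corners to $A_{v}^{\circ}$ via $1^{\dagger}=Q_{v}Q_{v}^{\perp}=0$.

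The crux is closure under multiplication. By bilinearity it suffices to multiply two monomials of the displayed type, and the only nontrivial interaction occurs at the middle junction $(b_{1}^{\dagger}\cdots b_{l}^{\dagger})^{*}(a_{1}^{\prime\dagger}\cdots a_{k^{\prime}}^{\prime\dagger})$ of annihilation operators followed by creation operators, flanked by the two diagonal factors. I would reduce this to a short list of elementary relations and then induct on the total number of creation and annihilation letters: (i) $a^{\dagger}b^{\dagger}=Q_{v}a(Q_{v}^{\perp}Q_{v})bQ_{v}^{\perp}=0$ when $a,b\in A_{v}$ share a vertex, while commuting vertices permit reordering of creation operators using that $Q_{v}$ and $Q_{v^{\prime}}$ commute for $(v,v^{\prime})\in E\Gamma$; (ii) the contraction $(b^{\dagger})^{*}a^{\dagger}=\langle a^{\circ}\xi_{v},b^{\circ}\xi_{v}\rangle(1-Q_{v})$ for a common vertex $v$, the basic mechanism by which an annihilation meeting a creation collapses to a diagonal term; and (iii) the absorption rules $\mathfrak{d}(c)a^{\dagger}=(ca^{\circ})^{\dagger}$ and $\mathfrak{d}(c)\mathfrak{d}(c^{\prime})=\mathfrak{d}(cc^{\prime})-c^{\dagger}((c^{\prime*})^{\dagger})^{*}$ for a common vertex, together with their commuting-vertex counterparts, which let diagonal factors be pushed into the creation words or combined. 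Iterating these moves rewrites any product as a linear combination of standard monomials.

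The main obstacle I anticipate is the graph-combinatorial bookkeeping during this rewriting. One must track that the words $(u_{1},\dots,u_{k})$ produced by concatenation and cancellation remain in $\mathcal{W}_{\mathrm{red}}$ — which is exactly where the normal-form theory for reduced words in $W_{\Gamma}$ from Subsection~\ref{subsec:Graphs} enters — and, more delicately, that the diagonal operators surviving the contractions can be reorganised into a single factor in $\mathcal{D}_{0}(\mathbf{A},\Gamma)$, i.e.\ a product over an honest clique. Since diagonal factors on \emph{non-adjacent} vertices do not multiply inside $\mathcal{D}_{0}$, relation (iii) must be invoked to split such products back into creation–annihilation pairs, which then re-enter the outer words; the three layers are therefore genuinely intertwined rather than cleanly separated. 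Controlling this feedback, and verifying that it terminates in the standard form, is where the real work lies.
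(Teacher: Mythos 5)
The paper does not actually prove this proposition: it is imported from \cite[Proposition~2.9]{Klisse25}, so there is no in-paper argument to compare against. Your overall strategy --- show that the displayed span $\mathcal{S}$ is a self-adjoint subspace containing the generators and closed under multiplication --- is the natural one and is the route taken in the cited reference, where the elementary relations you list are the content of \cite[Lemma~2.8]{Klisse25}. Your preparatory steps check out: $Q_{v}=\mathfrak{d}(1_{A_{v}})\in\mathcal{D}_{0}(\mathbf{A},\Gamma)$, the identity $Q_{v}^{\perp}aQ_{v}^{\perp}=\omega_{v}(a)Q_{v}^{\perp}$, the four-corner decomposition of $a\in A_{v}$, the vanishing $a^{\dagger}b^{\dagger}=0$ at a common vertex, the contraction $(b^{\dagger})^{*}a^{\dagger}=\omega_{v}(a^{*}b)Q_{v}^{\perp}$, and the absorption identities in your item (iii) are all correct (up to inserting $\circ$'s where the algebra elements are not already in the kernel of the state).

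Nevertheless, as a proof the proposal has a genuine gap, and you flag it yourself: closure of $\mathcal{S}$ under multiplication --- the entire content of the statement --- is only asserted to follow from a rewriting procedure whose termination and output are not verified. Three specific points are missing. (a) A same-vertex contraction creates the factor $Q_{v}^{\perp}=1-Q_{v}$ in the \emph{middle} of the word; disposing of the resulting $Q_{v}$ requires the further relations $Q_{v}Q_{w}=0$ for distinct non-adjacent $v,w$ (because no element of $W_{\Gamma}$ starts with two non-commuting generators) and $Q_{v}\lambda_{w}(A_{w})=\lambda_{w}(A_{w})Q_{v}$ for $(v,w)\in E\Gamma$, neither of which you state or justify; the first is also what forces the surviving diagonal factors to sit over a clique, via $\mathfrak{d}(c)\mathfrak{d}(c')=0$ for distinct non-adjacent vertices. (b) Your proposed induction ``on the total number of creation and annihilation letters'' does not work as stated: relation (iii) replaces two diagonal letters by a creation--annihilation pair, \emph{increasing} that count, and this pair must then migrate through the remaining diagonal factors and into the outer words, possibly triggering new contractions. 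A correct argument needs a more refined decreasing quantity (or a two-stage organisation: first resolve all annihilation--creation adjacencies, then normalise the diagonal block), and you give neither. (c) The claim that the outer words stay in $\mathcal{W}_{\mathrm{red}}$ needs the observation that a non-reduced creation word can be shuffled until two letters at the same vertex become adjacent, whereupon the product vanishes by (i); this is gestured at but not argued. None of these points is false --- they are exactly what \cite[Lemma~2.8]{Klisse25} supplies --- but without them the proposal is a plan rather than a proof.
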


We call a non-zero operator of the form $x:=(a_{1}^{\dagger}\cdots a_{k}^{\dagger})\,d\,(b_{1}^{\dagger}\cdots b_{l}^{\dagger})^{\ast}$, where $k,l\in\mathbb{N}$ and $(u_{1},\ldots,u_{k}),(v_{1},\ldots,v_{l})\in\mathcal{W}_{\mathrm{red}}$, 
$a_{i}\in A_{u_{i}}^{\circ}$, $b_{j}\in A_{v_{j}}^{\circ}$ for $1\leq i\leq k$, $1\leq j\leq l$, 
and $d\in\mathcal{D}_{0}(\mathbf{A},\Gamma)$, \emph{elementary}; the collection of all such operators is denoted by $\mathcal{E}(\mathbf{A},\Gamma)$. By Proposition~\ref{DensityStatement}, the linear span of $\mathcal{E}(\mathbf{A},\Gamma)$ is dense in $\mathfrak{A}(\mathbf{A},\Gamma)$.  
Moreover, by \cite[Lemma~2.10]{Klisse25}, to each $x\in\mathcal{E}(\mathbf{A},\Gamma)$ we can associate a well-defined group element $\Sigma(x):=(u_{1}\cdots u_{k})(v_{1}\cdots v_{l})^{-1}\in W_{\Gamma}$,
which depends only on the operator $x$ itself and not on the particular choice of the elements $a_{i}$, $b_{j}$, or $d$.  
This element $\Sigma(x)$ is referred to as the \emph{signature} of $x$. Note that $x \mathcal{H}_{\mathbf{w}}^\circ \subseteq \mathcal{H}_{\Sigma(x)\mathbf{w}}^\circ $ for all $x\in \mathcal{E}(\mathbf{A},\Gamma)$, $\mathbf{w} \in W_\Gamma$.

An explicit decomposition of reduced operators in $\mathbf{A}_\Gamma$ into linear combinations of elementary
operators is provided in \cite[Proposition 2.6]{CaspersKlisseLarsen21}.


\subsection{The case \texorpdfstring{$\#V\Gamma>2$}{-}}

\label{subsec:selflessness-graphs}

In this subsection, we establish the following criterion for the complete selflessness of reduced graph product \Cs-algebras.

\begin{theorem}[{Theorem \ref{MainTheorem1}}] \label{Selflessness1} Let $\Gamma$ be an undirected, simplicial graph with $\#V\Gamma\geq3$ and let $\mathbf{A} = (A_{v})_{v \in V\Gamma}$ be a family of unital \Cs-algebras, each endowed with a GNS-faithful state $\omega_{v}$.  
Assume that the complement graph $\Gamma^{c}$ is connected and that every vertex $v \in V\Gamma$ admits a unitary 
$u_{v} \in A_{v}^{\omega_{v}} \cap \ker(\omega_{v})$.  
Then the reduced graph product \Cs-algebra $(\mathbf{A}_{\Gamma},\omega_\Gamma)$ is completely selfless. \end{theorem}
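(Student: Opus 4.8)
The plan is to verify the defining diagram for selflessness directly by exhibiting the embedding $\theta$. Writing $A := \mathbf{A}_\Gamma$ and $\rho := \omega_\Gamma$, it suffices to construct a unital, state-preserving $\ast$-homomorphism $\theta \colon (A,\rho)\star(A,\rho) \to (A^{\mathcal U},\rho^{\mathcal U})$ whose restriction to the first free factor is the diagonal embedding $\delta$. Indeed, since $\rho$ is GNS-faithful the free product state $\rho\star\rho$ is GNS-faithful, so any state-preserving $\ast$-homomorphism out of $(A,\rho)\star(A,\rho)$ is automatically isometric, hence an embedding; and the commutativity $\theta\circ\iota=\delta$ will hold by construction. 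Thus the entire content is to produce a second, freely independent copy of $A$ inside the ultrapower that is compatible with the diagonal first copy.

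To build the second copy I would conjugate by a carefully chosen sequence of unitaries. Concretely, fix a sequence $(w_n)_n$ of unitaries in $A$ and set $\theta(a\otimes 1)=\delta(a)=[(a)_n]$ on the first factor and $\theta(1\otimes b)=[(w_n b w_n^\ast)_n]$ on the second. For each $n$ the map $b\mapsto w_n b w_n^\ast$ is a $\ast$-automorphism, so $\theta$ is a well-defined unital $\ast$-homomorphism on the algebraic free product. The decisive point in the choice of $w_n$ is that each $w_n$ should lie in the centralizer $A^{\rho}$ of the graph product state; I would take $w_n$ to be a product of the given centralizer unitaries $u_v\in A_v^{\omega_v}\cap\ker(\omega_v)$ along a walk in $\Gamma^c$, so that $w_n$ is a reduced operator whose type is a reduced word in $W_\Gamma$ of length tending to infinity. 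Since products of centralizer unitaries remain in $A^{\rho}$, conjugation by $w_n$ preserves $\rho$; consequently $\rho^{\mathcal U}(\theta(1\otimes b))=\lim_{\mathcal U}\rho(w_n b w_n^\ast)=\rho(b)$, so $\theta$ preserves the state on each factor.

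The heart of the argument is to show that $\delta(A)$ and $\theta(1\otimes A)$ are \emph{freely independent} with respect to $\rho^{\mathcal U}$, which forces $\rho^{\mathcal U}\circ\theta=\rho\star\rho$ and completes the proof. By the usual reduction this amounts to showing that every alternating product of $\rho$-centered elements has vanishing $\rho^{\mathcal U}$. Conjugating such a product by $w_n^\ast$, which preserves $\rho$, rewrites a typical moment as $\lim_{\mathcal U}\rho\big((w_n^\ast a_1 w_n)\,b_1\,(w_n^\ast a_2 w_n)\,b_2\cdots\big)$ with all $a_i,b_i\in\ker(\rho)$. I would then analyze this inside the ambient \Cs-algebra $\mathfrak{A}(\mathbf{A},\Gamma)$ of Subsection~\ref{subsec:graph-product-construction}: decomposing $a_i$ and $b_i$ into elementary operators and tracking the signatures $\Sigma(\,\cdot\,)$ under multiplication shows that, once $|w_n|$ exceeds the total length of the fixed data $b_1,\dots,b_k$, the conjugates $w_n^\ast a_i w_n$ push the product onto reduced words of nonzero type with no cancellation against the $b_i$. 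Hence the product is a reduced operator of type $\neq e$, so $\rho$ of it vanishes identically for $n$ large, and the ultralimit is $0$.

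The main obstacle I anticipate is precisely this no-cancellation / asymptotic-freeness step, and it is where the hypothesis that $\Gamma^c$ be connected is essential: one must choose the single freeing sequence $w_n$ so that its conjugation action spreads the elements of \emph{every} vertex algebra simultaneously onto long words. If $\Gamma^c$ were disconnected, $A$ would split as a tensor product $\mathbf{A}_{\Gamma_1}\otimes\mathbf{A}_{\Gamma_2}$ and any word built from one side would commute with the other, so its conjugation could never free the full algebra, reflecting that such tensor products are not selfless. Making the combinatorics of the signature map precise, and verifying that a walk through $\Gamma^c$ visiting all vertices yields a $w_n$ that is genuinely non-commuting with each $A_v$ in the requisite sense, is the technical core; the centralizer condition on the $u_v$ and the involutive structure of $W_\Gamma$ are what guarantee the needed length growth and state invariance.
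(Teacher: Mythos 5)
Your strategy is genuinely different from the paper's and, as it stands, has a gap that is not merely technical. The decisive problem is the passage from free independence of moments to an embedding of the \emph{reduced} free product. What your construction produces is a unital $\ast$-homomorphism from the \emph{algebraic} free product into $A^{\mathcal U}$ whose moments with respect to $\rho^{\mathcal U}$ agree with $\rho\star\rho$. This only tells you that the GNS representation of $\rho^{\mathcal U}$ restricted to $C:=C^{\ast}(\delta(A),\theta(1\otimes A))$ is the free product representation, i.e.\ it yields a surjection $C\twoheadrightarrow(A,\rho)\star(A,\rho)$ --- the wrong direction. To reverse it you would need $\rho^{\mathcal U}$ to be GNS-faithful on $C$, which is not automatic (ultrapower states are generally far from GNS-faithful on subalgebras), and your remark that a state-preserving map out of $(A,\rho)\star(A,\rho)$ is automatically isometric is circular: it presupposes that the map is already defined on the reduced free product. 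The paper avoids this entirely: instead of inner conjugation it builds operators $T_{n,\mathcal F}\in\mathcal B(\mathcal H_\Gamma)$, cut by the Fock-space projections $\mathbf Q(\cdot)$ and hence \emph{not} lying in $\mathbf A_\Gamma$, which satisfy the exact Toeplitz-type relations $\|T_{n,\mathcal F}^{\ast}T_{n,\mathcal F}-1\|\le \tfrac1n$ and $T_{n,\mathcal F}^{\ast}aT_{n,\mathcal F}=0$; Ozawa's universal property then gives a $\ast$-homomorphism out of $(\mathbf A_\Gamma,\omega_\Gamma)\star(\mathcal T,\omega)$ whose injectivity comes for free from the \emph{simplicity} of that free product, and Robert's criterion reduces selflessness to the resulting embedding of $(\mathbf A_\Gamma,\omega_\Gamma)\star(C^{\ast}(T+T^{\ast}),\tau)$ into $\mathbf A_\Gamma^{\mathcal U}$. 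Some substitute for this injectivity mechanism is indispensable in your approach.

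A second, smaller gap: the asymptotic-freeness computation is more delicate than ``take $|w_n|\to\infty$ along a walk covering $\Gamma^c$''. For a reduced operator $a$ the conjugate $w_n^{\ast}aw_n$ is not reduced of type $\mathbf g_n^{-1}\mathbf v\mathbf g_n$; cancellations at the junctions produce diagonal terms $\mathfrak d(c_1)\cdots\mathfrak d(c_l)$ supported on cliques, and killing these requires a carefully engineered walk (this is exactly the role of the long repeated prefix and the three-case analysis in the proof of Proposition~\ref{DifferentPaths-1}, and of the identities in Lemma~\ref{MultiplicationIdentities-1}). Moreover, the paper averages over $n$ \emph{distinct} walks with common endpoints precisely to obtain the quantitative isometry estimate; a single conjugating sequence gives no such norm control. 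Your combinatorial intuition (connectedness of $\Gamma^c$, centralizer unitaries, length growth) matches the paper's, but the two structural ingredients above --- the injectivity mechanism and the exact rather than asymptotic Toeplitz relations --- are the actual content of the proof and are missing from your outline.
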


The proof of Theorem~\ref{Selflessness1} requires some preparation.

The following notion is adapted from \cite{Klisse23-2} and \cite{Klisse25}, where it was used in the study of the simplicity of reduced graph product \Cs-algebras. Implicitly, the concept was already employed in \cite{CaspersKlisseLarsen21} and \cite{Klisse23-1}.

\begin{definition} Let $\Gamma$ be an undirected, simplicial graph. A \emph{walk} in $\Gamma$ is a finite sequence of vertices $(v_{1},\dots,v_{n})\in V\Gamma\times\ldots\times V\Gamma$ with $(v_{i},v_{i+1})\in E\Gamma$ for all $i=1,\dots,n-1$. A walk is said to be \emph{closed} if $(v_{1},v_{n})\in E\Gamma$, and it is said to \emph{cover the whole graph} if $\{v_{1},\dots,v_{n}\}=V\Gamma$. \end{definition}

Note that a finite, undirected, simplicial graph $\Gamma$ admits a closed walk covering the whole graph if and only if it is connected.

\begin{lemma} \label{PathAbundance} Let $\Gamma$ be a finite, undirected, simplicial, connected graph with $\#V\Gamma\geq3$. Let $x,y\in V\Gamma$ with $(x,y)\in E\Gamma$, and let $n\in\mathbb{N}$. Then there exists an integer $L$ such that the number of distinct closed walks $(v_{1},\ldots,v_{L})\in V\Gamma\times\ldots\times V\Gamma$ satisfying $v_{1}=x$, $v_{L}=y$, and covering the whole graph is greater than $n$. \end{lemma}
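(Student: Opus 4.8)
The plan is to produce, for a suitable length $L$, at least $2^{k}$ distinct covering closed walks from $x$ to $y$, where $k$ is chosen so that $2^{k} > n$; all these walks will automatically share the same length because each is obtained from a single fixed base walk by inserting exactly $2k$ extra steps.

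\textbf{Step 1 (a base covering walk).} First I would fix one walk $W_{0} = (v_{1}, \ldots, v_{L_{0}})$ with $v_{1} = x$, $v_{L_{0}} = y$, consecutive vertices adjacent, and $\{v_{1}, \ldots, v_{L_{0}}\} = V\Gamma$. Such a walk exists because $\Gamma$ is connected: taking a spanning tree and traversing it by a depth-first (Euler) tour based at $x$ yields a walk from $x$ back to $x$ visiting every vertex, to which one appends the single step $x \to y$ (legitimate since $(x,y) \in E\Gamma$). Since $\#V\Gamma \geq 3$, the walk $W_{0}$ visits at least three distinct vertices, so $L_{0} \geq 3$ and $W_{0}$ has at least one interior position.

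\textbf{Step 2 (an interior vertex with two distinct neighbours).} Next I would locate an index $i$ with $2 \leq i \leq L_{0}-1$ such that $v_{i-1} \neq v_{i+1}$. If no such index existed, then $v_{i-1} = v_{i+1}$ for every interior $i$, which forces all odd-indexed vertices to equal $v_{1}$ and all even-indexed vertices to equal $v_{2}$; the walk would then visit only two vertices, contradicting $L_{0} \geq 3$ together with the covering property. Thus $z := v_{i}$ has two \emph{distinct} neighbours $a := v_{i-1}$ and $b := v_{i+1}$.

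\textbf{Step 3 (bouncing to generate many walks).} For each binary string $(c_{1}, \ldots, c_{k}) \in \{a,b\}^{k}$, I would insert a ``bounce'' block at position $i$, replacing the local pattern $\ldots, a, z, b, \ldots$ by
\[
\ldots,\ a,\ z,\ c_{1},\ z,\ c_{2},\ z,\ \ldots,\ c_{k},\ z,\ b,\ \ldots,
\]
which is a legitimate walk since every inserted step runs along one of the edges $(z,a)$ or $(z,b)$. Each insertion adds exactly $2k$ vertices, so all resulting walks share the common length $L := L_{0} + 2k$; they all still start at $x$, end at $y$ (the insertion is interior), cover $V\Gamma$ (only the already-present vertices $z,a,b$ are added), and are closed because $(x,y) \in E\Gamma$. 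Distinct strings yield distinct walks: if two strings first disagree at index $j$, the corresponding walks carry different vertices at that position while agreeing elsewhere. Hence there are $2^{k}$ such walks, and choosing $k$ with $2^{k} > n$ together with $L = L_{0} + 2k$ completes the argument.

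The only genuinely delicate points are guaranteeing that \emph{all} produced walks have exactly the same length---handled by fixing the number $k$ of inserted bounces rather than their pattern---and ensuring the $2^{k}$ walks are pairwise distinct, for which the binary ``via $a$ or via $b$'' encoding is the key device. The existence of the interior vertex $z$ with two distinct neighbours in Step 2 is precisely what makes the binary choice available, and this is where the hypothesis $\#V\Gamma \geq 3$ is used.
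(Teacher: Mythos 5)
Your proposal is correct and rests on essentially the same idea as the paper's proof: pad a fixed covering closed walk from $x$ to $y$ with a fixed total number of back-and-forth steps at a vertex admitting two distinct neighbours $a,b$, so that the bounce pattern distinguishes the walks while the common length is preserved. The only difference is cosmetic --- the paper encodes the choices by a split $r_1+r_2=K$ of a run of bounces appended via an out-and-back excursion (yielding $K+1$ walks), whereas you encode them by binary strings in $\{a,b\}^{k}$ at an interior vertex of the base walk (yielding $2^{k}$).
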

\begin{proof}
Let $(u_{1},\ldots,u_{k})$ be any closed walk in $\Gamma$ with $u_{1}=x$, $u_{2}=y$, that covers the whole graph, and let $(v_{1},\ldots,v_{l})$ be a walk in $\Gamma$ starting at $v_{1}=x$ with $(v_{l},a),(v_{l},b)\in E\Gamma$ for distinct vertices $a,b\in V\Gamma$; such a walk exists because $\#\Gamma \geq 3$ implies that $\Gamma$ admits a vertex of degree at least 2. Choose an integer $K>n-1$ and set $L:=2(K+k+l-1)\in\mathbb{N}$. There are at least $n$ distinct walks of the form 
\[
(u_{1},\ldots,u_{k},v_{1},\ldots,v_{l},\underbrace{a,v_{l},\ldots,a,v_{l}}_{r_{1}\text{ times}},\underbrace{b,v_{l},\ldots,b,v_{l}}_{r_{2}\text{ times}},v_{l-1},\ldots,v_{1},u_{k},\ldots,u_{2}),
\]
with $r_{1}+r_{2}=K$. All these walks are closed, start in $x=u_{1}$, end in $y=u_{2}$, and cover the entire graph, which proves the claim. 
\end{proof}
Within the setting of Theorem~\ref{Selflessness1}, every reduced operator $a=a_{1}\cdots a_{n}\in\mathbf{A}_{\Gamma}$ of type $\mathbf{w}=s_{1}\cdots s_{n}\in W_{\Gamma}$ with $s_{i}\in S_{\Gamma}$, $a_{i}\in A_{s_{i}}^{\circ}$ naturally determines an orthogonal projection onto the direct sum of all subspaces of $\mathcal{H}_{\Gamma}$ of the form 
\[
\mathbb{C}(a\Omega)\otimes\mathcal{H}_{\mathbf{\mathbf{u}}}^{\circ}:=\left(\mathbb{C}(a_{1}\xi_{s_{1}})\otimes\cdots\otimes\mathbb{C}(a_{n}\xi_{s_{n}})\right)\otimes\mathcal{H}_{\mathbf{\mathbf{u}}}^{\circ},
\]
where $\mathbf{u}\in W_{\Gamma}$ is a group element with $|\mathbf{w}\mathbf{u}|=|\mathbf{w}|+|\mathbf{u}|$; we denote this projection by $\mathbf{Q}(a\Omega)$.

\begin{proposition} \label{DifferentPaths-1} Let $\Gamma$ be a finite, undirected, simplicial graph with $\#V\Gamma\geq3$, and let $\mathbf{A}:=(A_{v})_{v\in V\Gamma}$ be a family of unital \Cs-algebras, each endowed with a GNS-faithful state $\omega_{v}$. Assume that the complement $\Gamma^{c}$ is connected and that every vertex $v\in V\Gamma$ admits a unitary $u_{v}\in A_{v}^{\omega_{v}}\cap\ker(\omega_{v})$. Then, for every finite subset $\mathcal{F}\subseteq W_{\Gamma}\setminus\{e\}$, every vertex $v\in V\Gamma$, and every natural number $n\in\mathbb{N}$, there exist distinct closed walks 
\[
(v_{1}^{(1)},\ldots,v_{L}^{(1)}),\ldots,(v_{1}^{(n)},\ldots,v_{L}^{(n)})\in V\Gamma\times\ldots\times V\Gamma
\]
in $\Gamma^{c}$ with 
\[
v=v_{1}^{(1)}=\cdots=v_{1}^{(n)}\quad\text{and}\quad v_{L}^{(1)}=\cdots=v_{L}^{(n)},
\]
which cover the whole graph and satisfy $\mathbf{Q}(u_{\mathbf{g}_{i}}\Omega)\,a\,\mathbf{Q}(u_{\mathbf{g}_{j}}\Omega)=0$ for all $1\leq i,j\leq n$  and all reduced operators $a \in\mathbf{A}_{\Gamma}$ of type $\mathbf{w}\in\mathcal{F}$, where $\mathbf{g}_{i}:=v_{1}^{(i)}\cdots v_{L}^{(i)}$ and $\mathbf{g}_{j}:=v_{1}^{(j)}\cdots v_{L}^{(j)}$. \end{proposition}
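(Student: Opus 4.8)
The plan is to produce the walks from Lemma~\ref{PathAbundance} and then thin them out by a pigeonhole argument, using the centralizer hypothesis to control the action of reduced operators on the relevant cylinder vectors. Set $N:=\max_{\mathbf{w}\in\mathcal{F}}|\mathbf{w}|$ and fix a neighbour $y$ of $x$ in $\Gamma^{c}$ (which exists since $\Gamma^{c}$ is connected with at least two vertices). Applying Lemma~\ref{PathAbundance} to the connected graph $\Gamma^{c}$ yields, for a suitable length $L$, an arbitrarily large family of $M$ distinct closed walks in $\Gamma^{c}$ from $x$ to $y$ of length $L$ that cover the whole graph. First I would record that any such walk $\mathbf{g}=v_{1}\cdots v_{L}$ is automatically a reduced, shuffle-rigid word of $W_{\Gamma}$: consecutive vertices are adjacent in $\Gamma^{c}$, hence the corresponding generators do not commute, so no shuffle and no cancellation is possible. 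Consequently distinct walks give distinct group elements, $u_{\mathbf{g}}\Omega=u_{v_{1}}\xi_{v_{1}}\otimes\cdots\otimes u_{v_{L}}\xi_{v_{L}}$ is a unit vector in $\mathcal{H}_{\mathbf{g}}^{\circ}$, and only the first letter $x$ is accessible from the left (i.e.\ $s\le_{R}\mathbf{g}$ forces $s=x$), so a reduced operator can merge with $\mathbf{g}$ only at its front.

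The heart of the argument is a local analysis of how a reduced operator $a$ of type $\mathbf{w}\in\mathcal{F}$ acts on a cylinder vector $u_{\mathbf{g}_{j}}\Omega\otimes\zeta$ in the range of $\mathbf{Q}(u_{\mathbf{g}_{j}}\Omega)$. Expanding $a$ letter by letter, each resulting summand is obtained by prepending some letters of $\mathbf{w}$ and by \emph{merging} others against the front of $\mathbf{g}_{j}$. I would show that every summand arising from a nontrivial merge either shortens or alters the underlying word so that $\mathbf{g}_{i}$ is no longer a left-divisor, or else replaces one of the slot vectors $u_{v}\xi_{v}$ by a vector orthogonal to it. The latter is exactly where the centralizer hypothesis is used: for $b\in A_{v}^{\circ}$ one computes $\langle (b\,u_{v}\xi_{v})^{\circ},u_{v}\xi_{v}\rangle=\omega_{v}(u_{v}^{*}b\,u_{v})=\omega_{v}(b)=0$, using $u_{v}\in A_{v}^{\omega_{v}}\cap\ker(\omega_{v})$. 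Hence no merge term can contribute to any component $\mathbf{Q}(u_{\mathbf{g}_{i}}\Omega)$, whose range consists precisely of vectors whose leading $\mathbf{g}_{i}$-block equals $u_{\mathbf{g}_{i}}\Omega$.

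It therefore remains to treat the single non-merging summand $a\Omega\otimes u_{\mathbf{g}_{j}}\Omega\otimes\zeta$, which lies in $\mathcal{H}_{\mathbf{w}\mathbf{g}_{j}\mathbf{u}}^{\circ}$ and contributes to $\mathbf{Q}(u_{\mathbf{g}_{i}}\Omega)$ only when $\mathbf{g}_{i}\le_{R}\mathbf{w}\mathbf{g}_{j}$ with the induced $\mathbf{g}_{i}$-prefix involving no letter of $\mathbf{w}$. Since $a$ has length at most $N$, this rigid relation forces $\mathbf{g}_{i}$ and $\mathbf{g}_{j}$ to agree outside their first $O(N)$ letters; as $|\mathbf{g}_{i}|=|\mathbf{g}_{j}|=L$, each $\mathbf{g}_{j}$ is thereby \emph{linked} (for some $\mathbf{w}\in\mathcal{F}$) to at most a constant $C=C(N,\#V\Gamma)$ of the others, a bound independent of $L$. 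For the diagonal case $i=j$, the relation reduces to $\mathbf{g}\le_{R}\mathbf{w}\mathbf{g}$, i.e.\ $\mathbf{w}$ centralizes $\mathbf{g}$; I would exclude this by choosing the walks aperiodic (not proper powers) and invoking that a full-support element of $W_{\Gamma}$ with $\Gamma^{c}$ connected has no nontrivial centralizing element of length $\le N$ once $L>N$, so that $\mathbf{w}=e\notin\mathcal{F}$.

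Finally, I would extract the walks by pigeonhole: on the family of $M$ walks form the graph whose edges join linked pairs; by the previous paragraph it has no loops and maximum degree at most $C$, hence an independent set of size at least $M/(C+1)$. Taking $M\ge n(C+1)$ in Lemma~\ref{PathAbundance} (enlarging $L$ if necessary to also guarantee that enough of the walks are aperiodic and of full support) produces $n$ pairwise non-linked walks $\mathbf{g}_{1},\dots,\mathbf{g}_{n}$ with common initial vertex $x$, common terminal vertex, covering the whole graph, and satisfying $\mathbf{Q}(u_{\mathbf{g}_{i}}\Omega)\,a\,\mathbf{Q}(u_{\mathbf{g}_{j}}\Omega)=0$ for all $i,j$ and all reduced $a$ of type $\mathbf{w}\in\mathcal{F}$. \textbf{The main obstacle} is the operator-theoretic bookkeeping of the merging process that simultaneously yields the centralizer-based vanishing and the locality (linking) bound, together with the right-angled Coxeter input guaranteeing trivial short centralizers for the chosen walks.
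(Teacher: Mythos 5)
Your key analytic ingredient coincides with the paper's: for $b\in A_{v}^{\circ}$ the centralizer hypothesis gives $(bu_{v}\xi_{v})^{\circ}\perp u_{v}\xi_{v}$, which kills the terms in which a letter of $a$ merges with a slot of $u_{\mathbf{g}_{j}}\Omega$ without fully absorbing it (the paper's case $l=1$ in the elementary-operator decomposition of $a$). However, your claimed dichotomy for merge terms --- that each one either destroys left-divisibility by $\mathbf{g}_{i}$ or produces an orthogonal slot --- is false, and this is a genuine gap. When a letter $a_{p}\in A_{v}^{\circ}$ \emph{fully absorbs} the slot $u_{v}\xi_{v}$ (the annihilation branch of the action of $\lambda_{v}$), the output is a scalar of the form $\langle a_{p}u_{v}\xi_{v},\xi_{v}\rangle$ times the shortened vector, and this scalar need not vanish (take $a_{p}=u_{v}^{\ast}$). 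After $m\geq 1$ such absorptions the remaining $k=|\mathbf{w}|-m$ letters of $a$ are prepended as slots $a_{1}\xi_{s_{1}}\otimes\cdots\otimes a_{k}\xi_{s_{k}}$, which are not orthogonal to $u_{v_{1}^{(i)}}\xi_{v_{1}^{(i)}}\otimes\cdots$ in general; for $i=j$ and $k=m$ the underlying word is again $\mathbf{g}_{i}\mathbf{u}$, so $\mathbf{g}_{i}$ \emph{is} a left divisor and no slot orthogonality is available. These terms (the paper's case $l=0$ with $m\geq 1$) are exactly the ones your proposal does not control: they vanish for purely word-combinatorial reasons, namely that nontriviality would force $s_{k}=s_{k+1}$ in $\mathbf{w}=s_{1}\cdots s_{r}$ (when $k=m$), contradicting reducedness, or a short-period periodicity of the walk (when $k\neq m$). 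The paper engineers this by prepending to every walk a common alternating prefix consisting of $R$ repetitions of a fixed edge of $\Gamma^{c}$ with $R>\max_{\mathbf{w}\in\mathcal{F}}|\mathbf{w}|$, so that both the creation prefix $s_{1}\cdots s_{k}$ and the annihilation prefix $s_{r}\cdots s_{k+1}$ of $\mathbf{w}$ are forced into the same $2$-periodic pattern.

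Your pigeonhole/independent-set strategy also cannot repair the diagonal case: the proposition demands $\mathbf{Q}(u_{\mathbf{g}_{i}}\Omega)\,a\,\mathbf{Q}(u_{\mathbf{g}_{i}}\Omega)=0$ for every selected walk, so thinning a family by removing linked pairs does not help if each individual walk fails; moreover the obstruction there is not that ``$\mathbf{w}$ centralizes $\mathbf{g}$'' (the relation $\mathbf{g}\le_{R}\mathbf{w}\mathbf{g}$ is a prefix condition, not a commutation) but the periodicity condition above, which the highly periodic walks produced by Lemma~\ref{PathAbundance} do not obviously avoid. You also omit the case where the diagonal part of an elementary operator is a product over a clique of size $\geq 2$, which the paper dispatches using that consecutive walk letters are non-adjacent in $\Gamma$. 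The missing idea is the paper's explicit construction: take essentially your walks but force them all to share a long common alternating prefix, after which no pigeonhole is needed and all cases reduce to reducedness of $\mathbf{w}$, the centralizer computation, or the clique/anti-clique clash.
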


\begin{proof} By Lemma~\ref{PathAbundance}, we can choose distinct closed walks $(\widetilde{v}_{1}^{(1)},\ldots,\widetilde{v}_{K}^{(1)}),\ldots,(\widetilde{v}_{1}^{(n)},\ldots,\widetilde{v}_{K}^{(n)})$ in $\Gamma^{c}$ covering the entire graph and satisfying $v=\widetilde{v}_{1}^{(1)}=\cdots=\widetilde{v}_{1}^{(n)}$ and $\widetilde{v}_{K}^{(1)}=\cdots=\widetilde{v}_{K}^{(n)}$. Let $R>\max_{\mathbf{w}\in\mathcal{F}}|\mathbf{w}|$ be an integer, and consider for each $1\leq i\leq n$ the walk $(v_{1}^{(i)},\ldots,v_{L}^{(i)})$ in $\Gamma^{c}$ with $L:=2(R+K-1)$ defined by 
\[
(\underbrace{\widetilde{v}_{1}^{(1)},\widetilde{v}_{2}^{(1)},\ldots,\widetilde{v}_{1}^{(1)},\widetilde{v}_{2}^{(1)}}_{R\text{ times}},\widetilde{v}_{3}^{(1)},\ldots,\widetilde{v}_{K}^{(1)},\widetilde{v}_{1}^{(i)},\ldots,\widetilde{v}_{K}^{(i)}).
\]
These walks are all distinct, have length $L$, are closed, and cover the whole graph $\Gamma^{c}$. Denote the element in $W_{\Gamma}$ associated with $(v_{1}^{(i)},\ldots,v_{L}^{(i)})$ by $\mathbf{g}_{i}$.

Now let $a=a_{1}\cdots a_{r}\in\mathbf{A}_{\Gamma}$ be a reduced operator of type $\mathbf{w}=s_{1}\cdots s_{r}\in\mathcal{F}\subseteq W_{\Gamma}\setminus\{e\}$ with $s_i \in S_\Gamma$ and $a_{i}\in A_{s_{i}}^{\circ}$. By \cite[Proposition 2.6]{CaspersKlisseLarsen21} (see also Proposition \ref{DensityStatement}) the operator $a$ can be expressed as a linear combination of elementary operators of the form $z:=(b_{1}^{\dagger}\cdots b_{k}^{\dagger})\,\left(\mathfrak{d}(c_{1})\cdots\mathfrak{d}(c_{l})\right)\,(d_{1}^{\dagger}\cdots d_{m}^{\dagger})^{\ast}\in\mathcal{E}(\mathbf{A},\Gamma)$, where $k,l,m\in\mathbb{N}$ with $k+l+m=r$, $(x_{1},\ldots,x_{k})$, $(y_{1},\ldots,y_{m})\in\mathcal{W}_{\mathrm{red}}$, and $b_{i}\in A_{x_{i}}^{\circ}$, $d_{i}\in A_{y_{i}}^{\circ}$, $c_{i}\in A_{w_{i}}^{\circ}$, where $\{w_{1},\ldots,w_{l}\}\subseteq V\Gamma$ forms a clique.
Moreover, by the same result  one can assume that 
\begin{equation} \label{eq:elementaryop}
    (x_1 \cdots x_k)(w_1 \cdots w_l)(y_1 \cdots y_m)^{-1} = \mathbf{w}.
\end{equation}

To prove the claim, it suffices to show that $\mathbf{Q}(u_{\mathbf{g}_{i}}\Omega)\,z\,\mathbf{Q}(u_{\mathbf{g}_{j}}\Omega)=0$ for all $1\leq i,j\leq n$ and $z$ as above. Consider vectors $(u_{\mathbf{g}_{i}}\Omega)\otimes\eta\in\mathbf{Q}(u_{\mathbf{g}_{i}}\Omega)\mathcal{H}_{\Gamma}$ and $(u_{\mathbf{g}_{j}}\Omega)\otimes\eta^{\prime}\in\mathbf{Q}(u_{\mathbf{g}_{j}}\Omega)\mathcal{H}_{\Gamma}$ with $\eta\in\mathcal{H}_{\mathbf{u}}^{\circ}$, $\eta^{\prime}\in\mathcal{H}_{\mathbf{u}^{\prime}}^{\circ}$, where $\mathbf{u},\mathbf{u}^{\prime}\in W_{\Gamma}$ are group elements with $|\mathbf{g}_{i}\mathbf{u}|=|\mathbf{g}_{i}|+|\mathbf{u}|$ and $|\mathbf{g}_{j}\mathbf{u}^{\prime}|=|\mathbf{g}_{j}|+|\mathbf{u}^{\prime}|$. Since $(b_{1}^{\dagger}\cdots b_{k}^{\dagger})^{\ast}$ and $(d_{1}^{\dagger}\cdots d_{m}^{\dagger})^{\ast}$ are products of annihilation operators, and $(v_{1}^{(i)},\ldots,v_{L}^{(i)})$ and $(v_{1}^{(j)},\ldots,v_{L}^{(j)})$ are closed walks that cover $\Gamma^{c}$, $\langle(u_{\mathbf{g}_{i}}\Omega)\otimes\eta,z((u_{\mathbf{g}_{j}}\Omega)\otimes\eta^{\prime})\rangle=0$ unless 
\begin{equation} \label{eq:wordscoincide}
x_{1}\cdots x_{k}=v_{1}^{(i)}\cdots v_{k}^{(i)}, \qquad \text{and} \qquad y_{1}\cdots y_{m}=v_{1}^{(j)}\cdots v_{m}^{(j)}.
\end{equation}
In that case, by the multiplication rules of annihilation operators (see, e.g., \cite[Lemma 2.8]{Klisse25}), we get that
\begin{eqnarray}
\nonumber
& & \left\langle (u_{\mathbf{g}_{i}}\Omega)\otimes\eta,z((u_{\mathbf{g}_{j}}\Omega)\otimes\eta^{\prime})\right\rangle \\
\nonumber
& & \qquad =  \overline{\left\langle (b_{1}^{\dagger}\cdots b_{k}^{\dagger})\Omega,(u_{v_{1}^{(i)}}\cdots u_{v_{k}^{(i)}})\Omega\right\rangle }\left\langle (d_{1}^{\dagger}\cdots d_{m}^{\dagger})\Omega,(u_{v_{1}^{(j)}}\cdots u_{v_{m}^{(j)}})\Omega\right\rangle \\
& & \qquad \qquad \times \left\langle (u_{v_{k+1}^{(i)}}\cdots u_{v_{L}^{(i)}}\Omega)\otimes\eta,\,\left(\mathfrak{d}(c_{1})\cdots\mathfrak{d}(c_{l})\right)\,((u_{v_{m+1}^{(j)}}\cdots u_{v_{L}^{(j)}}\Omega)\otimes\eta^{\prime})\right\rangle .\label{eq:ExpansionIdentity}
\end{eqnarray}

\vspace{1mm}
\noindent We proceed by distinguishing three cases.\\
\begin{itemize}[leftmargin=2em]
\item \emph{Case 1:} If $l=0$, then by convention $\mathfrak{d}(c_{1})\cdots\mathfrak{d}(c_{l})\in\mathbb{C}1$.  
Since $L-k > K$ and $L-m > K$, the last term in equation \eqref{eq:ExpansionIdentity} reduces to a scalar multiple of
\begin{eqnarray*}
&& \left\langle (u_{v_{k+1}^{(i)}}\cdots u_{v_{L}^{(i)}}\Omega)\otimes\eta,\, (u_{v_{m+1}^{(j)}}\cdots u_{v_{L}^{(j)}}\Omega)\otimes\eta^{\prime} \right\rangle \\
&& \qquad = \left\langle u_{v_{k+1}^{(i)}}\cdots u_{v_{L-K+1}^{(i)}}\cdots u_{v_{L}^{(i)}}\Omega\otimes\eta,\, u_{v_{m+1}^{(j)}}\cdots u_{v_{L-K+1}^{(j)}}\cdots u_{v_{L}^{(j)}}\Omega\otimes\eta^{\prime} \right\rangle.
\end{eqnarray*}
If $i\neq j$ and $k=m$, then by
\[
(v_{L-K+1}^{(i)},\ldots,v_{L}^{(i)})=(\widetilde{v}_{1}^{(i)},\ldots,\widetilde{v}_{K}^{(i)}) \neq (\widetilde{v}_{1}^{(j)},\ldots,\widetilde{v}_{K}^{(j)})=(v_{L-K+1}^{(j)},\ldots,v_{L}^{(j)}),
\]
it follows that the inner product above, and hence the expression in \eqref{eq:ExpansionIdentity}, vanishes.

Similarly, for arbitrary $1\leq i,j\leq n$, our construction guarantees that
\[
(v_{k+1}^{(i)},\ldots,v_{(2R+K-2)+(k-m)}^{(i)})\neq(v_{m+1}^{(j)},\ldots,v_{2R+K-2}^{(j)}) \quad \text{for } k<m,
\]
and
\[
(v_{k+1}^{(i)},\ldots,v_{2R+K-2}^{(i)})\neq(v_{m+1}^{(j)},\ldots,v_{(2R+K-2)+(m-k)}^{(j)}) \quad \text{for } m<k,
\]
which again implies that the inner product, and thus \eqref{eq:ExpansionIdentity}, vanishes. Indeed, for $k\not\equiv m\text{ (mod }2)$ we have that $v_{k+1}^{(i)}\neq v_{m+1}^{(j)}$, while for $k\equiv m\text{ (mod }2)$ equality of the expressions above would contradict the assumption that $(\widetilde{v}_{1}^{(1)},\ldots,\widetilde{v}_{K}^{(1)})$ covers the entire graph.

Finally, when $i=j$, we claim that one cannot have $m=k$. Indeed, if $m=k$, then by \eqref{eq:elementaryop} and \eqref{eq:wordscoincide} it follows that $\mathbf{w}=e$, contradicting our standing assumption that $\mathbf{w}\neq e$.
\item \emph{Case 2}: If $l=1$, and either $w_{1}\neq v_{k+1}^{(i)}$ or $w_{1}\neq v_{m+1}^{(j)}$, the term in \eqref{eq:ExpansionIdentity} again vanishes. If $w_{1}=v_{k+1}^{(i)}=v_{m+1}^{(j)}$, then
\begin{eqnarray*}
& &\left\langle (u_{\mathbf{g}_{i}}\Omega)\otimes\eta,z(u_{\mathbf{g}_{j}}\Omega)\otimes\eta^{\prime}\right\rangle \\
& & \qquad = \omega_{\Gamma}(u_{w_{1}}^{\ast}c_{1}u_{w_{1}})\overline{\left\langle (b_{1}^{\dagger}\cdots b_{k}^{\dagger})\Omega,(u_{v_{1}^{(i)}}\cdots u_{v_{k}^{(i)}})\Omega\right\rangle }\left\langle (d_{1}^{\dagger}\cdots d_{m}^{\dagger})\Omega,(u_{v_{1}^{(j)}}\cdots u_{v_{m}^{(j)}})\Omega\right\rangle \\
& & \qquad \qquad \times \left\langle (u_{v_{k+2}^{(i)}}\cdots u_{v_{L}^{(i)}}\Omega)\otimes\eta,\,(u_{v_{m+2}^{(j)}}\cdots u_{v_{L}^{(j)}}\Omega)\otimes\eta^{\prime}\right\rangle,
\end{eqnarray*}
which equals zero, since $\omega_{\Gamma}(u_{w_{1}}^{\ast}c_{1}u_{w_{1}})=\omega_{w_1}(u_{w_{1}}^{\ast}c_{1}u_{w_{1}})=\omega_{\Gamma}(c_{1})=0$.
\item \emph{Case 3}: For $l\geq 2$, since $\{w_{1},\ldots,w_{l}\}\subseteq V\Gamma$ forms a clique and $(v_{m+1}^{(j)},\ldots,v_{L}^{(j)})$ is a walk covering $\Gamma^{c}$, the expression in \eqref{eq:ExpansionIdentity} vanishes. This follows from the observation that the element in $W_{\Gamma}$ associated with $(v_{m+1}^{(j)},\ldots,v_{L}^{(j)})$ can only start in the letter $v_{m+1}^{(j)}$.
\end{itemize}

This completes the proof. \end{proof}

\begin{lemma} \label{MultiplicationIdentities-1} Let $\Gamma$ be a finite, undirected, simplicial graph with $\#V\Gamma\geq3$, and let $\mathbf{A}:=(A_{v})_{v\in V\Gamma}$ be a collection of unital \Cs-algebras, each equipped with a GNS-faithful state $\omega_{v}$. Assume that the complement $\Gamma^{c}$ is connected and that every vertex $v\in V\Gamma$ admits a unitary $u_{v}\in A_{v}^{\omega_{v}}\cap\ker(\omega_{v})$. Let $(v_{1},\ldots,v_{L}),(w_{1},\ldots,w_{L})\in V\Gamma\times\ldots\times V\Gamma$ be distinct closed walks covering the entire graph $\Gamma^{c}$, with $x:=v_{1}=w_{1}$ and $y:=v_{L}=w_{L}$, and set $\mathbf{g}:=v_{1}\cdots v_{L}$, $\mathbf{h}:=w_{1}\cdots w_{L}$.

For any $z\in V\Gamma\setminus\{x,y\}$ satisfying $(x,z)\notin E\Gamma$, the following statements hold:
\begin{enumerate}[leftmargin=*,label=\textup{(\roman*)}]
\item The following relations are satisfied: 
\begin{align}
\mathbf{Q}(u_{\mathbf{g}xy}\Omega)\mathbf{Q}(u_{\mathbf{h}xy}\Omega) & =0,\label{MultiplicationIdentities(1.1)}\\
\mathbf{Q}(u_{\mathbf{g}xy}\Omega)(u_{\mathbf{h}xz}u_{x}^*u_{\mathbf{h}xy}^{\ast})\mathbf{Q}(u_{\mathbf{h}xy}\Omega)^{\perp} & =0,\label{MultiplicationIdentities(1.2)}\\
\mathbf{Q}(u_{\mathbf{g}xy}\Omega)^{\perp}(u_{\mathbf{g}xy}u_{x}u_{\mathbf{g}xz}^{\ast})\mathbf{Q}(u_{\mathbf{h}xy}\Omega) & =0,\label{MultiplicationIdentities(1.3)}\\
\mathbf{Q}(u_{\mathbf{g}xy}\Omega)^{\perp}(u_{\mathbf{g}xy}u_{x}u_{\mathbf{g}xz}^{\ast})\mathbf{Q}(u_{\mathbf{g}xy}\Omega) & =0,\label{MultiplicationIdentities(1.4)}\\
\mathbf{Q}(u_{\mathbf{g}xy}\Omega)^{\perp}(u_{\mathbf{g}xy}u_{x}u_{\mathbf{g}xz}^{\ast})(u_{\mathbf{h}xz}u_{x}^{\ast}u_{\mathbf{h}xy}^{\ast})\mathbf{Q}(u_{\mathbf{h}xy}\Omega)^{\perp} & =0.\label{MultiplicationIdentities(1.5)}
\end{align}
\item The following inequalities hold: 
\begin{align}
(u_{\mathbf{g}xz}u_{x}^{\ast}u_{\mathbf{g}xy}^{\ast})\mathbf{Q}(u_{\mathbf{g}xy}\Omega)^{\perp}(u_{\mathbf{g}xy}u_{x}u_{\mathbf{g}xz}^{\ast}) & \leq\mathbf{Q}(u_{\mathbf{g}xz}\Omega),\label{MultiplicationIdentities(2.1)}\\
(u_{\mathbf{h}xz}u_{x}^{\ast}u_{\mathbf{h}xy}^{\ast})\mathbf{Q}(u_{\mathbf{h}xy}\Omega)^{\perp}(u_{\mathbf{h}xy}u_{x}u_{\mathbf{h}xz}^{\ast}) & \leq\mathbf{Q}(u_{\mathbf{h}xz}\Omega).\label{MultiplicationIdentities(2.2)}
\end{align}
\item We have 
\begin{align}
\mathbf{Q}(u_{\mathbf{g}xy}\Omega)^{\perp}(u_{\mathbf{g}xy}u_{x}u_{\mathbf{g}xz}^{\ast})=\mathbf{Q}(u_{\mathbf{g}xy}\Omega)^{\perp}(u_{\mathbf{g}xy}u_{x}u_{\mathbf{g}xz}^{\ast})\mathbf{Q}(u_{\mathbf{g}}\Omega).\label{MultiplicationIdentities(3.1)}
\end{align}
\end{enumerate}
\end{lemma}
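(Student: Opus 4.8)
The plan is to work entirely in the Fock‑space model $\mathcal{H}_\Gamma$. The first observation I would record is that, since each $\lambda_v$ is a unital $*$-homomorphism and each $u_v$ is a unitary, every reduced operator assembled from the $u_v$'s is a \emph{unitary} on $\mathcal{H}_\Gamma$; in particular $u_{\mathbf{g}xy},u_{\mathbf{g}xz},u_{\mathbf{h}xy},u_{\mathbf{h}xz}$ and the displayed products $u_{\mathbf{g}xy}u_xu_{\mathbf{g}xz}^{\ast}$, $u_{\mathbf{h}xy}u_xu_{\mathbf{h}xz}^{\ast}$ are unitary. Next I would check that $\mathbf{g}xy,\mathbf{g}xz,\mathbf{h}xy,\mathbf{h}xz$ are reduced, indeed \emph{rigid} (no two adjacent letters commute): consecutive walk‑letters are $\Gamma^{c}$-adjacent hence non‑commuting, while $(y,x),(x,y)\notin E\Gamma$ (the walks are closed in $\Gamma^{c}$) and $(x,z)\notin E\Gamma$, $z\neq x,y$ because $z\notin\mathrm{Star}(x)$. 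Consequently each word has a unique reduced expression, and since $u_v\xi_v$ is a unit vector in $\mathcal{H}_v^{\circ}$ (as $u_v$ is unitary with $\omega_v(u_v)=0$), the vectors $u_{\mathbf{g}xy}\Omega,\dots$ are genuine elementary tensors. I would then fix the two bookkeeping rules used throughout: $u_{\mathbf{m}}$ creates the prefix $u_{\mathbf{m}}\Omega$ on a vector $\eta\in\mathcal{H}_{\mathbf{u}}^{\circ}$ whenever $\mathbf{m}\mathbf{u}$ is reduced, $u_{\mathbf{m}}^{\ast}$ deletes that prefix, and $\mathbf{Q}(b\Omega)$ is the projection onto $\overline{\mathrm{span}}\{b\Omega\otimes\eta\}$.

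\textbf{The within‑sector identities (1.4), (3.1), (2.1), (2.2).} For these I would exploit the factorisation $u_{\mathbf{g}xy}u_xu_{\mathbf{g}xz}^{\ast}=u_{\mathbf{g}x}\bigl(u_yu_xu_z^{\ast}\bigr)u_{\mathbf{g}x}^{\ast}$, which exhibits the operator as the conjugate of the \emph{local} unitary $u_yu_xu_z^{\ast}$ by $u_{\mathbf{g}x}$. I would compute the action on a spanning vector $u_{\mathbf{g}xy}\Omega\otimes\eta$ by tracking creations and annihilations one letter at a time: apply $u_{\mathbf{g}x}^{\ast}$ to strip the common prefix, then $u_z^{\ast},u_x,u_y$, then re‑create $u_{\mathbf{g}x}$. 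Because $z\notin\mathrm{Star}(x)$, the annihilator $u_z^{\ast}$ and the creators $u_x,u_y$ cannot shuffle past one another, so in the generic case the image is again a pure tensor carrying the prefix $u_{\mathbf{g}xy}\Omega$; this gives (1.4), and the same computation on $\mathbf{Q}(u_{\mathbf{g}}\Omega)^{\perp}$-vectors gives (3.1). For (2.1) I would note that the left‑hand side equals $V^{\ast}\mathbf{Q}(u_{\mathbf{g}xy}\Omega)^{\perp}V$ with $V:=u_{\mathbf{g}xy}u_xu_{\mathbf{g}xz}^{\ast}$ unitary, hence a projection, and identify its range as consisting of prefix‑$u_{\mathbf{g}xz}\Omega$ vectors, so that it is dominated by $\mathbf{Q}(u_{\mathbf{g}xz}\Omega)$; (2.2) is the verbatim $\mathbf{h}$-analogue. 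The only subtle point is the cancellation case in which the tail $\mathbf{u}$ begins with $x$, $y$, or $z$, and here I would run the same three‑case analysis (clique sizes $0,1,\ge 2$) as in Cases~1--3 of the proof of Proposition~\ref{DifferentPaths-1}, using the decomposition of reduced operators into elementary operators from \cite[Proposition~2.6]{CaspersKlisseLarsen21}.

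\textbf{The cross‑sector identities (1.1), (1.2), (1.3), (1.5).} These compare the $\mathbf{g}$- and $\mathbf{h}$-data, and the driving mechanism is that $\mathbf{g}$ and $\mathbf{h}$ are \emph{distinct rigid walks covering} $\Gamma^{c}$, so the prefix tensors $u_{\mathbf{g}xy}\Omega$ and $u_{\mathbf{h}xy}\Omega$ are transverse. For (1.1) I would evaluate $\langle u_{\mathbf{g}xy}\Omega\otimes\eta,\,u_{\mathbf{h}xy}\Omega\otimes\eta'\rangle$ inside a common summand $\mathcal{H}_{\mathbf{m}}^{\circ}$: the inner product factorises over the vertex slots of $\mathbf{m}$, and at the first slot where the two covering walks disagree one is forced to pair a factor $u_s\xi_s\in\mathcal{H}_s^{\circ}$ against an orthogonal one, precisely because rigid covering walks admit no non‑trivial shuffles that could realign them — this is the orthogonality analogue of the vanishing in Proposition~\ref{DifferentPaths-1}, and yields $\mathbf{Q}(u_{\mathbf{g}xy}\Omega)\mathbf{Q}(u_{\mathbf{h}xy}\Omega)=0$. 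For (1.2), (1.3), (1.5) I would write $V_{\mathbf{h}}=u_{\mathbf{h}x}(u_yu_xu_z^{\ast})u_{\mathbf{h}x}^{\ast}$ and observe that it only modifies prefixes inside the $\mathbf{h}x$-sector, whereas $\mathbf{Q}(u_{\mathbf{g}xy}\Omega)$ reads off the transverse $\mathbf{g}x$-sector; combining the support statements (3.1) and (2.1)--(2.2) for the $\mathbf{h}$-operator with the orthogonality (1.1) then annihilates each sandwiched product.

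\textbf{Main obstacle.} I expect the decisive difficulty to be the cross‑sector vanishing, i.e.\ turning the informal ``transversality of two covering walks'' into a rigorous statement in the weak Bruhat order of $W_\Gamma$ together with the elementary‑tensor inner product — precisely, ruling out that any reduced expression of a common group element $\mathbf{m}$ could align the $\mathbf{g}$-factors with the $\mathbf{h}$-factors. This is the step that genuinely requires both the covering of $\Gamma^{c}$ and the rigidity of the walks, in contrast to the within‑sector identities, which are local creation/annihilation bookkeeping controlled by $z\notin\mathrm{Star}(x)$. The secondary, more routine difficulty is the disciplined treatment of the cancellation cases in the step‑by‑step Fock computation, for which Proposition~\ref{DifferentPaths-1} and \cite[Proposition~2.6]{CaspersKlisseLarsen21} supply the template.
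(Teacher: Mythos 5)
Your overall architecture coincides with the paper's: everything is reduced to (i) a single support identity
\[
(u_{\mathbf{g}xz}u_{x}^{\ast}u_{\mathbf{g}xy}^{\ast})\mathbf{Q}(u_{\mathbf{g}xy}\Omega)^{\perp}=\mathbf{Q}(u_{\mathbf{g}xz}\Omega)(u_{\mathbf{g}xz}u_{x}^{\ast}u_{\mathbf{g}xy}^{\ast})\mathbf{Q}(u_{\mathbf{g}xy}\Omega)^{\perp},
\]
proved by the letter-by-letter creation/annihilation computation you describe, and (ii) mutual orthogonality of the prefix projections attached to distinct rigid covering walks; parts (1)--(3) then follow formally exactly as you indicate (unitary conjugates of projections, range containments, and $\mathbf{Q}(u_{\mathbf{g}xz}\Omega)\leq\mathbf{Q}(u_{\mathbf{g}}\Omega)$). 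Your preliminary observations (unitarity of the reduced words in the $u_{v}$, rigidity of $\mathbf{g}xy$ and $\mathbf{g}xz$, $u_{v}\xi_{v}\in\mathcal{H}_{v}^{\circ}$) are all correct and are implicitly used by the paper as well.

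There is, however, one concrete misstep at the crux. You correctly single out the cancellation case of the Fock computation as the only subtle point, but the tool you propose for it --- the three-case analysis by clique size from the proof of Proposition~\ref{DifferentPaths-1} --- is the wrong one: that analysis concerns the diagonal factor $\mathfrak{d}(c_{1})\cdots\mathfrak{d}(c_{l})$ of an elementary operator and the covering property of the walks, and no diagonal operator occurs in the present lemma. The mechanism that actually closes the argument is different, and it is the reason the compressions by $\mathbf{Q}(u_{\mathbf{g}xy}\Omega)^{\perp}$ appear in the statements at all: iterating the splitting $\eta_{1}\mapsto(u_{v_{1}}^{\ast}\eta_{1}-\langle u_{v_{1}}\xi_{v_{1}},\eta_{1}\rangle\xi_{v_{1}})+\langle u_{v_{1}}\xi_{v_{1}},\eta_{1}\rangle(\cdots)$, every partial-cancellation branch already lands in $\mathbf{Q}(u_{\mathbf{g}xz}\Omega)\mathcal{H}_{\Gamma}$, while the single dangerous branch --- full cancellation of the prefix --- carries the scalar coefficient $\langle u_{y}\xi_{y},\eta_{L+2}\rangle\langle u_{x}\xi_{x},\eta_{L+1}\rangle\prod_{i}\langle u_{v_{i}}\xi_{v_{i}},\eta_{i}\rangle$, which is exactly the component of $\eta$ along the prefix $u_{\mathbf{g}xy}\Omega$ and hence vanishes because $\eta\in\ker(\mathbf{Q}(u_{\mathbf{g}xy}\Omega))$. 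This observation cannot be bypassed: the support identity is genuinely false without the $\mathbf{Q}(u_{\mathbf{g}xy}\Omega)^{\perp}$ (e.g.\ $(u_{\mathbf{g}xz}u_{x}^{\ast}u_{\mathbf{g}xy}^{\ast})$ sends $u_{\mathbf{g}xy}\Omega\otimes u_{x}\xi_{x}\otimes u_{z}^{\ast}\xi_{z}$ to $u_{\mathbf{g}x}\Omega\notin\mathbf{Q}(u_{\mathbf{g}xz}\Omega)\mathcal{H}_{\Gamma}$), so \eqref{MultiplicationIdentities(2.1)}, \eqref{MultiplicationIdentities(2.2)} and \eqref{MultiplicationIdentities(3.1)} are not reached by your plan as written. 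The remainder --- orthogonality of $\mathbf{Q}(u_{\mathbf{g}xy}\Omega)$ and $\mathbf{Q}(u_{\mathbf{h}xy}\Omega)$ from distinctness of rigid covering walks, and the sandwiching arguments for \eqref{MultiplicationIdentities(1.1)}--\eqref{MultiplicationIdentities(1.5)} --- matches the paper.
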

\begin{proof}
Repeated applications of \cite[Lemma 2.8 (3)]{Klisse25} show that the elementary operator 
\[
(u_{v_{1}}^{\dagger}\cdots u_{v_{L}}^{\dagger}u_{x}^{\dagger}u_{y}^{\dagger})(u_{v_{1}}^{\dagger}\cdots u_{v_{L}}^{\dagger}u_{x}^{\dagger}u_{y}^{\dagger})^{\ast}\in\mathcal{E}(\mathbf{A},\Gamma)
\]
is a projection onto $\mathbf{Q}(u_{\mathbf{g}xy}\Omega)\mathcal{H}_{\Gamma}$. Therefore, it coincides with $\mathbf{Q}(u_{\mathbf{g}xy}\Omega)$ and $\Sigma(\mathbf{Q}(u_{\mathbf{g}xy}\Omega))=e$.

We claim that the following identity holds:
\begin{equation}
(u_{\mathbf{g}xz}u_{x}^{\ast}u_{\mathbf{g}xy}^{\ast})\mathbf{Q}(u_{\mathbf{g}xy}\Omega)^{\perp}=\mathbf{Q}(u_{\mathbf{g}xz}\Omega)(u_{\mathbf{g}xz}u_{x}^{\ast}u_{\mathbf{g}xy}^{\ast})\mathbf{Q}(u_{\mathbf{g}xy}\Omega)^{\perp} .\label{eq:GraphMainIdentity}
\end{equation}
Since $\mathbf{Q}(u_{\mathbf{g}xy}\Omega)$ is diagonal by \cite[Proposition 2.12]{Klisse25}, to show \eqref{eq:GraphMainIdentity} it suffices to verify that for every $\mathbf{w}\in W_{\Gamma}$ and every $\eta\in\mathcal{H}_{\mathbf{w}}^{\circ}\cap\ker(\mathbf{Q}(u_{\mathbf{g}xy}\Omega))$, the vector $(u_{\mathbf{g}xz}u_{x}^{\ast}u_{\mathbf{g}xy}^{\ast})\eta$ lies in $\mathbf{Q}(u_{\mathbf{g}xz}\Omega)\mathcal{H}_{\Gamma}$. For this, let $\mathbf{w}=s_{1}\cdots s_{n}$ be a reduced expression with $s_{i}\in S_\Gamma$, $|\mathbf{w}|=n$, and let $\eta=\eta_{1}\otimes\cdots\otimes\eta_{n}$ where $\eta_{i}\in\mathcal{H}_{s_{i}}^{\circ}$.

If $v_{1}\nleq\mathbf{w}$, then $(u_{\mathbf{g}xz}u_{x}^{\ast}u_{\mathbf{g}xy}^{\ast})\eta$ is clearly contained in $\mathbf{Q}(u_{\mathbf{g}xz}\Omega)\mathcal{H}_{\Gamma}$. So assume without loss of generality that $v_{1}\leq\mathbf{w}$ with $s_{1}=v_{1}$, so that
\begin{eqnarray*}
(u_{\mathbf{g}xz}u_{x}^{\ast}u_{\mathbf{g}xy}^{\ast})\eta &=& u_{\mathbf{g}xz}(u_{x}^{\ast}u_{y}^{\ast}u_{x}^{\ast}u_{v_{L}}^{\ast}\cdots u_{v_{2}}^{\ast}u_{v_{1}}^{\ast})(\eta_{1}\otimes\cdots\otimes\eta_{n}) \\
&=& u_{\mathbf{g}xz}(u_{x}^{\ast}u_{y}^{\ast}u_{x}^{\ast}u_{v_{L}}^{\ast}\cdots u_{v_{2}}^{\ast})\left((u_{v_{1}}^{\ast}\eta_{1}-\langle u_{v_{1}}\xi_{v_{1}},\eta_{1}\rangle\xi_{v_{1}}) \otimes \eta _2 \otimes\cdots\otimes\eta_{n}\right) \\
& & \qquad \qquad \qquad + \langle u_{v_{1}}\xi_{v_{1}},\eta_{1}\rangle u_{\mathbf{g}xz}(u_{x}^{\ast}u_{y}^{\ast}u_{x}^{\ast}u_{v_{L}}^{\ast}\cdots u_{v_{2}}^{\ast})\left(\eta_{2}\otimes\cdots\otimes\eta_{n}\right).
\end{eqnarray*}
The first summand lies in $\mathbf{Q}(u_{\mathbf{g}xz}\Omega)\mathcal{H}_{\Gamma}$ so that it remains to prove that the second summand does as well. As before, if $v_{1}v_{2}\nleq \mathbf{w}$, this is clearly the case. So we may without loss of generality assume that $v_{1}v_{2}\leq \mathbf{w}$ with $s_{2}=v_{2}$. Proceeding like this, we obtain that $(u_{\mathbf{g}xz}u_{x}^{\ast}u_{\mathbf{g}xy}^{\ast})\eta\in\mathbf{Q}(u_{\mathbf{g}xz}\Omega)\mathcal{H}_{\Gamma}$ whenever $n\leq L+2$, while for $n>L+2$ it remains to prove 
\[
\langle u_{y}\xi_{u_{y}},\eta_{L+2}\rangle\langle u_{x}\xi_{u_{x}},\eta_{L+1}\rangle\left(\prod_{i=1}^{L}\langle u_{v_{i}}\xi_{v_{i}},\eta_{i}\rangle\right)u_{\mathbf{g}xz}u_{x}^{\ast}(\eta_{L+3}\otimes\cdots\otimes\eta_{n})\in\mathbf{Q}(u_{\mathbf{g}xz}\Omega)\mathcal{H}_{\Gamma}.
\]
But this vector equals zero since $\eta\in\ker(\mathbf{Q}(u_{\mathbf{g}xy}\Omega))$, which completes the argument. This finishes the proof of \eqref{eq:GraphMainIdentity}.\\

\emph{About }(i): The identity in \eqref{MultiplicationIdentities(1.1)} follows directly from the fact that $\mathbf{g}xy$ and $\mathbf{h}xy$ correspond to distinct closed walks in $\Gamma^{c}$ covering the entire graph. Furthermore,  \eqref{eq:GraphMainIdentity} and \eqref{MultiplicationIdentities(1.1)}  yield
\[
\mathbf{Q}(u_{\mathbf{g}xy}\Omega)(u_{\mathbf{h}xz}u_{x}^{\ast}u_{\mathbf{h}xy}^{\ast})\mathbf{Q}(u_{\mathbf{h}xy}\Omega)^{\perp}=\mathbf{Q}(u_{\mathbf{g}xy}\Omega)\mathbf{Q}(u_{\mathbf{h}xz}\Omega)(u_{\mathbf{h}xy}u_{x}u_{\mathbf{h}xz}^{\ast})\mathbf{Q}(u_{\mathbf{h}xy}\Omega)^{\perp}=0,
\]
and
\[
\mathbf{Q}(u_{\mathbf{g}xy}\Omega)^{\perp}(u_{\mathbf{g}xy}u_{x}u_{\mathbf{g}xz}^{\ast})\mathbf{Q}(u_{\mathbf{h}xy}\Omega)=\mathbf{Q}(u_{\mathbf{g}xy}\Omega)^{\perp}(u_{\mathbf{g}xy}u_{x}u_{\mathbf{g}xz}^{\ast})\mathbf{Q}(u_{\mathbf{g}xz}\Omega)\mathbf{Q}(u_{\mathbf{h}xy}\Omega)=0,
\]
which establishes \eqref{MultiplicationIdentities(1.2)} and \eqref{MultiplicationIdentities(1.3)}. The identity in \eqref{MultiplicationIdentities(1.5)} follows in the same manner. Finally, \eqref{MultiplicationIdentities(1.4)} can be obtained by the same argument as for \eqref{eq:GraphMainIdentity}.\\

\emph{About }(ii): From \eqref{eq:GraphMainIdentity} we have 
\begin{eqnarray*}
& & (u_{\mathbf{g}xz}u_{x}^{\ast}u_{\mathbf{g}xy}^{\ast})\mathbf{Q}(u_{\mathbf{g}xy}\Omega)^{\perp}(u_{\mathbf{g}xy}u_{x}u_{\mathbf{g}xz}^{\ast}) \\
&=& \mathbf{Q}(u_{\mathbf{g}xz}\Omega)(u_{\mathbf{g}xz}u_{x}^{\ast}u_{\mathbf{g}xy}^{\ast})\mathbf{Q}(u_{\mathbf{g}xy}\Omega)^{\perp}(u_{\mathbf{g}xy}u_{x}u_{\mathbf{g}xz}^{\ast})\mathbf{Q}(u_{\mathbf{g}xz}\Omega) \\
&\leq& \mathbf{Q}(u_{\mathbf{g}xz}\Omega),
\end{eqnarray*}
and analogously,
\begin{eqnarray*}
& & (u_{\mathbf{h}xz}u_{x}^{\ast}u_{\mathbf{h}xy}^{\ast})\mathbf{Q}(u_{\mathbf{h}xy}\Omega)^{\perp}(u_{\mathbf{h}xy}u_{x}u_{\mathbf{h}xz}^{\ast}) \\
&=& \mathbf{Q}(u_{\mathbf{h}xz}\Omega)(u_{\mathbf{h}xz}u_{x}^{\ast}u_{\mathbf{h}xy}^{\ast})\mathbf{Q}(u_{\mathbf{h}xy}\Omega)^{\perp}(u_{\mathbf{h}xy}u_{x}u_{\mathbf{h}xz}^{\ast})\mathbf{Q}(u_{\mathbf{h}xz}\Omega) \\
&\leq& \mathbf{Q}(u_{\mathbf{h}xz}\Omega).
\end{eqnarray*}

\emph{About }(iii): The identity in \eqref{MultiplicationIdentities(3.1)} follows from \eqref{eq:GraphMainIdentity} in combination with $\mathbf{Q}(u_{\mathbf{g}xz}\Omega)\leq\mathbf{Q}(u_{\mathbf{g}}\Omega)$.\\

This completes the proof of the lemma. 
\end{proof}

We have now assembled all the ingredients necessary to establish Theorem~\ref{Selflessness1}.

\begin{proof}[{Proof of Theorem \ref{Selflessness1}}] We first consider the case where $\Gamma$ is a finite graph. For this, fix a vertex $x\in V\Gamma$ of degree at least $2$ in $\Gamma^{c}$. Let $\mathcal{F}\subseteq W_{\Gamma}\setminus\{e\}$ be a finite subset, and let $n\in\mathbb{N}_{\geq1}$. Choose distinct closed walks $(v_{1}^{(1)},\ldots,v_{L}^{(1)})$, $\ldots,(v_{1}^{(n)},\ldots,v_{L}^{(n)})$ in $\Gamma^{c}$ that cover the whole graph with $x=v_{1}^{(1)}=\cdots=v_{n}^{(1)}$ and $y:=v_{L}^{(1)}=\cdots=v_{L}^{(n)}$ as obtained in Proposition~\ref{DifferentPaths-1}. Set also $\mathbf{g}_{i}:=v_{1}^{(i)}\cdots v_{L}^{(i)}$, and choose a vertex $z\in V\Gamma\setminus\{x,y\}$ satisfying $(x,z)\notin E\Gamma$. We then introduce the operator 
\[
T_{n,\mathcal{F}}:=\frac{1}{\sqrt{2n}}\sum_{i=1}^{n}\left(\mathbf{Q}(u_{\mathbf{g}_ixy}\Omega)(u_{\mathbf{g}_ixy}u_{x}u_{\mathbf{g}_i xz}^{\ast})+(u_{\mathbf{g}_ixz}u_{x}^{\ast}u_{\mathbf{g}_ixy}^{\ast})\mathbf{Q}(u_{\mathbf{g}_ixy}\Omega)^{\perp}\right)\in\mathcal{B}(\mathcal{H}_{\Gamma}),
\]
so that $T_{n,\mathcal{F}}+T_{n,\mathcal{F}}^{\ast} =\frac{1}{\sqrt{2n}}\sum_{i=1}^{n}\left(u_{\mathbf{g}_{i}xy}u_{x}u_{\mathbf{g}_{i}xz}^{\ast}+u_{\mathbf{g}_{i}xz}u_{x}^{\ast}u_{\mathbf{g}_{i}xy}^{\ast}\right) \in\mathbf{A}_{\Gamma}$.

With Lemma~\ref{MultiplicationIdentities-1}~(i) we compute 
\begin{eqnarray*}
T_{n,\mathcal{F}}^{\ast}T_{n,\mathcal{F}} & = & \frac{1}{2n}\sum_{i,j=1}^{n}\left((u_{\mathbf{g}_{i}xz}u_{x}^{\ast}u_{\mathbf{g}_{i}xy}^{\ast})\mathbf{Q}(u_{\mathbf{g}_{i}xy}\Omega)+\mathbf{Q}(u_{\mathbf{g}_{i}xy}\Omega)^{\perp}(u_{\mathbf{g}_{i}xy} u_{x}u_{\mathbf{g}_{i}xz}^{\ast})\right)\\
 &  & \qquad\quad\times\left(\mathbf{Q}(u_{\mathbf{g}_{j}xy}\Omega)(u_{\mathbf{g}_{j}xy}u_{x}u_{\mathbf{g}_{j}xz}^{\ast})+(u_{\mathbf{g}_{j}xz}u_{x}^{\ast}u_{\mathbf{g}_{j}xy}^{\ast})\mathbf{Q}(u_{\mathbf{g}_{j}xy}\Omega)^{\perp}\right)\\
 & = & \frac{1}{2n}\sum_{i=1}^{n}\left((u_{\mathbf{g}_{i}xz}u_{x}^{\ast}u_{\mathbf{g}_{i}xy}^{\ast})\mathbf{Q}(u_{\mathbf{g}_{i}xy}\Omega)+\mathbf{Q}(u_{\mathbf{g}_{i}xy}\Omega)^{\perp}(u_{\mathbf{g}_{i}xy} u_{x}u_{\mathbf{g}_{i}xz}^{\ast})\right)\\
 &  & \qquad\quad\times\left(\mathbf{Q}(u_{\mathbf{g}_{i}xy}\Omega)(u_{\mathbf{g}_{i}xy}u_{x}u_{\mathbf{g}_{i}xz}^{\ast})+(u_{\mathbf{g}_{i}xz}u_{x}^{\ast}u_{\mathbf{g}_{i}xy}^{\ast})\mathbf{Q}(u_{\mathbf{g}_{i}xy}\Omega)^{\perp}\right)\\
 & = & \frac{1}{2n}\sum_{i=1}^{n}\left((u_{\mathbf{g}_{i}xz}u_{x}^{\ast}u_{\mathbf{g}_{i}xy}^{\ast})\mathbf{Q}(u_{\mathbf{g}_{i}xy}\Omega)(u_{\mathbf{g}_{i}xy}u_{x}u_{\mathbf{g}_{i}xz}^{\ast})+\mathbf{Q}(u_{\mathbf{g}_{i}xy}\Omega)^{\perp}\right)\\
 & = & 1-\frac{1}{2n}\sum_{i=1}^{n}\left((u_{\mathbf{g}_{i}xz}u_{x}^{\ast}u_{\mathbf{g}_{i}xy}^{\ast})\mathbf{Q}(u_{\mathbf{g}_{i}xy}\Omega)^{\perp}(u_{\mathbf{g}_{i}xy}u_{x}u_{\mathbf{g}_{i}xz}^{\ast})+\mathbf{Q}(u_{\mathbf{g}_{i}xy}\Omega)\right).
\end{eqnarray*}
By Lemma \ref{MultiplicationIdentities-1} (i), (ii), the projections $((u_{\mathbf{g}_{i}xz}u_{x}^{\ast}u_{\mathbf{g}_{i}xy}^{\ast})\mathbf{Q}(u_{\mathbf{g}_{i}xy}\Omega)^{\perp}(u_{\mathbf{g}_{i}xy}u_{x}u_{\mathbf{g}_{i}xz}^{\ast}))_{1\leq i\leq n}$ are pairwise orthogonal to each other, so that $P_{1}:=\sum_{i=1}^{n}(u_{\mathbf{g}_{i}xz}u_{x}^{\ast}u_{\mathbf{g}_{i}xy}^{\ast})\mathbf{Q}(u_{\mathbf{g}_{i}xy}\Omega)^{\perp}(u_{\mathbf{g}_{i}xy}u_{x}u_{\mathbf{g}_{i}xz}^{\ast})\in\mathcal{B}(\mathcal{H}_{\Gamma})$ is a projection as well. In the same way, $P_{2}:=\sum_{i=1}^{n}\mathbf{Q}(u_{\mathbf{g}_{i}xy}\Omega)\in\mathcal{B}(\mathcal{H}_{\Gamma})$ is a projection. It follows that 
\[
\left\Vert T_{n,\mathcal{F}}^{\ast}T_{n,\mathcal{F}}-1\right\Vert =\frac{\Vert P_{1}+P_{2}\Vert}{2n}\leq\frac{1}{n}.
\]

For every reduced operator $a\in\mathbf{A}_{\Gamma}$ of type $\mathbf{w}\in\mathcal{F}$, by Proposition~\ref{DifferentPaths-1} and Lemma~\ref{MultiplicationIdentities-1}~(iii) we have that 
\begin{eqnarray*}
 &  & T_{n,\mathcal{F}}^{\ast}aT_{n,\mathcal{F}}\\
 & = & \frac{1}{2n}\sum_{i,j=1}^{n}\left((u_{\mathbf{g}_{i}xz}u_{x}^{\ast}u_{\mathbf{g}_{i}xy}^{\ast})\mathbf{Q}(u_{\mathbf{g}_{i}xy}\Omega)+\mathbf{Q}(u_{\mathbf{g}_{i}xy}\Omega)^{\perp}(u_{\mathbf{g}_{i}xy} u_{x}u_{\mathbf{g}_{i}xz}^{\ast})\right) \\
 & & \quad \qquad \qquad \times a\left(\mathbf{Q}(u_{\mathbf{g}_{j}xy}\Omega)(u_{\mathbf{g}_{j}xy}u_{x}u_{\mathbf{g}_{j}xz}^{\ast})+(u_{\mathbf{g}_{j}xz}u_{x}^{\ast}u_{\mathbf{g}_{j}xy}^{\ast})\mathbf{Q}(u_{\mathbf{g}_{j}xy}\Omega)^{\perp}\right)\\
 & = & \frac{1}{2n}\sum_{i,j=1}^{n}\left((u_{\mathbf{g}_{i}xz}u_{x}^{\ast}u_{\mathbf{g}_{i}xy}^{\ast})\mathbf{Q}(u_{\mathbf{g}_{i}xy}\Omega)+\mathbf{Q}(u_{\mathbf{g}_{i}xy}\Omega)^{\perp}(u_{\mathbf{g}_{i}xy} u_{x}u_{\mathbf{g}_{i}xz}^{\ast})\right)\\
 &  & \quad \qquad\qquad\times(\mathbf{Q}(u_{\mathbf{g}_{i}}\Omega)a\mathbf{Q}(u_{\mathbf{g}_{j}}\Omega)) \\
 & & \quad  \qquad \qquad \qquad \times \left(\mathbf{Q}(u_{\mathbf{g}_{j}xy}\Omega)(u_{\mathbf{g}_{j}xy}u_{x}u_{\mathbf{g}_{j}xz}^{\ast})+(u_{\mathbf{g}_{j}xz}u_{x}^{\ast}u_{\mathbf{g}_{j}xy}^{\ast})\mathbf{Q}(u_{\mathbf{g}_{j}xy}\Omega)^{\perp}\right)\\
 & = & 0.
\end{eqnarray*}
Lastly, for every reduced operator $a\in\mathbf{A}_{\Gamma}$ of type $\mathbf{w}\in W_{\Gamma}$ with $|\mathbf{w}|<L$, we have $\mathbf{Q}(u_{\mathbf{g}_{i}}\Omega)(a^{\ast}\Omega)=0$ so that by Lemma~\ref{MultiplicationIdentities-1} (iii), $\omega_{\Gamma}(aT_{n,\mathcal{F}}T_{n,\mathcal{F}}^{\ast}a^*)=\Vert T_{n,\mathcal{F}}^{\ast}(a^*\Omega)\Vert_{2}^{2}=0$. Since the finite subset $\mathcal{F}\subseteq W_{\Gamma}\setminus\{e\}$ and the integer $n\in\mathbb{N}_{\geq1}$ were arbitrary, it follows that, for a suitable cofinal ultrafilter $\mathcal{U}$, the operators constructed above give rise to an isometry $T\in\mathcal{B}(\mathcal{H}_{\Gamma})^{\mathcal{U}}$ with $T+T^\ast \in \mathbf{A}_\Gamma^\mathcal{U}$, $T^{\ast}aT=\omega_{\Gamma}(a)$ and $\omega_\Gamma^{\mathcal U}(aTT^*a^*)=0$ for all $a\in\mathbf{A}_{\Gamma}$. By \cite[Theorem~13]{Ozawa25}, we conclude that $(\mathbf{A}_\Gamma,\omega_\Gamma)$ is completely selfless.\\

Now assume that $\Gamma$ is an infinite graph, and again fix a vertex $x\in V\Gamma$ of degree at least $2$ in $\Gamma^{c}$. For any finite subgraph $\Gamma_{0}\subseteq\Gamma$ obtained as the full subgraph of a finite subset of $V\Gamma$, we write $\mathbf{A}_{\Gamma_{0}}:=\star_{v,\Gamma_{0}}(A_{v},\omega_{v})$ and denote the corresponding graph product Hilbert space by $\mathcal{H}_{\Gamma_{0}}$. Since $\Gamma^c$ is connected, we can always replace $\Gamma_0$ with a larger finite graph whose complement graph is connected by adding a finite number of vertices to $V\Gamma_0$ and taking the corresponding full subgraph. For this reason, we may assume that $\Gamma_0$ has connected complement without loss of generality. By the first part of the proof, for every finite subgraph $\Gamma_{0}\subseteq\Gamma$ arising as the full subgraph of a finite subset of $V\Gamma$ with $x\in V\Gamma_0$, and with connected complement graph, every finite subset $\mathcal{F}\subseteq W_{\Gamma_{0}}\setminus\{e\}$, and every $n\in\mathbb{N}_{\geq1}$, there exist a natural number $L(\Gamma_{0},n,\mathcal{F})\in\mathbb{N}$ and an operator $T_{\Gamma_{0},n,\mathcal{F}}\in\mathcal{B}(\mathcal{H}_{\Gamma_{0}})\subseteq\mathcal{B}(\mathcal{H}_{\Gamma})$ such that $T_{\Gamma_{0},n,\mathcal{F}}^{\ast}aT_{\Gamma_{0},n,\mathcal{F}}=0$ for every reduced operator $a\in\mathbf{A}_{\Gamma_{0}}\subseteq\mathbf{A}_{\Gamma}$ of type $\mathbf{v}\in\mathcal{F}$, $\Vert T_{\Gamma_{0},n,\mathcal{F}}^{\ast}T_{\Gamma_{0},n,\mathcal{F}}-1\Vert\leq n^{-1}$, and $\omega_{\Gamma}(bT_{\Gamma_{0},n,\mathcal{F}}T_{\Gamma_{0},n,\mathcal{F}}^{\ast}b^{\ast})=0$ for every reduced operator $b\in\mathbf{A}_{\Gamma_{0}}\subseteq\mathbf{A}_{\Gamma}$ of type $\mathbf{w}\in W_{\Gamma_{0}}$ with $|\mathbf{w}|<L(\Gamma_{0},n,\mathcal{F})$. Without loss of generality, we can assume that $L$ is monotone in the sense that $L(\Gamma_{0}^{\prime},n^{\prime},\mathcal{F}^{\prime})\leq L(\Gamma_{0},n,\mathcal{F})$ for every full subgraph $\Gamma_{0}^{\prime}$ of $\Gamma$ with connected complement such that $V\Gamma_{0}' \subseteq V\Gamma_{0}$, $x\in V\Gamma_{0}^{\prime}$, every $n^{\prime}\leq n$, and every subset $\mathcal{F}^{\prime}\subseteq\mathcal{F}$. Since the union 
\[
\bigcup_{\substack{\Gamma_{0}\subseteq\Gamma\text{ finite with }x\in V\Gamma_{0}\\ \Gamma_0^c \text{ is connected}}}\text{Span}\left\{ a\in\mathbf{A}_{\Gamma_{0}}\mid a\text{ reduced of type }\mathbf{w}\in W_{\Gamma_{0}}\right\} 
\]
is dense in $\mathbf{A}_{\Gamma}$ (compare with the proof of \cite[Corollary 2.17]{CaspersFima17}).
It follows that, for a suitable cofinal ultrafilter $\mathcal{U}$, the operators constructed above give rise to an isometry $T\in\mathcal{B}(\mathcal{H}_{\Gamma})^{\mathcal{U}}$ with $T+T^{\ast}\in A_{\Gamma}^{\mathcal{U}}$, $T^{\ast}aT=\omega_{\Gamma}(a)$, and $\omega_{\Gamma}^{\mathcal{U}}(aTT^{*}a^{*})=0$ for all $a\in\mathbf{A}_{\Gamma}$. As before, we conclude from \cite[Theorem~13]{Ozawa25} that $(\mathbf{A}_{\Gamma},\omega_{\Gamma})$ is completely selfless.
 \end{proof}


\subsection{The case \texorpdfstring{$\#V\Gamma=2$}-}\label{subsec:selflessness-free}

For finite, undirected, simplicial graphs $\Gamma$ with $\#V\Gamma = 2$, the approach developed in the previous subsection requires adjustments, since an analogue of Lemma~\ref{PathAbundance} fails in this case. Indeed, Theorem~\ref{Selflessness1} cannot hold for graphs with only two vertices, as $\Cs_{r}(\mathbb{Z}_{2} \star \mathbb{Z}_{2})
\cong (\Cs_{r}(\mathbb{Z}_{2}), \tau) \star (\Cs_{r}(\mathbb{Z}_{2}), \tau)$ is not selfless, where $\tau$ denotes the canonical tracial state on $\Cs_{r}(\mathbb{Z}_{2})$.  
Nevertheless, in the present subsection we show that closely related ideas can be employed to handle reduced free products of \Cs-algebras satisfying \emph{Avitzour’s condition}. This substantially improves Theorem B of \cite{HayesElayavalliRobert25} by eliminating the rapid decay assumption.

\begin{theorem}[{Theorem \ref{MainTheorem2}}] \label{Selflessness2}
Let $(A, \omega_{A})$ and $(B, \omega_{B})$ be unital \Cs-algebras equipped with GNS-faithful states $\omega_{A}$ and $\omega_{B}$, respectively. Assume that there exist unitaries 
\[
u \in \ker(\omega_A) \cap A^{\omega_{A}} \subseteq A
\quad \text{and} \quad
v_{1}, v_{2} \in \ker(\omega_B)  \cap B^{\omega_{B}} \subseteq B
\]
such that $\omega_{B}(v_{1}^{\ast}v_{2}) = 0$. Then the free product \Cs-algebra $(A, \omega_{A}) \star (B, \omega_{B})$ is completely selfless.
\end{theorem}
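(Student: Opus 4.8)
The plan is to follow the blueprint of the proof of Theorem~\ref{Selflessness1}, viewing $(A,\omega_A)\star(B,\omega_B)$ as the graph product over the edgeless graph $\Gamma$ on two vertices $\{s,t\}$ with $A_s=A$ and $A_t=B$, and to build an approximately isometric family $T_{n,\mathcal{F}}$ on the free product Fock space $\mathcal{H}$. The role of the triple $(x,y,z)$ from Lemma~\ref{MultiplicationIdentities-1} is played by $x:=u$ (in the $A$-slot) and $y:=v_1$, $z:=v_2$ (two elements in the $B$-slot). The hypotheses $u\in\ker(\omega_A)\cap A^{\omega_A}$ and $v_1,v_2\in\ker(\omega_B)\cap B^{\omega_B}$ furnish precisely the kernel and centralizer identities used throughout Subsection~\ref{subsec:selflessness-graphs}, while the Avitzour orthogonality $\omega_B(v_1^\ast v_2)=0$ —equivalently $\langle v_a\xi_B,v_b\xi_B\rangle=\delta_{ab}$— substitutes for the automatic orthogonality $\mathcal{H}_y^\circ\perp\mathcal{H}_z^\circ$ of distinct vertices that was available for $\#V\Gamma\geq 3$.

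The crucial and only genuinely new step is to replace the distinct covering walks of Proposition~\ref{DifferentPaths-1} by an \emph{algebraic} branching. Given finite $\mathcal{F}\subseteq W_\Gamma\setminus\{e\}$, put $R:=\max_{\mathbf{w}\in\mathcal{F}}|\mathbf{w}|$, choose $m$ with $2^m\geq n$, and for pairwise distinct patterns $\epsilon^{(1)},\dots,\epsilon^{(n)}\in\{1,2\}^m$ set
\[
g_i:=(uv_1)^{R}\,u\,v_{\epsilon_1^{(i)}}\,u\,v_{\epsilon_2^{(i)}}\cdots u\,v_{\epsilon_m^{(i)}}\in A\star B,
\]
a reduced word ending in the $B$-slot whose distinguishing letters all occur beyond position $2R$. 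Because left multiplication by a reduced $a$ of type $\mathbf{w}\in\mathcal{F}$ alters only the first $|\mathbf{w}|\leq R$ letters of a word, while the $g_i$ share their first $2R$ letters and differ afterwards, the same computation as in Proposition~\ref{DifferentPaths-1} yields $\mathbf{Q}(g_i\Omega)\,a\,\mathbf{Q}(g_j\Omega)=0$ for all $i,j$: expanding $a$ into elementary operators via \cite[Proposition~2.6]{CaspersKlisseLarsen21}, the diagonal-free terms vanish either by a length mismatch of the tails or, when $i\neq j$, by an $\omega_B(v_1^\ast v_2)=0$ mismatch at the first differing branching letter; the single-diagonal terms vanish via the centralizer identity $\omega(u_w^\ast c\,u_w)=\omega(c)=0$; and no multi-diagonal terms occur since $\Gamma$ has no clique of size $\geq 2$.

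With $p_i:=g_iuv_1$ and $q_i:=g_iuv_2$, I then define, in exact analogy with the formula in the proof of Theorem~\ref{Selflessness1},
\[
T_{n,\mathcal{F}}:=\frac{1}{\sqrt{2n}}\sum_{i=1}^{n}\Bigl(\mathbf{Q}(p_i\Omega)\,p_i\,u\,q_i^\ast+q_i\,u^\ast\,p_i^\ast\,\mathbf{Q}(p_i\Omega)^\perp\Bigr),
\]
so that $T_{n,\mathcal{F}}+T_{n,\mathcal{F}}^\ast=\frac{1}{\sqrt{2n}}\sum_{i}(p_iuq_i^\ast+q_iu^\ast p_i^\ast)\in A\star B$. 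The verbatim analogue of Lemma~\ref{MultiplicationIdentities-1}, with $(x,y,z)$ replaced by $(u,v_1,v_2)$ and $\mathcal{H}_y^\circ\perp\mathcal{H}_z^\circ$ replaced by $\omega_B(v_1^\ast v_2)=0$, yields the key relation $(q_iu^\ast p_i^\ast)\mathbf{Q}(p_i\Omega)^\perp=\mathbf{Q}(q_i\Omega)(q_iu^\ast p_i^\ast)\mathbf{Q}(p_i\Omega)^\perp$ together with its companions; from these the identical telescoping computation gives $\|T_{n,\mathcal{F}}^\ast T_{n,\mathcal{F}}-1\|\leq 1/n$ (the two families of summands being pairwise orthogonal projections) and, using the branching identity of the previous paragraph, $T_{n,\mathcal{F}}^\ast\,a\,T_{n,\mathcal{F}}=0$ for every reduced $a$ of type in $\mathcal{F}$.

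Passing to an ultrapower assembles these into an isometry $T\in\mathcal{B}(\mathcal{H})^{\mathcal{U}}$ with $T^\ast a T=\omega(a)1$ for all $a\in A\star B$, after which the endgame copies the closing lines of the proof of Theorem~\ref{Selflessness1}: Ozawa's universal property \cite[Lemma~12]{Ozawa25} produces a $\ast$-homomorphism $((A,\omega_A)\star(B,\omega_B),\omega)\star(\mathcal{T},\omega)\to(\mathcal{B}(\mathcal{H})^{\mathcal{U}},\omega^{\mathcal{U}})$; simplicity of this free product with the Toeplitz algebra, which again follows from the Avitzour condition (cf.\ \cite{Kumjian04} and \cite[Exercise~4.8.1]{BrownOzawa08}), forces injectivity; restricting to $C^\ast(T+T^\ast)$ gives the desired embedding, and \cite[Theorem~2.6]{Robert12} concludes selflessness. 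I expect the single real obstacle to be the branching step of the second paragraph—showing that the \emph{one} relation $\omega_B(v_1^\ast v_2)=0$, rather than a genuine multiplicity of vertices, separates the $g_i$ well enough that $\mathbf{Q}(g_i\Omega)\,a\,\mathbf{Q}(g_j\Omega)$ vanishes for every reduced $a$ of bounded type, including the diagonal case $i=j$, where the entire effect must be produced by the length of the common prefix together with the vanishing conditions $\omega_A(u)=\omega_B(v_k)=0$.
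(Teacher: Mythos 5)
Your overall architecture coincides with the paper's: the paper also reduces to a three\--letter alphabet (it formalizes this via an auxiliary graph $\widetilde\Gamma$ on vertices $\{r,s,t\}$ with $u_s=u$, $u_r=v_1^\ast$, $u_t=v_2^\ast$), builds $n$ long reduced words $u_{\mathbf g_i}$ sharing a prefix, forms the same two\--term operator $T_{n,\mathcal F}$, proves the same multiplication identities, and closes with Ozawa's lemma, simplicity of the free product with the Toeplitz algebra, and \cite[Theorem~2.6]{Robert12}. However, the step you yourself flag as the ``single real obstacle'' does in fact fail as you have set it up. Take $\epsilon^{(i)}=(1,1,\dots,1)$ (which your prescription of ``pairwise distinct patterns'' permits for one index), so that $g_i=(uv_1)^{R+m}$, and take $a=(uv_1)^{m_0/2}$, a legitimate reduced operator of even type $\mathbf w=(1,2,\dots,1,2)\in\mathcal F$ with $m_0=|\mathbf w|\le R$. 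Then $ag_i=(uv_1)^{R+m+m_0/2}$ is reduced, $ag_i\Omega$ lies in the range of $\mathbf Q(g_i\Omega)$ (it is $g_i\Omega$ tensored with an admissible tail), and $g_i\Omega$ does too, so $\mathbf Q(g_i\Omega)\,a\,\mathbf Q(g_i\Omega)\neq0$. The point is that your common prefix $(uv_1)^R$ has \emph{constant} $B$\--pattern $1^R$, which is invariant under the shift by $m_0/2$ induced by left multiplication by an even\--length $a$; neither a ``length mismatch of the tails'' nor the conditions $\omega_A(u)=\omega_B(v_k)=0$ rescue the diagonal case $i=j$, and the same problem recurs for any suffix pattern that is compatible with a shift of the prefix. (Your treatment of the odd\--length case via the centralizer identity is fine, and matches the paper's Case~2 of Proposition~\ref{FreeDifferentPaths}.)

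The fix, which is exactly what the paper's Proposition~\ref{FreeDifferentPaths} implements, is to choose the common prefix itself to be shift\--detecting: the paper uses walks beginning with $(sr)^K(st)^K$, i.e.\ words beginning with $(uv_1^\ast)^K(uv_2^\ast)^K$ where $K>\max_{\mathbf w\in\mathcal F}|\mathbf w|$, followed by the distinguishing suffix $(sr)^i(st)^{n-i}$. Any shift by $0<m_0/2\le K$ then forces a position where a $v_1^\ast$\--slot is compared with a $v_2^\ast$\--slot \emph{inside the prefix}, and the Avitzour orthogonality $\omega_B(v_1^\ast v_2)=0$ (together with $v_1\in B^{\omega_B}$, which gives $\langle v_1^\ast\xi_B,v_2^\ast\xi_B\rangle=\omega_B(v_1v_2^\ast)=0$) kills the matrix coefficient for all pairs $i,j$, including $i=j$. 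With your $g_i$ replaced by such words the rest of your argument goes through essentially verbatim; as written, though, the key vanishing $\mathbf Q(g_i\Omega)\,a\,\mathbf Q(g_j\Omega)=0$ is false and the proof is incomplete.
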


As before, the proof of Theorem~\ref{Selflessness2} requires some preliminary results.

For notational convenience, for the rest of this section, we will denote by $\Gamma$ the finite, undirected, simplicial graph with vertex set $V\Gamma := \{1,2\}$ and edge set $E\Gamma := \emptyset$. We set $A_{1} := A$, $A_{2} := B$ and $\omega_{1} := \omega_{A}$, $\omega_{2} := \omega_{B}$. With this notation, $(A, \omega_{A}) \star (B, \omega_{B}) = \mathbf{A}_{\Gamma}$, and the associated right-angled Coxeter group $W_{\Gamma} \cong \mathbb{Z}_{2} \star \mathbb{Z}_{2}$ is the infinite dihedral group. As well, $\widetilde{\Gamma}$ will always refer to the finite, undirected, simplicial graph defined by $V\widetilde{\Gamma} := \{r, s, t\}$ and $E\widetilde{\Gamma} := \{(r, t), (t, r)\}$, so that $W_{\widetilde{\Gamma}} \cong \mathbb{Z}_{2} \star (\mathbb{Z}_{2} \times \mathbb{Z}_{2})$.
Finally, we also fix the notation for the following unitaries inside $\mathbf{A}_\Gamma$,
\[
u_r:=v_1^\ast, \qquad u_s:=u, \qquad u_t:=v_2^\ast,
\]
where $u,v_1$ and $v_2$ are as in the statement of Theorem~\ref{Selflessness2}.

We will often use without mention that, if one identifies $s\in V\widetilde{\Gamma}$ with $1\in V\Gamma$, and $r,t\in V\widetilde{\Gamma}$ with $2\in V\Gamma$, then any reduced word in $W_{\widetilde{\Gamma}}$ is also reduced when viewed in $ W_{\Gamma}$.

As in Subsection \ref{subsec:selflessness-graphs}, every reduced operator $a = a_{1} \cdots a_{n} \in \mathbf{A}_{\Gamma}$ of type $\mathbf{w} = s_{1} \cdots s_{n} \in W_{\Gamma}$ with $s_i \in S_\Gamma$, $a_{i} \in A_{w_{i}}^{\circ}$ determines an orthogonal projection $\mathbf{Q}(a\Omega)$ onto the direct sum of all subspaces of $\mathcal{H}_{\Gamma}$ of the form
\[
\mathbb{C}(a\Omega) \otimes \mathcal{H}_{\mathbf{j}}^{\circ}
:= 
\left(\mathbb{C}(a_{1}\xi_{s_{1}}) \otimes \cdots \otimes \mathbb{C}(a_{n}\xi_{s_{n}})\right) 
\otimes 
\mathcal{H}_{\mathbf{j}}^{\circ},
\]
where $\mathbf{j}$ is either empty or a finite alternating sequence in $\{1,2\}$, beginning in $\{1,2\} \setminus \{s_{n}\}$.

Analogously to the previous construction, consider a closed walk $(w_{1}, \ldots, w_{L}) \in V\widetilde{\Gamma} \times \cdots \times V\widetilde{\Gamma}$ in $\widetilde{\Gamma}^{c}$ with $w_{1} = s$ and $w_{L} = t$ that covers the entire graph.  
Define
\[
u_{\mathbf{g}} := u_{w_{1}} \cdots u_{w_{L}}, 
\qquad 
u_{\mathbf{g}st} := u_{w_{1}} \cdots u_{w_{L}} u_{s} u_{t}, 
\qquad 
u_{\mathbf{g}sr} := u_{w_{1}} \cdots u_{w_{L}} u_{s} u_{r},
\]
where $\mathbf{g} := w_{1} \cdots w_{L}$, and note that these are all reduced operators in $\mathbf{A}_\Gamma$.

\begin{proposition} \label{FreeDifferentPaths}
For every finite subset $\mathcal{F} \subseteq W_{\Gamma} \setminus \{e\}$ and every natural number $n \in \mathbb{N}$, there exist $n$ distinct closed walks
\[
(w_{1}^{(1)}, \ldots, w_{L}^{(1)}), \ldots, (w_{1}^{(n)}, \ldots, w_{L}^{(n)}) \in V\widetilde{\Gamma }\times \ldots \times V\widetilde{\Gamma}
\]
in $\widetilde{\Gamma}^{c}$ with
\[
s = w_{1}^{(1)} = \cdots = w_{1}^{(n)} \quad \text{and} \quad  t = w_{L}^{(1)} = \cdots = w_{L}^{(n)},
\]
which cover the whole graph, and satisfy $\mathbf{Q}(u_{\mathbf{g}_{i}}\Omega)\, a\, \mathbf{Q}(u_{\mathbf{g}_{j}}\Omega) = 0$ for all $1 \leq i,j \leq n$ and all reduced operators $a \in \mathbf{A}_{\Gamma}$ of type $\mathbf{w} \in \mathcal{F}$, where $\mathbf{g}_{i} := w_{1}^{(i)} \cdots w_{L}^{(i)}$ and $\mathbf{g}_{j} := w_{1}^{(j)} \cdots w_{L}^{(j)}$.
\end{proposition}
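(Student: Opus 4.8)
The plan is to transport the proof of Proposition~\ref{DifferentPaths-1} to the two-vertex situation by carrying out the walk construction on the auxiliary graph $\widetilde{\Gamma}$. The only obstruction in the free-product case is that $\Gamma^{c}$ is a single edge, hence has fewer than three vertices, so Lemma~\ref{PathAbundance} is unavailable. Passing to $\widetilde{\Gamma}$ repairs this: its complement $\widetilde{\Gamma}^{c}$ is the path $r - s - t$, which is connected with three vertices and satisfies $(s,t)\in E\widetilde{\Gamma}^{c}$, while $(r,t)\notin E\widetilde{\Gamma}^{c}$. Consequently every walk in $\widetilde{\Gamma}^{c}$ alternates between $s$ and the pair $\{r,t\}$. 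Under the identification $u_{r}=v_{1}^{\ast}$, $u_{s}=u$, $u_{t}=v_{2}^{\ast}$, and because $u_{s}\in A$ while $u_{r},u_{t}\in B$, such a walk becomes a reduced word $u_{\mathbf{g}}=u_{w_{1}}\cdots u_{w_{L}}$ in $\mathbf{A}_{\Gamma}=(A,\omega_{A})\star(B,\omega_{B})$ that alternates between $A$ and $B$. The key point is that \emph{distinct} walks yield \emph{orthogonal} Fock vectors: each factor $u_{w}\xi$ is a unit vector, and whenever two walks disagree at a position (necessarily a $\{r,t\}$-position) the corresponding factors are $v_{1}^{\ast}\xi_{B}$ and $v_{2}^{\ast}\xi_{B}$, which are orthogonal since $\langle v_{1}^{\ast}\xi_{B},v_{2}^{\ast}\xi_{B}\rangle=\omega_{B}(v_{1}v_{2}^{\ast})=\omega_{B}(v_{2}^{\ast}v_{1})=\overline{\omega_{B}(v_{1}^{\ast}v_{2})}=0$, the middle equality using $v_{1}\in B^{\omega_{B}}$ and the last using the Avitzour hypothesis.

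First I would run the walk construction exactly as in Proposition~\ref{DifferentPaths-1}, now applied to $\widetilde{\Gamma}^{c}$ with $x=s$ and $y=t$. Lemma~\ref{PathAbundance} supplies arbitrarily many distinct closed walks from $s$ to $t$ covering $\{r,s,t\}$; padding them with a common initial block of length exceeding $\max_{\mathbf{w}\in\mathcal{F}}|\mathbf{w}|$ produces $n$ distinct walks $(w_{1}^{(i)},\ldots,w_{L}^{(i)})$ of a common length $L$, with common endpoints $w_{1}^{(i)}=s$, $w_{L}^{(i)}=t$, a common long prefix, and pairwise differing suffixes. In particular each $u_{\mathbf{g}_{i}}$ is reduced of the \emph{same} type $\bar{\mathbf{g}}\in W_{\Gamma}$ of length $L$, and deleting any prefix of bounded length leaves the distinguishing suffixes untouched.

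With the walks fixed, I would expand an arbitrary reduced $a\in\mathbf{A}_{\Gamma}$ of type $\mathbf{w}\in\mathcal{F}$ into elementary operators $z=(b_{1}^{\dagger}\cdots b_{k}^{\dagger})(\mathfrak{d}(c_{1})\cdots\mathfrak{d}(c_{l}))(d_{1}^{\dagger}\cdots d_{m}^{\dagger})^{\ast}$ via \cite[Proposition~2.6]{CaspersKlisseLarsen21} and show $\mathbf{Q}(u_{\mathbf{g}_{i}}\Omega)\,z\,\mathbf{Q}(u_{\mathbf{g}_{j}}\Omega)=0$ for all $i,j$. A decisive simplification over Proposition~\ref{DifferentPaths-1} occurs here: since $E\Gamma=\emptyset$, the only cliques in $\Gamma$ are singletons, so $l\in\{0,1\}$ and the delicate $l\ge 2$ case is vacuous. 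Computing the coefficient $\langle u_{\mathbf{g}_{i}}\Omega\otimes\eta,\,z(u_{\mathbf{g}_{j}}\Omega\otimes\eta')\rangle$ as in \eqref{eq:ExpansionIdentity}, the annihilation and creation factors force the creation word of $z$ to match the length-$k$ prefix of walk $i$ and its annihilation word to match the length-$m$ prefix of walk $j$, and the residual pairing compares the corresponding tails. For $l=1$ the centralizer hypothesis yields $\omega_{\Gamma}(u_{w}^{\ast}c_{1}u_{w})=\omega(c_{1})=0$, exactly as in Case~2 of Proposition~\ref{DifferentPaths-1}. For $l=0$, when $i\neq j$ the surviving tails still carry the differing suffixes and hence produce orthogonal Fock vectors; when $i=j$, the signature identity $\Sigma(z)=\mathbf{w}\neq e$ forces $k\neq m$, after which a shift argument on the alternating, graph-covering tail again exhibits an orthogonal factor.

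I expect the vanishing of the $l=0$ coefficients to be the main obstacle. The subtlety is that $r$ and $t$ collapse to the same $B$-vertex of $W_{\Gamma}$, so all the $u_{\mathbf{g}_{i}}$ share the single type $\bar{\mathbf{g}}$ and can only be separated through the Avitzour orthogonality $v_{1}^{\ast}\xi_{B}\perp v_{2}^{\ast}\xi_{B}$; the real work is keeping track of this distinction between the $\widetilde{\Gamma}$-level walks and the $\Gamma$-level reduced words while carrying out the tail-matching bookkeeping inherited from \eqref{eq:ExpansionIdentity}. Since $\mathcal{F}$ and $n$ are arbitrary, the resulting walks then furnish precisely the data needed to drive the selflessness argument of Theorem~\ref{Selflessness2}, in direct analogy with the role of Proposition~\ref{DifferentPaths-1} in the proof of Theorem~\ref{Selflessness1}.
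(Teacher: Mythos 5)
Your outline is built from the same ingredients as the paper's proof --- walks in $\widetilde{\Gamma}^{c}$ with a long common prefix and Avitzour-orthogonal suffixes, the elementary decomposition of $a$, the centralizer identity $\omega(u_{w}^{\ast}c\,u_{w})=\omega(c)=0$ for the diagonal term, and the observation that distinct walks give orthogonal Fock vectors via $v_{1}^{\ast}\xi_{B}\perp v_{2}^{\ast}\xi_{B}$ --- but you organize the vanishing argument differently. The paper does not invoke Lemma~\ref{PathAbundance} at all here: it writes the walks down explicitly as $(s,r)^{K}(s,t)^{K}(s,r)^{i}(s,t)^{n-i}$ with $K>\max(n,\max_{\mathbf{w}\in\mathcal{F}}|\mathbf{w}|)$, and then exploits the two-vertex structure of $\Gamma$ to avoid most of your term-by-term analysis: whenever $a$ (resp.\ $a^{\ast}$) has type ending in the $B$-vertex, $a\,u_{\mathbf{g}_{j}}$ (resp.\ $a^{\ast}u_{\mathbf{g}_{i}}$) is already reduced, so $a\,\mathbf{Q}(u_{\mathbf{g}_{j}}\Omega)=\mathbf{Q}(au_{\mathbf{g}_{j}}\Omega)\,a\,\mathbf{Q}(u_{\mathbf{g}_{j}}\Omega)$ and the projections kill each other. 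This disposes of everything except $m$ odd with $s_{1}=s_{m}=1$, where only the single middle term $(a_{1}^{\dagger}\cdots a_{k_{0}}^{\dagger})\mathfrak{d}(a_{k_{0}+1})((a_{m}^{\ast})^{\dagger}\cdots)^{\ast}$ survives and the centralizer computation finishes the job. Your route, by contrast, must kill every $l=0$ elementary summand individually, which is where the real work would be.

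Two points in your $l=0$ analysis deserve flagging. First, the assertion that $\Sigma(z)=\mathbf{w}\neq e$ forces the creation and annihilation lengths to differ is false in isolation (take $\mathbf{w}$ of even length and split it in half); it only becomes true after the prefix-matching conditions are imposed, since both prefixes then lie in the common initial segment of the walks and equal lengths would give $\Sigma(z)=e$. You do state the matching first, so this is repairable, but it should be said in that order. Second, and more substantively, the ``shift argument'' you invoke for unequal tail lengths genuinely depends on the walks being non-periodic in a controlled way: if the tail of a walk were periodic of period $|k-m|$, the two shifted tails could pair nontrivially with $\eta,\eta'$ absorbing the length discrepancy. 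The paper's explicit two-block prefix $(s,r)^{K}(s,t)^{K}$ is engineered precisely so that any shift by at most $\max_{\mathbf{w}\in\mathcal{F}}|\mathbf{w}|$ forces an $r$-versus-$t$ collision across the block boundary; a generic output of Lemma~\ref{PathAbundance} padded with a single periodic block (as in Proposition~\ref{DifferentPaths-1}) does not obviously have this property. So your strategy is sound, but to complete it you would end up re-deriving essentially the explicit walks of~\eqref{eq:WalkDefinition} rather than borrowing the construction from the three-vertex case wholesale.
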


\begin{proof}
Choose $K \in \mathbb{N}$ such that $K > n$ and $K > \max_{\mathbf{w} \in \mathcal{F}} |\mathbf{w}|$. 
For each $1 \leq i \leq n$, consider the walk $(w_{1}^{(i)}, \ldots, w_{L}^{(i)})$ in $\widetilde{\Gamma}^{c}$ with $L := 2(2K + n)$ defined by
\begin{equation}
\bigl(
\underbrace{s, r, \ldots, s, r}_{K \text{ times}},
\underbrace{s, t, \ldots, s, t}_{K \text{ times}},
\underbrace{s, r, \ldots, s, r}_{i \text{ times}},
\underbrace{s, t, \ldots, s, t}_{n - i \text{ times}}
\bigr),
\label{eq:WalkDefinition}
\end{equation}
and denote the corresponding group element in $W_{\widetilde{\Gamma}}$ by $\mathbf{g}_{i} := w_{1}^{(i)} \cdots w_{L}^{(i)}$. Each of these walks has length $L$, is closed, and covers the whole graph.

Now let $a = a_{1} \cdots a_{m} \in \mathbf{A}_{\Gamma}$ be a reduced operator of type 
$\mathbf{w} = s_{1} \cdots s_{m} \in \mathcal{F} \subseteq W_{\Gamma} \setminus \{e\}$ with $s_i\in S_\Gamma$, $a_{i} \in A_{s_{i}}^{\circ}$. 
Then $a$ can be written as a linear combination of elementary operators (see, for instance, \cite[Proposition 2.6]{CaspersKlisseLarsen21}):
\begin{align}
\nonumber
a 
&= \sum_{k=0}^{m} 
  \bigl( (Q_{s_{1}} a_{1} Q_{s_{1}}^{\perp}) \cdots (Q_{s_{k}} a_{k} Q_{s_{k}}^{\perp}) \bigr)
  \bigl( (Q_{s_{k+1}}^{\perp} a_{k+1} Q_{s_{k+1}}) \cdots (Q_{s_{m}}^{\perp} a_{m} Q_{s_{m}}) \bigr) \\
\nonumber
&\quad
 + \sum_{k=0}^{m-1}
  \bigl( (Q_{s_{1}} a_{1} Q_{s_{1}}^{\perp}) \cdots (Q_{s_{k}} a_{k} Q_{s_{k}}^{\perp}) \bigr)
  (Q_{s_{k+1}} a_{k+1} Q_{s_{k+1}})
  \bigl( (Q_{s_{k+2}}^{\perp} a_{k+2} Q_{s_{k+2}}) \cdots (Q_{s_{m}}^{\perp} a_{m} Q_{s_{m}}) \bigr) \\
&= 
  \sum_{k=0}^{m} (a_{1}^{\dagger} \cdots a_{k}^{\dagger}) 
  \bigl( (a_{m}^{\ast})^{\dagger} \cdots (a_{k+1}^{\ast})^{\dagger} \bigr)^{\ast}
  + 
  \sum_{k=0}^{m-1} (a_{1}^{\dagger} \cdots a_{k}^{\dagger}) 
  \mathfrak{d}(a_{k+1}) 
  \bigl( (a_{m}^{\ast})^{\dagger} \cdots (a_{k+2}^{\ast})^{\dagger} \bigr)^{\ast},
\label{ElementaryDecomposition}
\end{align}
where every summand corresponding to an undefined index is assumed to be the identity.\\

We distinguish two cases.\\

\begin{itemize}[leftmargin=2em]
\item \emph{Case 1:} Suppose $m$ is even. 
Then either $s_{1}=1$ and $s_{m}=2$, or $s_{1}=2$ and $s_{m}=1$. 
In the first situation, the element $a u_{\mathbf{g}_{j}} \in \mathbf{A}_{\Gamma}$ is a reduced operator, and $a(\mathbf{Q}(u_{\mathbf{g}_{j}}\Omega)\mathcal{H}_{\Gamma})
  \subseteq \mathbf{Q}(a u_{\mathbf{g}_{j}}\Omega)\mathcal{H}_{\Gamma}$, so that $a\,\mathbf{Q}(u_{\mathbf{g}_{j}}\Omega)
  = \mathbf{Q}(a u_{\mathbf{g}_{j}}\Omega)\, a\, \mathbf{Q}(u_{\mathbf{g}_{j}}\Omega)$. Consequently,
\[
\mathbf{Q}(u_{\mathbf{g}_{i}}\Omega)\, a\, \mathbf{Q}(u_{\mathbf{g}_{j}}\Omega)
 = 
 \bigl(\mathbf{Q}(u_{\mathbf{g}_{i}}\Omega)\, \mathbf{Q}(a u_{\mathbf{g}_{j}}\Omega)\bigr)
 \, a \, \mathbf{Q}(u_{\mathbf{g}_{j}}\Omega)
 .
\]
To conclude the desired assertion, it is sufficient to show that $ \mathbf{Q}(u_{\mathbf{g}_{i}}\Omega)\, \mathbf{Q}(a u_{\mathbf{g}_{j}}\Omega)=0$. For this, fix vectors $(u_{\mathbf{g}_i}\Omega )\otimes\eta \in \mathbf{Q}(u_{\mathbf{g}_i}\Omega)\mathcal H_\Gamma$ and $(au_{\mathbf{g}_i}\Omega )\otimes\eta' \in \mathbf{Q}(au_{\mathbf{g}_i}\Omega)\mathcal H_\Gamma$ with $\eta \in \mathcal{H}_{\mathbf{f}}^{\circ}$, $\eta' \in \mathcal{H}_{\mathbf{f}'}^{\circ}$, where $\mathbf{f}$ and $\mathbf{f}'$ are either empty or finite alternating sequences in $\{1,2\}$ starting in $1$.
Observe that since $au_{\mathbf{g}_j}$ is reduced and $m<K$, we have that 
\begin{eqnarray*}
& & \left\langle (u_{w_1^{(i)}} \cdots u_{w_L^{(i)}}\Omega)\otimes\eta \, , \, (a_1 \dots a_m u_{w_1^{(j)}} \cdots u_{w_L^{(j)}}\Omega) \otimes\eta' \right\rangle \\
 & & \qquad = \left( \prod_{t=1}^{m}\langle u_{w_t^{(i)}}\Omega,a_t\Omega\rangle \right) \langle u_{w_{m+1}^{(i)}}\cdots u_{w_{L}^{(i)}}\Omega \otimes \eta,u_{w_1^{(j)}}\cdots u_{w_L^{(j)}}\Omega \otimes \eta^{\prime}\rangle \\
 &  & \qquad \qquad = \left( \prod_{t=1}^{m}\langle u_{w_t^{(i)}}\Omega,a_t\Omega\rangle \right) \left( \prod_{t=1}^{2K-m}\langle u_{w_{m+t}^{(i)}}\Omega,u_{w_t^{(j)}}\Omega\rangle \right) \\
 & & \qquad \qquad \qquad \qquad  \times \langle u_{w_{2K+1}^{(i)}}\cdots u_{w_{L}^{(i)}}\Omega\otimes\eta,u_{w_{2K-m+1}^{(j)}}\cdots u_{w_L^{(j)}}\Omega\otimes\eta^{\prime}\rangle.
\end{eqnarray*}
Furthermore, since $m$ is even, $w_{2K+2}=t$ and $w_{2K-m+2}=r$. Thus, using  the Avitzour condition and the assumption that $v_{1}$ and $v_{2}$ are contained in the centralizer of $\omega_{B}$,
\[
\langle u_{w_{2K+2}^{(i)}}\Omega,u_{w_{2K-m+2}^{(j)}}\Omega\rangle=\left\langle u_t\Omega,u_r\Omega\right\rangle=\omega_{\Gamma}(u_t^*u_r)=\omega_B(v_1^\ast v_2)=0.
\]

In the situation where $s_{1}=2$ and $s_{m}=1$, the equality $\mathbf{Q}(u_{\mathbf{g}_{i}}\Omega)\, a\, \mathbf{Q}(u_{\mathbf{g}_{j}}\Omega) = 0$ follows in the same way by taking adjoints and exchanging $i$ and $j$ in the argument above.

\item \emph{Case 2:} Suppose $m$ is odd. 
The same reasoning as above shows that 
$\mathbf{Q}(u_{\mathbf{g}_{i}}\Omega)\, a\, \mathbf{Q}(u_{\mathbf{g}_{j}}\Omega) = 0$
if $s_{1} = s_{m} = 2$. 
Hence, assume $s_{1} = s_{m} = 1$ and set $k_{0} := \frac{m - 1}{2}$. 
By the construction of the walks in~\eqref{eq:WalkDefinition} and the decomposition~\eqref{ElementaryDecomposition},
\begin{eqnarray}
 &  & \mathbf{Q}(u_{\mathbf{g}_{i}}\Omega)a\mathbf{Q}(u_{\mathbf{g}_{j}}\Omega)\nonumber \\
 & = & \mathbf{Q}(u_{\mathbf{g}_{i}}\Omega)\left(a_{1}^{\dagger}\cdots a_{k_{0}}^{\dagger}\right)\mathfrak{d}(a_{k_{0}+1})\left((a_{m}^{\ast})^{\dagger}\cdots(a_{k_{0}+2}^{\ast})^{\dagger}\right)^{\ast}\mathbf{Q}(u_{\mathbf{g}_{j}}\Omega)\label{eq:MultiplicationIdentity}
\end{eqnarray}
for all $1 \leq i,j \leq n$. Indeed, since $m$ is odd, every summand involving no non-trivial diagonal operator vanishes because it acts by shifting by an odd number, thus causing a mismatch in the alternating blocks of the walks. By a similar argument and by employing Avitzour's condition, every summand involving a diagonal operator with $k\neq k_{0}$ vanishes.

Now take vectors 
$(u_{\mathbf{g}_{i}}\Omega) \otimes \eta \in \mathbf{Q}(u_{\mathbf{g}_{i}}\Omega)\mathcal{H}_{\Gamma}$ and 
$(u_{\mathbf{g}_{j}}\Omega) \otimes \eta' \in \mathbf{Q}(u_{\mathbf{g}_{j}}\Omega)\mathcal{H}_{\Gamma}$ 
with $\eta \in \mathcal{H}_{\mathbf{f}}^{\circ}$, $\eta' \in \mathcal{H}_{\mathbf{f}'}^{\circ}$, where $\mathbf{f}$ and $\mathbf{f}'$ are either empty or finite alternating sequences in $\{1,2\}$ starting in $1$. 
A direct computation yields
\begin{eqnarray*}
& & \left\langle 
  (u_{\mathbf{g}_{i}}\Omega \otimes \eta),
  (a_{1}^{\dagger} \cdots a_{k_{0}}^{\dagger})
  \mathfrak{d}(a_{k_{0}+1})
  \bigl((a_{m}^{\ast})^{\dagger} \cdots (a_{k_{0}+2}^{\ast})^{\dagger}\bigr)^{\ast}
  (u_{\mathbf{g}_{j}}\Omega \otimes \eta')
\right\rangle \\
& & \qquad  = 
\omega_{\Gamma}(u_{w_{k_{0}+1}^{(i)}}^{\ast} a_{k_{0}+1} u_{w_{m - k_{0}}^{(j)}})\,
\overline{\omega_{\Gamma}(a_{k_{0}}^{\ast} \cdots a_{1}^{\ast} u_{w_{1}^{(i)}} \cdots u_{w_{k_{0}}^{(i)}})}\,
\omega_{\Gamma}(a_{k_{0}+2} \cdots a_{m} u_{w_{1}^{(j)}} \cdots u_{w_{m - k_{0} - 1}^{(j)}})\\
& & \quad \qquad \qquad  \times 
\left\langle 
 (u_{w_{k_{0}+2}^{(i)}} \cdots u_{w_{L}^{(i)}}\Omega) \otimes \eta,\,
 (u_{w_{m - k_{0} + 1}^{(j)}} \cdots u_{w_{L}^{(j)}}\Omega) \otimes \eta'
\right\rangle.
\end{eqnarray*}
By the construction of the walks, $w_{k_{0}+1}^{(i)} = w_{m - k_{0}}^{(j)}$, and hence
\[
\omega_{\Gamma}(u_{w_{k_{0}+1}^{(i)}}^{\ast} a_{k_{0}+1} u_{w_{m - k_{0}}^{(j)}})
 = \omega_{\Gamma}(a_{k_{0}+1}) = 0.
\]
Therefore, $\mathbf{Q}(u_{\mathbf{g}_{i}}\Omega)\, a\, \mathbf{Q}(u_{\mathbf{g}_{j}}\Omega) = 0$.
\end{itemize}
This completes the proof.
\end{proof}

The proof of the following lemma follows the same line of reasoning as that of Lemma~\ref{MultiplicationIdentities-1}.

\begin{lemma}\label{FreeMultiplicationIdentities}
Let $(w_{1}, \ldots, w_{L}), \ (w'_{1}, \ldots, w'_{L}) \in V\widetilde{\Gamma} \times \cdots \times V\widetilde{\Gamma}$ be distinct closed walks covering the whole graph $\widetilde{\Gamma}^{c}$, with $w_{1} = w'_{1} = s$, $w_{L} = w'_{L} = t$, and set $\mathbf{g} := w_{1} \cdots w_{L}$ and $\mathbf{h} := w'_{1} \cdots w'_{L}$. Then the following assertions hold:
\begin{enumerate}[leftmargin=*,label=\textup{(\roman*)}]

\item The following relations are satisfied:
\begin{align}
\mathbf{Q}(u_{\mathbf{g}st}\Omega)\,\mathbf{Q}(u_{\mathbf{h}st}\Omega) &= 0, \label{FreeMultiplicationIdentities(1.1)}\\
\mathbf{Q}(u_{\mathbf{g}st}\Omega)\,(u_{\mathbf{h}sr}u_{\mathbf{h}st}^{*})\,\mathbf{Q}(u_{\mathbf{h}st}\Omega)^{\perp} &= 0, \label{FreeMultiplicationIdentities(1.2)}\\
\mathbf{Q}(u_{\mathbf{g}st}\Omega)^{\perp}\,(u_{\mathbf{g}st}u_{\mathbf{g}sr}^{*})\,\mathbf{Q}(u_{\mathbf{h}st}\Omega) &= 0, \label{FreeMultiplicationIdentities(1.3)}\\
\mathbf{Q}(u_{\mathbf{g}st}\Omega)^{\perp}\,(u_{\mathbf{g}st}u_{\mathbf{g}sr}^{*})\,\mathbf{Q}(u_{\mathbf{g}st}\Omega) &= 0, \label{FreeMultiplicationIdentities(1.4)}\\
\mathbf{Q}(u_{\mathbf{g}st}\Omega)^{\perp}\,(u_{\mathbf{g}st}u_{\mathbf{g}sr}^{*})\,(u_{\mathbf{h}sr}u_{\mathbf{h}st}^{*})\,\mathbf{Q}(u_{\mathbf{h}st}\Omega)^{\perp} &= 0. \label{FreeMultiplicationIdentities(1.5)}
\end{align}

\item The following inequalities hold:
\begin{align}
(u_{\mathbf{g}sr}u_{\mathbf{g}st}^{*})\,\mathbf{Q}(u_{\mathbf{g}st}\Omega)^{\perp}\,(u_{\mathbf{g}st}u_{\mathbf{g}sr}^{*})
  &\le \mathbf{Q}(u_{\mathbf{g}s}\Omega), \label{FreeMultiplicationIdentities(2.1)}\\
(u_{\mathbf{h}sr}u_{\mathbf{h}st}^{*})\,\mathbf{Q}(u_{\mathbf{h}st}\Omega)^{\perp}\,(u_{\mathbf{h}st}u_{\mathbf{h}sr}^{*})
  &\le \mathbf{Q}(u_{\mathbf{h}s}\Omega). \label{FreeMultiplicationIdentities(2.2)}
\end{align}

\item We have
\begin{equation}
\mathbf{Q}(u_{\mathbf{g}st}\Omega)^{\perp}\,(u_{\mathbf{g}st}u_{\mathbf{g}sr}^{*})
  = \mathbf{Q}(u_{\mathbf{g}st}\Omega)^{\perp}\,(u_{\mathbf{g}st}u_{\mathbf{g}sr}^{*})\,\mathbf{Q}(u_{\mathbf{g}}\Omega).
\label{FreeMultiplicationIdentities(3.1)}
\end{equation}

\end{enumerate}
\end{lemma}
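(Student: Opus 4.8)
The plan is to transport the proof of Lemma~\ref{MultiplicationIdentities-1} essentially verbatim to the free setting, using the three‑vertex auxiliary graph $\widetilde{\Gamma}$ in place of $\Gamma$ and the orthogonality provided by Avitzour's condition in place of the automatic orthogonality of legs attached to distinct vertices. First I would record, exactly as in the opening of the proof of Lemma~\ref{MultiplicationIdentities-1}, that repeated application of \cite[Lemma~2.8~(3)]{Klisse25} realizes $\mathbf{Q}(u_{\mathbf{g}st}\Omega)$ as the elementary projection $(u_{w_1}^{\dagger}\cdots u_{w_L}^{\dagger}u_s^{\dagger}u_t^{\dagger})(u_{w_1}^{\dagger}\cdots u_{w_L}^{\dagger}u_s^{\dagger}u_t^{\dagger})^{\ast}$, so that $\mathbf{Q}(u_{\mathbf{g}st}\Omega)$ is diagonal by \cite[Proposition~2.12]{Klisse25} and has trivial signature; the same holds with $\mathbf{g}$ replaced by $\mathbf{h}$, or with the terminal letter $t$ replaced by $r$.

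The heart of the matter is the free analogue of the intertwining identity \eqref{eq:GraphMainIdentity}, namely
\[
(u_{\mathbf{g}sr}u_{\mathbf{g}st}^{\ast})\,\mathbf{Q}(u_{\mathbf{g}st}\Omega)^{\perp}=\mathbf{Q}(u_{\mathbf{g}s}\Omega)\,(u_{\mathbf{g}sr}u_{\mathbf{g}st}^{\ast})\,\mathbf{Q}(u_{\mathbf{g}st}\Omega)^{\perp}.
\]
Since $\mathbf{Q}(u_{\mathbf{g}st}\Omega)$ is diagonal, it suffices to verify, for a reduced $\eta=\eta_1\otimes\cdots\otimes\eta_n\in\mathcal{H}_{\mathbf{w}}^{\circ}\cap\ker\mathbf{Q}(u_{\mathbf{g}st}\Omega)$, that $(u_{\mathbf{g}sr}u_{\mathbf{g}st}^{\ast})\eta$ lies in $\mathbf{Q}(u_{\mathbf{g}s}\Omega)\mathcal{H}_{\Gamma}$. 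Here I would run the same induction as in Lemma~\ref{MultiplicationIdentities-1}: peel off the prefix $\mathbf{g}s$ leg by leg with $u_{\mathbf{g}s}^{\ast}$, splitting at each step a summand that already belongs to $\mathbf{Q}(u_{\mathbf{g}s}\Omega)\mathcal{H}_{\Gamma}$, and finally recreate the prefix with $u_{\mathbf{g}s}$; for $n\le L+2$ this lands directly in the target, while for $n>L+2$ the remaining tail carries the coefficient testing the $\mathbf{Q}(u_{\mathbf{g}st}\Omega)$‑component of $\eta$, which vanishes. The single genuinely new point is the action of $u_t^{\ast}$ on the $(L+2)$‑nd leg: since $r$ and $t$ both correspond to the factor $B$, one cannot invoke distinctness of vertices as in the graph case. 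Instead one uses that $\eta\in\ker\mathbf{Q}(u_{\mathbf{g}st}\Omega)$ forces the exposed leg to be orthogonal to $u_t\xi=v_2^{\ast}\xi$, together with $\langle u_r\xi,u_t\xi\rangle=\omega_B(v_1 v_2^{\ast})=\overline{\omega_B(v_1^{\ast}v_2)}=0$ (using that $\omega_B$ is tracial on its centralizer to move $v_1,v_2$ cyclically). This orthogonality annihilates the ``diagonal'' branch of $u_t^{\ast}$, preventing a full cancellation of $\mathbf{g}st$, and thereby plays precisely the role that $\beta\perp u_t\xi$ plays in the graph argument. Identity \eqref{FreeMultiplicationIdentities(1.4)} is then obtained by the very same peeling computation.

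Granting the central identity, the remaining assertions follow formally, mirroring the bookkeeping in Lemma~\ref{MultiplicationIdentities-1}. Relation \eqref{FreeMultiplicationIdentities(1.1)} is immediate from the fact that $\mathbf{g}st$ and $\mathbf{h}st$ come from distinct closed walks in $\widetilde{\Gamma}^{c}$, combined with $u_r\xi\perp u_t\xi$, which forces orthogonality of the ranges of $\mathbf{Q}(u_{\mathbf{g}st}\Omega)$ and $\mathbf{Q}(u_{\mathbf{h}st}\Omega)$ even when the two words induce the same graded component. Inserting the appropriate projection and invoking the central identity together with its adjoint yields \eqref{FreeMultiplicationIdentities(1.2)}, \eqref{FreeMultiplicationIdentities(1.3)} and \eqref{FreeMultiplicationIdentities(1.5)} exactly as in the derivation of \eqref{MultiplicationIdentities(1.2)}--\eqref{MultiplicationIdentities(1.5)}. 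For part~(2), conjugating $\mathbf{Q}(u_{\mathbf{g}st}\Omega)^{\perp}$ by $u_{\mathbf{g}sr}u_{\mathbf{g}st}^{\ast}$ and applying the central identity on both factors rewrites the left‑hand side of \eqref{FreeMultiplicationIdentities(2.1)} as $\mathbf{Q}(u_{\mathbf{g}s}\Omega)(\cdots)\mathbf{Q}(u_{\mathbf{g}s}\Omega)\le\mathbf{Q}(u_{\mathbf{g}s}\Omega)$, and symmetrically for \eqref{FreeMultiplicationIdentities(2.2)}; finally \eqref{FreeMultiplicationIdentities(3.1)} is the central identity combined with the monotonicity $\mathbf{Q}(u_{\mathbf{g}s}\Omega)\le\mathbf{Q}(u_{\mathbf{g}}\Omega)$. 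I expect the only real work, and the main obstacle, to be the leg‑by‑leg induction establishing the central identity, precisely because $r$ and $t$ no longer index distinct tensor factors: the burden is to check that Avitzour's orthogonality $\omega_B(v_1^{\ast}v_2)=0$, and the tracial behaviour of $\omega_B$ on $B^{\omega_B}$, supply exactly the cancellations that distinctness of vertices furnished for free in Lemma~\ref{MultiplicationIdentities-1}.
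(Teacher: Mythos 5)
Your proposal reproduces the paper's architecture faithfully: the same central intertwining identity $(u_{\mathbf{g}sr}u_{\mathbf{g}st}^{\ast})\mathbf{Q}(u_{\mathbf{g}st}\Omega)^{\perp}=\mathbf{Q}(u_{\mathbf{g}s}\Omega)(u_{\mathbf{g}sr}u_{\mathbf{g}st}^{\ast})\mathbf{Q}(u_{\mathbf{g}st}\Omega)^{\perp}$, the same reduction via diagonality of $\mathbf{Q}(u_{\mathbf{g}st}\Omega)$, the same leg-by-leg peeling, and the same formal deduction of (1)--(3); your observation that \eqref{FreeMultiplicationIdentities(1.1)} genuinely needs $u_r\xi\perp u_t\xi$ (via the centralizer property) rather than mere distinctness of the walks is correct and is in fact stated more carefully than in the paper.

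However, at exactly the point you single out as ``the only real work,'' the two orthogonalities you invoke do not close the argument. In the tail of the induction the surviving term is $\bigl(\prod_{i=1}^{L}\langle u_{w_i}\xi,\eta_i\rangle\bigr)\langle u_s\xi,\eta_{L+1}\rangle\, u_{\mathbf{g}sr}u_t^{\ast}(\eta_{L+2}\otimes\cdots)$, and $u_t^{\ast}$ splits $\eta_{L+2}$ into a diagonal branch (whose coefficient completes the $\mathbf{Q}(u_{\mathbf{g}st}\Omega)$-component of $\eta$ and hence vanishes, as you say) and the off-diagonal branch $(u_t^{\ast}\eta_{L+2})^{\circ}\otimes\cdots$. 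On the latter, the next operator to act is $u_r$, which lies in the \emph{same} vertex algebra $B$ and therefore acts on, rather than creates over, this leg; its diagonal sub-branch carries the coefficient $\langle u_tu_r^{\ast}\xi,\eta_{L+2}\rangle=\langle v_2^{\ast}v_1\xi,\eta_{L+2}\rangle$, which is controlled neither by $\eta_{L+2}\perp v_2^{\ast}\xi$ nor by $\omega_B(v_1^{\ast}v_2)=0$, and the resulting vector $u_{\mathbf{g}s}(\eta_{L+3}\otimes\cdots)$ begins with $u_s$ acting on the leg $\eta_{L+3}\in\mathcal{H}_1^{\circ}$, so it need not lie in $\mathbf{Q}(u_{\mathbf{g}s}\Omega)\mathcal{H}_{\Gamma}$. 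Concretely, take $u=v_1=z$ and $v_2=z^2$ in $(C(\mathbb{T}),\textstyle\int)$ (all hypotheses of Theorem \ref{Selflessness2} hold, and $u_ru_t^{\ast}=v_1^{\ast}v_2=z$) and $\eta:=u_{\mathbf{g}sr}\Omega\otimes(z\xi)$. Then $\eta\in\ker\mathbf{Q}(u_{\mathbf{g}st}\Omega)$ because $\langle u_t\xi,u_r\xi\rangle=0$, yet a direct computation gives $(u_{\mathbf{g}sr}u_{\mathbf{g}st}^{\ast})\eta=u_{\mathbf{g}}\Omega\otimes(z^{2}\xi)$, a unit vector orthogonal to the range of $\mathbf{Q}(u_{\mathbf{g}s}\Omega)$. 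So the central identity, and with it \eqref{FreeMultiplicationIdentities(2.1)}, cannot be established by the mechanism you describe; you would need either an additional hypothesis controlling quantities such as $\omega_B\bigl((v_1^{2})^{\ast}v_2\bigr)$, or a weakening of the right-hand projections (and a corresponding re-examination of how the lemma feeds into the proof of Theorem \ref{Selflessness2}).
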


\begin{proof} We claim that the following identity holds:
\begin{equation}\label{eq:FreeMainIdentity}
 (u_{\mathbf{g}sr}u_{\mathbf{g}st}^{*})\,\mathbf{Q}(u_{\mathbf{g}st}\Omega)^{\perp}
 = \mathbf{Q}(u_{\mathbf{g}s}\Omega)\,(u_{\mathbf{g}sr}u_{\mathbf{g}st}^{*})\,\mathbf{Q}(u_{\mathbf{g}st}\Omega)^{\perp}.
\end{equation}
As in the proof of Lemma \ref{MultiplicationIdentities-1} one obtains that $\mathbf{Q}(u_{\mathbf{g}st}\Omega)$ is a diagonal operator. It therefore suffices to verify that for every $\mathbf{w}\in W_{\Gamma}$ and every $\eta \in \mathcal{H}_{\mathbf{w}}^{\circ} \cap \ker(\mathbf{Q}(u_{\mathbf{g}st}\Omega))$,
the vector $(u_{\mathbf{g}sr}u_{\mathbf{g}st}^{*})\eta$ lies in $\mathbf{Q}(u_{\mathbf{g}s}\Omega)\mathcal{H}_{\Gamma}$. For this, let $\mathbf{w} = s_{1}\cdots s_{n}$ be a reduced expression with $s_{i}\in V\Gamma$, $|\mathbf{w}|=n$, and let $\eta = \eta_{1}\otimes\cdots\otimes\eta_{n}$ where $\eta_{i}\in\mathcal{H}_{s_{i}}^{\circ}$.

We distinguish two cases.

\begin{itemize}[leftmargin=2em]

\item \emph{Case 1:} If $s_{1}=2$, we obtain by employing the Avitzour condition in the second step, that
\begin{eqnarray*}
 (u_{\mathbf{g}sr}u_{\mathbf{g}st}^{*})\,\eta
 &=& u_{\mathbf{g}sr}\left((u_{t}^{\ast}\xi_{2})\otimes(u_{s}^{\ast}\xi_{1})\otimes(u_{w_{L}}^{\ast}\xi_{2})\otimes\cdots\otimes(u_{w_{1}}^{\ast}\xi_{1})\otimes\eta\right) \\
 &=& u_{\mathbf{g}s}\left((u_{r}u_{t}^{\ast}\xi_{2})\otimes(u_{s}^{\ast}\xi_{1})\otimes(u_{w_{L}}^{\ast}\xi_{2})\otimes\cdots\otimes(u_{w_{1}}^{\ast}\xi_{1})\otimes\eta\right),
\end{eqnarray*}
which shows that $(u_{\mathbf{g}sr}u_{\mathbf{g}st}^{*})\,\eta\in\mathbf{Q}(u_{\mathbf{g}s}\Omega)\mathcal{H}_{\Gamma}$.

\item \emph{Case 2:} If $s_{1}=1$, then
\begin{eqnarray*}
 (u_{\mathbf{g}sr}u_{\mathbf{g}st}^{*})\,\eta
 &=& u_{\mathbf{g}sr}\left(u_{t}^{\ast}u_{s}^{\ast}u_{w_{L}}^{\ast}\cdots u_{w_{1}}^{\ast}\right)(\eta_{1}\otimes\cdots\otimes\eta_{n}) \\
 &=& u_{\mathbf{g}sr}\left(u_{t}^{\ast}u_{s}^{\ast}u_{w_{L}}^{\ast}\cdots u_{w_{2}}^{\ast}\right)
 \bigl((u_{w_{1}}^{\ast}\eta_{1}-\langle u_{w_{1}}\xi_{1},\eta_{1}\rangle\xi_{1})
 \otimes\eta_{2}\otimes\cdots\otimes\eta_{n}\bigr) \\
 &&\qquad \qquad \qquad  +\,\langle u_{w_{1}}\xi_{1},\eta_{1}\rangle \, u_{\mathbf{g}sr}\left(u_{t}^{\ast}u_{s}^{\ast}u_{w_{L}}^{\ast}\cdots u_{w_{2}}^{\ast}\right)(\eta_{2}\otimes\cdots\otimes\eta_{n}).
\end{eqnarray*}
The first summand lies in $\mathbf{Q}(u_{\mathbf{g}s}\Omega)\mathcal{H}_{\Gamma}$ by the argument of Case~1, so it remains to prove that the second summand is also contained in $\mathbf{Q}(u_{\mathbf{g}s}\Omega)\mathcal{H}_{\Gamma}$. Iterating this step implies that $(u_{\mathbf{g}sr}u_{\mathbf{g}st}^{*})\,\eta\in\mathbf{Q}(u_{\mathbf{g}s}\Omega)\mathcal{H}_{\Gamma}$ whenever $n\leq L+1$, while for $n>L+1$ it remains to prove
\[
\langle u_s \xi_1 ,\eta_{L+1} \rangle \left(\prod_{i=1}^{L}\langle u_{w_{i}}\xi_{1},\eta_{1}\rangle\right)u_{\mathbf{g}sr}u_t^\ast(\eta_{L+2}\otimes\cdots\otimes\eta_{n}) \in\mathbf{Q}(u_{\mathbf{g}s}\Omega)\mathcal{H}_{\Gamma}.
\]
But this vector equals zero since $\eta \in \ker(\mathbf{Q}(u_{\mathbf{g}st}\Omega))$, which completes the argument.
\end{itemize}

This finishes the proof of \eqref{eq:FreeMainIdentity}.\\

The proof of the identites \eqref{FreeMultiplicationIdentities(1.1)}--\eqref{FreeMultiplicationIdentities(1.5)}, \eqref{FreeMultiplicationIdentities(2.1)}--\eqref{FreeMultiplicationIdentities(2.2)}, and \eqref{FreeMultiplicationIdentities(3.1)} now follow exactly as in the proof of Lemma \ref{MultiplicationIdentities-1}. \end{proof}

We now proceed to prove Theorem~\ref{Selflessness2}.

\begin{proof}[Proof of Theorem~\ref{Selflessness2}]
Let $\mathcal{F} \subseteq W_{\Gamma} \setminus \{e\}$ be a finite set, and let $n \in \mathbb{N}$ be a positive integer.  
By Proposition~\ref{FreeDifferentPaths}, there exist distinct closed walks $(w_{1}^{(1)}, \ldots, w_{L}^{(1)}), \ldots, (w_{1}^{(n)}, \ldots, w_{L}^{(n)})$ in $\widetilde{\Gamma}^{c}$ covering the whole graph, such that $s = w_{1}^{(1)} = \cdots = w_{1}^{(n)}$, $t = w_{L}^{(1)} = \cdots = w_{L}^{(n)}$, and satisfying $\mathbf{Q}(u_{\mathbf{g}_{i}}\Omega)\, a \, \mathbf{Q}(u_{\mathbf{g}_{j}}\Omega) = 0$ for all $1 \le i,j \le n$ and every reduced operator $a \in \mathbf{A}_{\Gamma}$ of type $\mathbf{w} \in \mathcal{F}$, where $\mathbf{g}_{i} := w_{1}^{(i)} \cdots w_{L}^{(i)}$ and $\mathbf{g}_{j} := w_{1}^{(j)} \cdots w_{L}^{(j)}$.

Define
\[
T_{n,\mathcal{F}}
:= \frac{1}{\sqrt{2n}} 
\sum_{i=1}^{n} 
\left(
  \mathbf{Q}(u_{\mathbf{g}_{i}st}\Omega)\,(u_{\mathbf{g}_{i}st}u_{\mathbf{g}_{i}sr}^{*})
  + (u_{\mathbf{g}_{i}sr}u_{\mathbf{g}_{i}st}^{*})\,\mathbf{Q}(u_{\mathbf{g}_{i}st}\Omega)^{\perp}
\right)
\in \mathcal{B}(\mathcal{H}_{\Gamma}),
\]
and note that $T_{n,\mathcal{F}} + T_{n,\mathcal{F}}^{*}
= \frac{1}{\sqrt{2n}} 
\sum_{i=1}^{n}
\left(u_{\mathbf{g}_{i}st}u_{\mathbf{g}_{i}sr}^{*} + u_{\mathbf{g}_{i}sr}u_{\mathbf{g}_{i}st}^{*}\right)
\in \mathbf{A}_{\Gamma}$.

By Lemma~\ref{FreeMultiplicationIdentities}\,(1), we have
\begin{eqnarray*}
T_{n,\mathcal{F}}^{\ast}T_{n,\mathcal{F}} & = & \frac{1}{2n}\sum_{i,j=1}^{n}\left((u_{\mathbf{g}_{i}sr}u_{\mathbf{g}_{i}st}^{\ast})\mathbf{Q}(u_{\mathbf{g}_{i}st}\Omega)+\mathbf{Q}(u_{\mathbf{g}_{i}st}\Omega)^{\perp}(u_{\mathbf{g}_{i}st}u_{\mathbf{g}_{i}sr}^{\ast})\right) \\
& & \qquad \qquad \times \left(\mathbf{Q}(u_{\mathbf{g}_{j}st}\Omega)(u_{\mathbf{g}_{j}st}u_{\mathbf{g}_{j}sr}^{\ast})+(u_{\mathbf{g}_{j}sr}u_{\mathbf{g}_{j}st}^{\ast})\mathbf{Q}(u_{\mathbf{g}_{j}st}\Omega)^{\perp}\right)\\
 & = & \frac{1}{2n}\sum_{i=1}^{n}\left((u_{\mathbf{g}_{i}sr}u_{\mathbf{g}_{i}st}^{\ast})\mathbf{Q}(u_{\mathbf{g}_{i}st}\Omega)+\mathbf{Q}(u_{\mathbf{g}_{i}st}\Omega)^{\perp}(u_{\mathbf{g}_{i}st}u_{\mathbf{g}_{i}sr}^{\ast})\right) \\
 & & \qquad \qquad \times \left(\mathbf{Q}(u_{\mathbf{g}_{i}st}\Omega)(u_{\mathbf{g}_{i}st}u_{\mathbf{g}_{i}sr}^{\ast})+(u_{\mathbf{g}_{i}sr}u_{\mathbf{g}_{i}st}^{\ast})\mathbf{Q}(u_{\mathbf{g}_{i}st}\Omega)^{\perp}\right)\\
 & = & \frac{1}{2n}\sum_{i=1}^{n}\left((u_{\mathbf{g}_{i}sr}u_{\mathbf{g}_{i}st}^{\ast})\mathbf{Q}(u_{\mathbf{g}_{i}st}\Omega)(u_{\mathbf{g}_{i}st}u_{\mathbf{g}_{i}sr}^{\ast})+\mathbf{Q}(u_{\mathbf{g}_{i}st}\Omega)^{\perp}\right)\\
 & = & 1-\frac{1}{2n}\sum_{i=1}^{n}\left((u_{\mathbf{g}_{i}sr}u_{\mathbf{g}_{i}st}^{\ast})\mathbf{Q}(u_{\mathbf{g}_{i}st}\Omega)^{\perp}(u_{\mathbf{g}_{i}st}u_{\mathbf{g}_{i}sr}^{\ast})+\mathbf{Q}(u_{\mathbf{g}_{i}st}\Omega)\right).
\end{eqnarray*}
Arguing as in the proof of Theorem~\ref{Selflessness1}, we deduce from Lemma~\ref{FreeMultiplicationIdentities}\,(2) that $\|T_{n,\mathcal{F}}^{*}T_{n,\mathcal{F}} - 1\| \le \frac{1}{n}$. Furthermore, by Lemma~\ref{FreeMultiplicationIdentities}\,(3),
\begin{eqnarray*}
 &  & T_{n,\mathcal{F}}^{\ast}aT_{n,\mathcal{F}}\\
 & = & \frac{1}{2n}\sum_{i,j=1}^{n}\left((u_{\mathbf{g}_{i}sr}u_{\mathbf{g}_{i}st}^{\ast})\mathbf{Q}(u_{\mathbf{g}_{i}st}\Omega)+\mathbf{Q}(u_{\mathbf{g}_{i}st}\Omega)^{\perp}(u_{\mathbf{g}_{i}st}u_{\mathbf{g}_{i}sr}^{\ast})\right) \\
 & & \qquad \qquad \times a \left(\mathbf{Q}(u_{\mathbf{g}_{j}st}\Omega)(u_{\mathbf{g}_{j}st}u_{\mathbf{g}_{j}sr}^{\ast})+(u_{\mathbf{g}_{j}sr}u_{\mathbf{g}_{j}st}^{\ast})\mathbf{Q}(u_{\mathbf{g}_{j}st}\Omega)^{\perp}\right)\\
 & = & \frac{1}{2n}\sum_{i,j=1}^{n}\left((u_{\mathbf{g}_{i}sr}u_{\mathbf{g}_{i}st}^{\ast})\mathbf{Q}(u_{\mathbf{g}_{i}st}\Omega)+\mathbf{Q}(u_{\mathbf{g}_{i}st}\Omega)^{\perp}(u_{\mathbf{g}_{i}st}u_{\mathbf{g}_{i}sr}^{\ast})\right) \\
 & & \qquad \qquad \times \left(\mathbf{Q}(u_{\mathbf{g}_{i}}\Omega)a\mathbf{Q}(u_{\mathbf{g}_{j}}\Omega)\right) \\
 & & \qquad \qquad \qquad \times \left(\mathbf{Q}(u_{\mathbf{g}_{j}st}\Omega)(u_{\mathbf{g}_{j}st}u_{\mathbf{g}_{j}sr}^{\ast})+(u_{\mathbf{g}_{j}sr}u_{\mathbf{g}_{j}st}^{\ast})\mathbf{Q}(u_{\mathbf{g}_{j}st}\Omega)^{\perp}\right)\\
 & = & 0
\end{eqnarray*}
for every reduced operator $a \in \mathbf{A}_{\Gamma}$ of type $\mathbf{w} \in \mathcal{F} \setminus \{e\}$. The desired conclusion now follows by the same argument as in the proof of Theorem~\ref{Selflessness1}.
\end{proof}

\begin{example} \label{Example}
Let $\mathrm{C}^\ast(u,v) \cong \mathrm{C}^\ast(\mathbb F_2)$ be the universal $\mathrm{C}^\ast$-algebra generated by two unitaries $u$ and $v$.
Choi proved in \cite[Theorem 7]{Choi} that $\mathrm{C}^\ast(\mathbb F_2)$  admits a countable separating family of
representations $(\pi_n)_{n\in \mathbb{N}_{\geq 1}}$ such that $\pi_n$ has dimension $2n$ for each $n$ (see the proof of \cite[Theorem 7]{Choi}). Set $S=\{1,-1,i,-i\}$, and for each $\eta \in S$ let $\gamma_\eta$ be the automorphism of $\Cs(\mathbb F_2)$ induced by
$\gamma_\eta(u)=u$ and $\gamma_\eta(v)=\eta v$. 
Next, define a tracial state on $\Cs(\mathbb F_2)$ as follows,
\[
\tau_2(x)
   :=\frac{1}{4}\sum_{\eta\in S}\sum_{n\ge1} 2^{-n} \,
       \text{tr}_{2n}\!\left(\pi_n(\gamma_\eta(x))\right)
\]
for all $x\in\Cs(\mathbb F_2)$, where $\text{tr}_{2n}$ denotes the tracial state on $M_{2n}$. Since the family of representations $(\pi_n)_n$ is separating, and each trace $\text{tr}_{2n}$ faithful, it follows that $\tau_2$ is faithful as well. (This is a similar argument as in \cite[Corollary 9]{Choi}.) 
Moreover, $\tau_2(v)=\tau_2(v^2)=0$ as $\sum_{\eta \in S}\eta=\sum_{\eta \in S}\eta^2=0$.
In particular, $(\Cs(\mathbb F_2),\tau_2)$ admits unitaries with  the Avitzour condition and $(\Cs(\mathbb F_2),\tau_2)\star(\Cs(\mathbb F_2),\tau_2)$ is completely selfless by Theorem \ref{Selflessness2}.  

More generally, as Choi remarks in \cite{Choi}, one can define a faithful tracial state on $\mathrm{C}^\ast(\mathbb F_n)$ for all $n\geq2$.
Using the same argument as before, for each $n\geq2$ we find a faithful tracial state $\tau_n$ on $\Cs(\mathbb F_n)$ admitting unitaries as above.
Therefore,  for $m,n\geq2$ we have that the reduced free product $(\mathrm{C}^\ast(\mathbb F_m),\tau_m)\star(\mathrm{C}^\ast(\mathbb F_n),\tau_n)$ is completely selfless. Moreover, if one replaces  $(\Cs(\mathbb F_n),\tau_n)$ by any \Cs-probability space $(A,\omega)$ admitting a unitary in $\ker(\omega)\cap A^{\omega}$, the same conclusion remains valid. 

\end{example}



\section*{Acknowledgments}

F.F.\ gratefully acknowledges support from the Simons Foundation Dissertation Fellowship SFI-MPS-SDF-00015100. He thanks Professors Ben Hayes and Srivatsav Kunnawalkam Elayavalli for their guidance and for all the interesting discussions surrounding the present topic.
M.\'O.C.\ gratefully acknowledges support from the National Science Foundation under Grant No.\ NSF DMS-2144739. He thanks Professor Ben Hayes, his advisor and principal investigator of the grant, for his invaluable guidance and support.
M.P.\ was supported by the Carlsberg Foundation grant CF24-1144. 
The authors would like to thank N.\ Ozawa for helpful comments on the first draft of this article. Furthermore, the authors also wish to thank the anonymous referees for providing insightful comments that led to an improved version of the article.



\end{document}